\tikzset{join/.code=\tikzset{after node path={%
			\ifx\tikzchainprevious\pgfutil@empty\else(\tikzchainprevious)%
			edge[every join]#1(\tikzchaincurrent)\fi}}}
\tikzset{>=stealth',every on chain/.append style={join},
	every join/.style={->}}
\tikzstyle{labeled}=[execute at begin node=$\scriptstyle,
\DeclareFontFamily{U}{mathx}{\hyphenchar\font45}
\DeclareFontShape{U}{mathx}{m}{n}{<-> mathx10}{}
\DeclareSymbolFont{mathx}{U}{mathx}{m}{n}
\DeclareMathAccent{\widebar}{0}{mathx}{"73}
\theoremstyle{theorem}
\newtheorem{remark}{Remark}[section]
\newtheorem{lemma}{Lemma}[section]
\newtheorem{proposition}{Proposition} [section]
\newtheorem{corollary}{Corollary} [section]
\newtheorem{theorem}{Theorem} [section]
\newtheorem{assumption}{Assumption} [section]
\providecommand{\keywords}[1]{{\textit{Keywords: }} #1}
\newcommand{\reals}{\mathbb{R}}
\newcommand{\diff}{{\rm d}}
\newcommand{\convergeL}{\overset{\mathcal{L}}{\longrightarrow}}
\newcommand{\myp}[1]{\left(#1\right)}
\newcommand{\expect}[1]{\mathbb{E}{\left(#1\right)}}
\newcommand{\expectsigma}[1]{\mathbb{E}_{\sigma}{\left(#1\right)}}
\newcommand{\idfun}[1]{\mathbf{1}_{\{#1\}}}
\newcommand{\conexp}[2]{\mathbb{E}\left(#1\left|#2\right.\right)}
\newcommand{\infor}[1]{\mathcal{F}_{#1}}
\newcommand{\abs}[1]{\left|#1\right|}
\newcommand{\var}[1]{\mathbf{Var}{\left(#1\right)}}
\newcommand{\prob}[1]{\mathbb{P}{\left(#1\right)}}
\newcommand{\cov}[1]{\mathbf{Cov}{\left(#1\right)}}
\newcommand{\normdist}[2]{\mathcal{N}\myp{#1,#2}}
\newcommand{\Pconverge}{\overset{\mathbb{P}}{\rightarrow}}
\newcommand{\Pbv}{{\rm PAV}}
\newcommand{\preavg}[1]{\widebar{#1}^{n}_m}
\newcommand{\RV}[1]{\left\langle #1\right\rangle}
\newcommand{\LsConverge}{\overset{\mathcal{L}-s}{\longrightarrow}}
\title{Dependent Microstructure Noise and Integrated Volatility Estimation from High-Frequency Data
%\footnote{We are very grateful
%to Yacine A\"it-Sahalia, Federico Bandi, Peter Boswijk, Peter Reinhard Hansen, Siem Jan Koopman, Oliver Linton, and Xiye Yang
%%and conference and seminar participants at the
%%Conference on Financial Econometrics \& Empirical Asset Pricing in Lancester,
%%the 10th International Conference on Computational and Financial Econometrics in Sevilla,
%%the Tinbergen Institute, and the University of Amsterdam
%for their comments and discussions on earlier versions of this paper.
%%{\tt Matlab} code to implement the estimators developed in this paper
%%is available from the authors upon request.
%This research was funded in part by the Netherlands Organization for Scientific Research under grant NWO VIDI 2009 (Laeven).
%}
}
\author{Z. Merrick Li\thanks{Corresponding author. University of Amsterdam, Amsterdam School of Economics, PO Box 15867, 1001 NJ Amsterdam, The Netherlands. Email: \href{mailto:Z.Merrick.Li@gmail.com}{Z.Merrick.Li@gmail.com}. Phone: +31 (0)20 5254252.}\\
{\small Erasmus University Rotterdam}\\
{\small University of Amsterdam}\\
{\small and Tinbergen Institute}
\and
Roger J. A. Laeven\thanks{University of Amsterdam, Amsterdam School of Economics, PO Box 15867, 1001 NJ Amsterdam,
The Netherlands. Email: \href{mailto:R.J.A.Laeven@uva.nl}{R.J.A.Laeven@uva.nl}. Phone: +31 (0)20 5254219.}\\
{\small Amsterdam School of Economics}\\
{\small University of Amsterdam, EURANDOM}\\
{\small and CentER}
\and
Michel H. Vellekoop\thanks{University of Amsterdam, Amsterdam School of Economics, PO Box 15867, 1001 NJ Amsterdam, The Netherlands.
Email: \href{mailto:M.H.Vellekoop@uva.nl}{M.H.Vellekoop@uva.nl}. Phone: +31 (0)20 5254210.}\\
{\small Amsterdam School of Economics}\\
{\small University of Amsterdam}}
\date{\today }
\begin{document}

\def\spacingset#1{\renewcommand{\baselinestretch}%
	{#1}\small\normalsize} \spacingset{1}

\spacingset{1.45} % DON'T change the spacing!
	
\maketitle

\begin{abstract}
In this paper, we develop econometric tools to analyze the integrated volatility of the efficient price
and the dynamic properties of microstructure noise in high-frequency data
under general dependent noise.
We first develop consistent estimators of the variance and autocovariances of noise using a variant of realized volatility.
Next, we employ these estimators to adapt the pre-averaging method and derive a consistent estimator of the integrated volatility, which converges stably to a mixed Gaussian distribution at the optimal rate $n^{1/4}$.
To refine the finite sample performance, we propose a two-step approach that corrects the finite sample bias,
which turns out to be crucial in applications.
Our extensive simulation studies demonstrate the excellent performance of our two-step estimators.
In an empirical study, %Empirically,
 we characterize the dependence structures of microstructure noise in several popular sampling schemes and provide intuitive economic interpretations; we also illustrate the importance of accounting for both the serial dependence in noise and the finite sample bias
when estimating integrated volatility.
%   study the impacts on integrated volatility estimation.

%	we find strong evidence of positively autocorrelated noise in the examined stocks
%   and we illustrate the considerable importance of taking both the dependence in noise and the interlocked bias
%   into account.
\end{abstract}
\keywords{%Finite sample
%high-frequency data,
Dependent microstructure noise,
realized volatility,
bias correction,
pre-averaging method,
strongly mixing sequences.}

\medskip
	
\noindent \textit{JEL classification}: C13, C14, C55, C58.
	
\newpage

\doublespacing

\section{Introduction}
Over the past decade and a half, high-frequency financial data have become increasingly available.
In tandem, the development of econometric tools to study the dynamic properties of high-frequency data
has become an important subject area %realm
in economics and statistics.
A major challenge is provided by the accumulation of market microstructure noise at higher frequencies,
which can be attributed to various market microstructure effects including, for example, information asymmetries (see~\cite{glosten1985bid}), inventory controls (see~\cite{ho1981optimal}), discreteness of the data (see~\cite{harris1990estimation}), and transaction costs (see~\cite{garman1976market}).

It has been well-established (see, e.g., \cite{black1986noise}) that the observed transaction price\footnote{In this paper, ``price'' always refers to the ``logarithmic price''.} $Y$ can be decomposed into the unobservable ``efficient price'' (or ``frictionless equilibrium price'') $X$ plus a noise component $U$ that captures %various
market microstructure effects.
That is, it is natural to assume that
\begin{equation}
Y_t = X_t+U_t,
\label{eq:Y=X+U}
\end{equation}
where further assumptions on $X$ and $U$ need to be stipulated.
%Therefore, the specification of the dynamic properties of  microstructure noise plays a key role to conduct statistical inference, e.g, the estimation of the integrated volatility of the efficient price process.
While estimating the integrated volatility of the efficient price is the emblematic problem in high-frequency financial econometrics (see, for example,~\cite{Ait-Sahalia&Jacod2014HFE}), the study of microstructure noise, e.g., its magnitude, dynamic properties, etc., is the main focus of the market microstructure literature (see, for example,~\cite{hasbrouck2007empirical}).
A common challenge, however, is that the two components of the observed price $Y$ in \eqref{eq:Y=X+U} are latent.
Therefore, distributional features of one component, say, of %, such as
the microstructure noise,
will affect the estimation of characteristics of the other, such as the integrated volatility of the efficient price.\footnote{Indeed, while high-frequency data in principle facilitate the asymptotic and empirical analysis of volatility estimators,
the pronounced presence of microstructure noise at high frequency subverts the desirable properties of traditional estimators such as realized volatility.}

While the semimartingale framework provides the natural class to model the efficient price (see, e.g.,~\cite{duffie2010dynamic}),
the statistical assumptions on %properties of
noise
induced by microeconomic financial models range from simple to very complex,
depending on which phenomena the model aims to capture.
For example, the classic Roll model (see~\cite{Roll1984simple}) postulates an i.i.d. bid-ask bounce
resulting from uncorrelated order flows;~\cite{hasbrouck1987order},~\cite{choi1988estimation}, and~\cite{stoll1989inferring}
introduce autocorrelated order flows, yielding autoregressive microstructure noise; and~\cite{gross2013predicting} model microstructure noise with long-memory properties.
Therefore, being able to account for the potentially complex statistical behavior of microstructure noise
that contaminates our observations of %
the semimartingale efficient price dynamics,
would be an appealing property of any method that aims at disentangling the efficient price and microstructure noise.
%is to estimate the integrated volatility of the efficient price process $X$.
%While high-frequency data in principle facilitate the asymptotic and empirical analysis of volatility estimators,
%the pronounced presence of microstructure noise at high frequency subverts the desirable properties of traditional estimators such as realized volatility.

%To account for the contamination of the efficient price by market microstructure noise when estimating integrated volatility,
To estimate the integrated volatility of the efficient price, several de-noise methods have been developed, mostly assuming i.i.d. microstructure noise.
Examples include the two-scale and multi-scale realized volatility estimators developed in~\cite{zhang2005TSRV} and~\cite{zhang2006MSRV},
the realized kernel methods developed in~\cite{barndorff2008RealizedKernels},
the likelihood approach initiated by~\cite{ait2005often} and~\cite{xiu2010},
and the pre-averaging method developed in a series of papers by~\cite{podolskij2009pre-averaging-1} and~\cite{jacod2009pre-averaging-2,jacod2010pre-averaging-3}, see also~\cite{podolskij2009bipower}.
The variance of noise is usually obtained as a by-product. %, ~\cite{hautsch2013preaveraging} and~\cite{christensen2014jump-high-fre}.

In this paper, we allow the microstructure noise to be serially dependent in a general setting, nesting many special cases (including independence).
We do not impose any parametric restrictions on the distribution of the noise,
except for some rather general mixing conditions that guarantee the existence of limit distributions, hence our approach is essentially nonparametric.
In this setting, we first derive the stochastic limit of the realized volatility of observed prices after $j$ lags.
%(see~\eqref{eq:Pconverge_jth_RV}) has a stochastic limit $\var{U}-\gamma(j)$ where $\var{U}$ is the variance of noise and $\gamma(j)$ is the $j$-th order %covariance.
%  . We generalize the result by taking the differences after more lags (rather than 1) to compute the realized volatility.  This simple strategy enables us to
Using %Based on
this limit result, we develop consistent estimators of the variance and covariances of noise.
% However, the realized volatility estimators of the second moments of noise are not performing well in finite samples.
The aim of estimating the second moments of noise is twofold.
On the one hand, we would like to explore the dynamic properties of microstructure noise.
In particular, we would like to compare these properties to those induced by various parametric models of microstructure noise
based on leading microstructure theory,
and obtain corresponding economic interpretations to achieve a better understanding of the microstructure effects in high-frequency data.
On the other hand, the second moments of noise become nuisance parameters in estimating the integrated volatility,
which is a prime objective in the analysis of high-frequency financial data.
	
To estimate the integrated volatility, we next adapt the pre-averaging estimator (PAV) to allow for serially dependent noise in our general setting.
We find that the stochastic limit of the adapted PAV estimator is a function of the volatility and the variance and covariances of noise,
and the latter, constituting an \emph{asymptotic bias},
can be consistently estimated by our realized volatility estimator.
Hence, we can correct the asymptotic bias, resulting in centered estimators of the integrated volatility.

A key interest in this paper is to unravel the %important
interplay between asymptotic and finite sample biases when estimating integrated volatility.
In a finite sample analysis, we find that the realized volatility estimator has a finite sample bias that is %a fraction of
proportional to the integrated volatility.
The bias term becomes significant when the number of lags (in computing the variant of realized volatility) is large,
or the noise-to-signal ratio\footnote{The ratio of the variance of noise and the integrated volatility.} is small.
Therefore, we are in a situation in which the integrated volatility generates a \emph{finite sample bias} to the estimators of the second moments of noise,
while the latter become the \emph{asymptotic bias} in estimating the former.
This ``feedback effect'' in the bias corrections motivates us to develop \textit{two-step estimators}.
First, we simply ignore the dependence in noise and proceed with the pre-averaging method
to obtain an estimator of the integrated volatility.
Next, we use this estimator to obtain \emph{finite sample bias} corrected estimators of the second moments of noise,
which can then be used to correct the asymptotic bias yielding the second-step estimator of the integrated volatility.
Repeating this process leads to three-step estimators (and beyond) which may further improve the two-step estimators on average,
but at the cost of higher standard deviations.
Figure~\ref{fig:description_two_step_estimator} gives a simple graphical illustration of the implementation of the two-step estimators.
	%. Subsequently, we obtain the the second step estimator of the integrated volatility after removing the asymptotic bias using the finite sample bias corrected estimates of the second moments of noise.

We conduct extensive Monte Carlo experiments to examine the performance of our estimators,
which proves to be excellent.
We demonstrate in particular that they can accommodate both serially dependent and independent noise
and perform well in finite samples with realistic data frequencies and sample sizes.
The experiments reveal the importance of
a unified treatment of asymptotic and finite sample biases when
%correcting the finite sample bias in
estimating
%the moments of noise.
integrated volatility.
		
Empirically, we apply our new estimators to a sample of Citigroup transaction data.
We find that the associated microstructure noise tends to be positively autocorrelated.
This is in line with earlier %empirical
findings in the microstructure literature, see~\cite{hasbrouck1987order},~\cite{choi1988estimation}, and~\cite{huang1997components}.
Attributing %interpret
this positive autocorrelation to order flow continuation, the estimated probability that a buy (or sell) order follows another buy (or sell) order is 0.87.
Furthermore, microstructure noise turns out to be negatively autocorrelated under tick time sampling.
This is consistent with inventory models, in which dealers alternate quotes to maintain their inventory position.
We obtain an estimate of the probability of reversed orders equal to 0.84.
Turning to the estimators of integrated volatility, we find that with positively autocorrelated noise
the commonly adopted methods %, for example, the pre-averaging method,
that hinge on the i.i.d. assumption of noise
tend to overestimate the integrated volatility.
Under two alternative (sub)sampling schemes --- regular time sampling and tick time sampling --- our estimators also appear to work well.
This testifies %We show in particular
to the critical relevance of the %finite sample
bias corrections embedded in our two-step estimators.
%Thus our empirical investigation offers some insights on the choices of sampling schemes in practice.
%	that noise is close to be independent in this subsample. The estimates of integrated volatility by our 	two-step estimator and the classical pre-averaging estimator with i.i.d. noise are very close. %The robustness to the (mis)specification of noise is a great advantage of our estimators.
	
%\cite{ait2005often} and, especially,~\cite{hansen2006realized} analyze several \emph{continuous-time} specifications
%of noise that allow dependence in calendar time, assess their implications for volatility estimation,
%and conduct an interesting empirical analysis.\footnote{\cite{hansen2006realized} also consider the case of dependence between noise
%and the efficient price, which is excluded in this paper,
%and analyze empirically a specific tick time dependence.}
In earlier literature, \cite{Ait-Sahalia2011DependentNoise} show that the two-scale and multi-scale realized volatility estimators
are robust to exponentially decaying dependent noise. %, characterized by strongly mixing sequences with exponentially decaying dependence.
In this paper, we provide explicit estimators of the second moments of noise
and analyze their asymptotic behavior,
develop bias-corrected estimators of the integrated volatility based on these moments of noise,
and empirically assess the noise characteristics under different sampling schemes.
%Our general setting is essentially non-parametric; see also Remarks \ref{rmk:Sampling_Schemes} and \ref{rmk:q-dependence}
%below for connections between the various noise assumptions.
Furthermore, \cite{hautsch2013preaveraging} study $q$-dependent microstructure noise,
develop consistent estimators of the first $q$ autocovariances of microstructure noise and define %
the associated pre-averaging estimators.
An appealing feature of their approach is that their autocovariance-type estimators of $q$-dependent noise consider non-overlapping increments which avoids finite sample bias.
%To distinguish our paper from the two, we first note that
We allow for more general assumptions on the dependence structure of microstructure noise.
%\footnote{In particular, we allow the mixing sequence to decay at a polynomial rate, which is weaker than an exponential rate.
%The relevance of this generality is that without further specifications, one cannot distinguish the two decaying rates.}.
%%While the generality brings mathematical and econometric challenges %(see also~\cite{jacod2015IVDependentNoise}),
Owing to its generality our setting incorporates many microstructure models as special cases.
%beyond $q$ dependence model or exponentially decaying noise
%This warrants %emphasis
%that
We therefore do not need to %
advocate any particular model of microstructure noise
and this enables us to obtain economic interpretations of our empirical results
under multiple sampling schemes.
%., though our (limited) empirical studies turn out to support weakly dependent noise.
% impose similar assumptions on the structure of the noise as we do
%and develop estimators of integrated volatility based on multiple time scales.

%\footnote{We show that our integrated volatility estimator is also consistent with irregular sampling, see Appendix~\ref{sec:IrregularSampling}.}

In two contemporaneous and independent works,
~\cite{jacod2015IVDependentNoise,jacod2013StatisticalPropertyMMN} also study dependent noise in high-frequency data.
In~\cite{jacod2013StatisticalPropertyMMN}, they develop a novel local averaging method to ``recover'' the noise and they %
can, in principle, estimate any finite (joint) moments of noise with diurnal features.
Moreover, they also allow observation times to be random.
Empirically, they find some interesting statistical properties of noise.
In particular, they find that noise is strongly serially dependent with polynomially decaying autocorrelations.
Employing this local averaging method,~\cite{jacod2015IVDependentNoise} develop an %pre-averaging
estimator of integrated volatility that allows for dependent noise.
To distinguish our work from these two papers, we first note that our assumptions on noise are slightly different:
we assume that the noise process constitutes a strongly mixing sequence while they require a $\rho$-mixing sequence
(see~\cite{bradley2005StrongMixing} for a discussion of mixing sequences).
%Moreover, we obtain consistency theory for the pre-averaging bi-power variation, a more general functional form that also permits finite activity jumps.
Furthermore, the local averaging method differs from, and allows to analyze more general noise characteristics than,
the simpler realized volatility method developed here.
The key difference is our explicit treatment of the feedback effect between the asymptotic and finite sample biases:
we show that in a finite sample, the integrated volatility and second moments of microstructure noise should be estimated in a unified way, since they induce biases in % to
 each other.
We design novel and easily implementable two-step estimators to correct for the intricate biases.
Our two-step estimators of the integrated volatility, which are designed to allow for dependent noise, also perform well
in the special case of independent noise, and in a sample of reasonable size as encountered in practice.
This robustness to (mis)specification of noise and to sampling frequencies is an important advantage of our two-step estimators.
%\footnote{Obviously,
%a polynomially decaying rate for the mixing coefficients as considered in~\cite{jacod2013StatisticalPropertyMMN,jacod2015IVDependentNoise}
%is mathematically more general
%than the exponentially decaying rate considered in this paper.
%Our exponentially decaying rate is motivated in part by %extant
%market microstructure pricing theory; see, for example,~\cite{amihud1987trading}
%for a %simple
%model of price adjustments.
%Moreover, this fast-decaying rate is also supported by our empirical evidence; see Section~\ref{sec:EmpiricalStudy}.
%We suspect that the lack of
Our unified treatment of the asymptotic and finite sample biases
may help explain why %
%in
the empirical studies in~\cite{jacod2013StatisticalPropertyMMN}
render the strong dependence in noise they find (and question themselves); see our
%simulation studies in Section~\ref{sec:simulation} and
empirical analysis in Section~\ref{sec:EmpiricalStudy}.
%}

In another independent paper,~\cite{da2017moving} introduce a novel quasi maximum likelihood approach
to estimate both the volatility and the autocovariances of moving-average microstructure noise.
They also extend their estimators to general settings that allow for irregular observation times,
intraday patterns of noise and jumps in asset prices.
Their approach treats ``large'' and ``small'' microstructure noise in a uniform way which leads to a potential improvement in the convergence rate.
Our approach is essentially of a nonparametric nature
and provides unified estimators of a class of volatility functionals (see Theorem~\ref{thm:consistency}) including the asymptotic variance,
which account for the feedback between finite sample and asymptotic biases. %\footnote{We note that in~\cite{da2017moving}, the asymptotic variance estimators are based on the pre-averaging method.}
Our empirical study also has a different focus.
Our investigation is not as extensive as in~\cite{da2017moving},\footnote{Da and Xiu maintain a website to provide up-to-date daily annualized volatility estimates for all S\&P 1500 index constituents, see~\url{http://dachxiu.chicagobooth.edu/\#risklab}.}
but we explicitly consider different sampling schemes,\footnote{In their empirical studies,~\cite{da2017moving} only consider tick time sampling.}
analyzing the autocovariance patterns of noise in connection to microstructure noise models
and their impact on integrated volatility estimation.
	
The remainder of this paper is organized as follows.
In Section~\ref{sec:framework}, we introduce the basic setting and notation.
In Section~\ref{sec:VarianceCovarianceEst}, we analyze realized volatility with dependent noise
and develop consistent estimators of the second moments of noise.
The pre-averaging method with dependent noise is studied in Section~\ref{sec:pre-averaging}.
Section~\ref{sec:two-step estimators} introduces our two-step estimators. %to correct the finite sample biases.
Section~\ref{sec:simulation} reports extensive simulation studies.
Our empirical study is presented in Section~\ref{sec:EmpiricalStudy}.
Section~\ref{sec:conclusion} concludes the paper.
All proofs and some additional Monte Carlo simulation and empirical results
are collected in %relegated to
an online appendix, see \cite{LLVsupp2018}.
%	
%	\begin{figure}[h]
%		\centering
%		\input{ACFCiti_Tick_Second_Ret.tex}
%		\caption{Autocorrelation function of Citi log-return for January, 2011. In the top panel, the data is sampled at 1 second time scale; in the bottom panel, the data is sampled at tick time, i.e., time when price changes.}
%		\label{fig:ACFGE0401}
%	\end{figure}
%	

\section{Framework and Assumptions}\label{sec:framework}
We assume that the efficient log-price process $X$ is represented by a continuous It\^o semimartingale
defined on a filtered probability space $(\Omega,\mathcal{F},(\mathcal{F}_t)_{t\geq 0},\mathbb{P})$:
\begin{equation}
X_t = X_0 + \int_{0}^{t}a_s\diff s + \int_{0}^{t}\sigma_s\diff W_s,
\label{eq:Eff_Price_Ito_Diffu}
\end{equation}
where $W$ is a standard Brownian motion, the drift process $a_s$ is optional
and %
locally bounded,
and the volatility process $\sigma_s$ is adapted with c\`adl\`ag paths.
The probability space also supports the noise process $U$.
We assume that all observations are collected in the fixed time interval $[0,T]$,
where without losing generality we let $T=1$.
At stage $n$, the observation times are given by $0=t^n_0<t^n_1<\dots<t^n_n=1$.

%	We also assume regular discretization scheme $t^n_i = i/n, i=0,...,n$. For any process $V$, we let $V^n_i = V_{t^n_i}$.

\begin{assumption}[Market microstructure noise]\label{assumption:dependent_noise}
The noise process $(U_i)_{i\in\mathbb{N}}$ satisfies the following assumptions:
	\begin{enumerate}
		\item $U$ is symmetrically distributed around 0;
		\item The noise process $U$ is independent of the efficient log-price process $X$; %defined in~\eqref{eq:Eff_Price_Ito_Diffu};
		\item \label{assumption:noiseAssump3}$U$ is stationary and strongly mixing and the mixing coefficients\footnote{The \emph{mixing coefficients} constitute a sequence satisfying
			\[
			\abs{\prob{A\cap B}-\prob{A}\prob{B}}\leq \alpha_h,
			\]
            for all $A\in\sigma\myp{U_0,\dots,U_k},B\in\sigma\myp{U_{k+h},U_{k+h+1},\dots}$, where $\sigma(A)$ is the $\sigma$-algebra generated by $A$.
            We refer to~\cite{bradley2007strongMixing} or Chapter VIII of~\cite{jacod1987limit}
            for further details on and properties of mixing sequences.} $\{\alpha_h\}_{h=1}^\infty$ decay at a polynomial rate,
            i.e., there exist some constants $C>0,v>0$ such that
        \begin{equation}\label{eq:alpha_mixing_coeff_v}
        \alpha_h \leq\frac{C}{h^v}.
        \end{equation}Moreover, we assume $U$ has bounded moments of all orders.
	\end{enumerate}
\end{assumption}
 %\footnote{This notation indicates that the distribution of noise is independent of the number of observations $n$. This is consistent with the general principle in high-frequency analysis that the number of observations (or data frequency) does not alter the distribution of noise. A natural interpretation is to consider another probability space $(\Omega',\mathcal{F}',(\mathcal{F}'_i)_{i\in\mathbb{N}},\mathbb{P})$ that supports the noise process $(U_i)_{i\in\mathbb{N}}$. $\mathbb{N}$ can be replaced by any countable set corresponding to the transaction times thus could be irrelevant to the calender time. In that case, the probability space $(\Omega,\mathcal{F}, (\mathcal{F}_{t^n_i})_{0\leq i\leq n}, \mathbb{P})$ after any realization of $n$ observations will the probability extension of $(\Omega',\mathcal{F}',(\mathcal{F}'_i)_{i\in\mathbb{N}},\mathbb{P})$ and the probability space that supports the efficient price. But to keep notation consistent, we do not make this extension. One is referred to~\cite{ait2014high} and~\cite{jacod2013StatisticalPropertyMMN} for a further discussion about the compatibility of colored noise. }.

%\begin{remark}[Mixing conditions]\label{rmk:rho_strong_mixing}
\noindent The mixing conditions in Assumption~\ref{assumption:dependent_noise} item (3.)
ensure that the noise process evaluated at different time instances, say, $i,i+h$,
is increasingly limited in dependence when the lag $h$ increases.
In particular, there exists some $C'>0$ such that
\begin{equation}
\abs{\gamma(h)}\leq \frac{C'}{h^{v/2}},
\label{eq:rho_strong_mixing}
\end{equation}
where $\gamma(h) = \cov{U_{i}, U_{i+h}}$ is the autocovariance function of $U$.
Assuming $U$ to have bounded moments of all orders is not strictly necessary.
Depending on the targeted moments, this assumption can be relaxed via the choice of $v$ in~\eqref{eq:alpha_mixing_coeff_v}, see Lemma VIII 3.102 in~\cite{jacod1987limit}.
Throughout the paper we maintain the assumption of bounded moments of all orders
and only specify the %respective
restrictions on $v$.
%\end{remark}

At stage $n$, we will denote $U_i$ by $U^n_i$, $\forall i\leq n$.
The $i$-th observed price is thus given by
\begin{equation}\label{eq:Y^n=X^n+U^n}
Y^n_i = X^n_i + U^n_i,
\end{equation}where $X^n_i = X_{t^n_i}$.
In the remainder of the main text,
we assume $t^n_i = i/n, i=0,\dots, n$; see Appendix~\ref{sec:IrregularSampling} for an analysis of irregular sampling schemes.

\begin{remark}[Microstructure noise and sampling schemes]
\label{rmk:Sampling_Schemes}
We allow the noise process $U$ to generate dependencies in \emph{sampling time},
including \emph{transaction time},\footnote{Under this sampling scheme, $Y^n_i$ (resp. $X^n_i,U^n_i$) is the observed log-price (resp. efficient log-price, microstructure noise) associated with the $i$-th trade.
The observation times $(t_{i}^{n})_{0\leq i\leq n}$ can, in general, be deterministic or random, and regular or irregular.
%Throughout the main text we assume $t_{i}^{n}=i/n$ and we refer to Appendix~\ref{sec:IrregularSampling} for the deterministic irregular case.
}
\emph{calendar time},\footnote{Under this sampling scheme, $Y^n_i$ (resp. $X^n_i,U^n_i$) is the observed log-price (resp. efficient log-price, microstructure noise) at regular time $i\Delta_n$, with $\Delta_n = 1/n$ in the main text.}
and \emph{tick time}.\footnote{Tick time sampling removes all zero returns; see~\cite{Ait-Sahalia2011DependentNoise} and~\cite{griffin2008sampling}.
Hence, $Y^n_i$ is by definition different from $Y^n_{i-1}$ and $Y^n_{i+1}$ under this sampling scheme.}
Hence, our noise process essentially constitutes a \emph{discrete-time model} --- it does not depend explicitly on the time %in
between successive observations. ~\cite{ait2005often},~\cite{hansen2006realized}, and ~\cite{hansen2008moving} study various \emph{continuous-time} models of dependent microstructure noise.
In these continuous-time models, the noise component of a log-return over a time interval $\Delta$ is of order $O_p(\sqrt{\Delta})$,
the same order as the
logarithmic return of the %
efficient price.
%\footnote{~\cite{hansen2006realized} also consider a specific tick time dependence.}
%Thus it is not a proper model for various market microstructure effects, for instance, the bid-ask spread.
%which leads to different asymptotic properties. Calender-time dependent noise has much smaller stochastic orders,\footnote{Under regularity conditions, the variance of the noise component in the log-returns will be of the same order as the variance of the efficient prices. The intuition is that as the data frequency becomes higher, the calender-dependent noise terms in successive observed prices are closely correlated, hence their difference has much less variation.} hence is less appropriate to fit some empirical facts like the volatility smile.
\end{remark}
%It is well known that the sample mean of i.i.d. random variables converges to a Gaussian distribution under some regularity conditions. The following lemma generalizes this result --- asymptotic normality is still attainable on strongly mixing sequence. Later, we will use this lemma to derive the asymptotic distribution of the pre-averaging noise, see Proposition~\ref{prop:pre-averaged noise}.

%This is valid since the process is stationary.

\begin{remark}[General dynamic properties of microstructure noise]
\label{rmk:q-dependence}
Our assumptions on the dependence of noise are quite general, nesting many models as special cases including, for example,
i.i.d. noise, $q$-dependent noise (under which $\gamma(h) = 0,\:\forall h>q$), ARMA($p,q$) noise (see~\cite{mixingARMA}) and some long-memory processes (see~\cite{tsay2005analysis}).
We note that AR(1) and AR(2) noise are studied in~\cite{barndorff2008RealizedKernels} and~\cite{hendershott2013implementation} respectively, $q$-dependent noise is considered by~\cite{hansen2008moving} and~\cite{hautsch2013preaveraging}, while~\cite{gross2013predicting} study long-memory bid-ask spreads.
\end{remark}

\section{Estimation of the Variance and Covariances of Noise}\label{sec:VarianceCovarianceEst}
In this section, we develop consistent estimators of the second moments of noise under Assumption~\ref{assumption:dependent_noise}.
These estimators will later serve as important inputs to adapt the pre-averaging method.
We also analyze our estimators' finite sample properties.
% From here onwards, for any process $V$, we denote $V^n_i = V_{{{i}/{n}}}$.

\subsection{Realized volatility with dependent noise}
We start with the following preliminary result:
\begin{proposition}
\label{prop:RV_Estimate_var+cov(1)}
Assume that the efficient log-price follows~\eqref{eq:Eff_Price_Ito_Diffu},
the observations follow~\eqref{eq:Y^n=X^n+U^n},
and the noise process satisfies Assumption~\ref{assumption:dependent_noise}.
Furthermore, let $j$ be a fixed integer and assume the sequence $j_n$ and the exponent $v$ satisfy the following conditions:
\begin{equation}\label{eq:Asy_condi_RV_consistency}
v>2, \quad j_n\rightarrow\infty,\quad  j_n/n\rightarrow 0.
\end{equation}
%Furthermore, assume that there exists an $\epsilon>0$ such that $\expect{\abs{U}^{4+\epsilon}}<\infty.$
Then we have the following convergences in probability as $n\rightarrow\infty$:
\begin{equation}\label{eq:Pconverge_jth_RV}
\widehat{\RV{Y,Y}}_{n}(j) :=\frac{\sum_{i=0}^{n-j} (Y^n_{i+j}-Y^n_{i})^2}{2(n-j+1)} \Pconverge \var{U} - \gamma(j),
\end{equation}
\begin{equation}\label{eq:consistent_estimate_var_U_dependent}
\widehat{\var{U}}_n:=\frac{\sum_{i=0}^{n-j_n} (Y^n_{i+j_n}-Y^n_{i})^2}{2(n-j_n+1)}\Pconverge \var{U},
\end{equation}
\begin{equation}
\widehat{\gamma(j)}_n := \widehat{\var{U}}_n - \widehat{\RV{Y,Y}}_{n}(j)\Pconverge \gamma(j).
\label{eq:gamma(j)_hat}
\end{equation}
%where $Y^n_k = Y_{t^n_k}$.
\end{proposition}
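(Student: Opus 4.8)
The plan is to expand each squared increment of $Y$ into an efficient-price part, a cross part, and a noise part, and to analyze the three resulting averages separately. Since~\eqref{eq:Pconverge_jth_RV} and~\eqref{eq:consistent_estimate_var_U_dependent} are the \emph{same} statistic evaluated at the fixed lag $j$ and at the diverging lag $j_n$ respectively, I would handle both at once by writing $k$ for the lag in use, and~\eqref{eq:gamma(j)_hat} then follows immediately by subtraction and the continuous mapping theorem. Using $Y^n_i=X^n_i+U^n_i$, decompose
\[
\frac{\sum_{i=0}^{n-k}\myp{Y^n_{i+k}-Y^n_i}^2}{2(n-k+1)} = A_n(k)+B_n(k)+C_n(k),
\]
where $A_n(k)$, $B_n(k)$, $C_n(k)$ are the averages over $i\in\{0,\dots,n-k\}$ of $\myp{X^n_{i+k}-X^n_i}^2$, of $2\myp{X^n_{i+k}-X^n_i}\myp{U^n_{i+k}-U^n_i}$, and of $\myp{U^n_{i+k}-U^n_i}^2$, each divided by the common normalization $2(n-k+1)$.

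For $A_n(k)$ I would use that $X$ is a continuous It\^o semimartingale with locally bounded drift and c\`adl\`ag volatility: a routine localization together with the Burkholder--Davis--Gundy inequality gives $\cexp{\myp{X^n_{i+k}-X^n_i}^2}{\infor{t^n_i}}\leq Ck/n$, so $\sum_{i=0}^{n-k}\myp{X^n_{i+k}-X^n_i}^2=O_p(k)$ and hence $A_n(k)=O_p(k/n)\Pconverge 0$, since $k/n\to 0$ in both regimes. For $B_n(k)$ I would condition on the whole path of $X$; because $U$ is independent of $X$ and centered, $\expect{B_n(k)\mid X}=0$, while $\var{B_n(k)\mid X}$ is a double sum over $i,i'$ of $\myp{X^n_{i+k}-X^n_i}\myp{X^n_{i'+k}-X^n_{i'}}$ times $\cov{U^n_{i+k}-U^n_i,\,U^n_{i'+k}-U^n_{i'}}$. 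Denoting the absolute value of this covariance by $\kappa(i,i')$, it is a finite combination of $\abs{\gamma}$ at lags $\abs{i-i'}$ and $\abs{i-i'}\pm k$, so $\sum_{i'}\kappa(i,i')\leq 4\sum_{h\geq 0}\abs{\gamma(h)}<\infty$, finiteness holding \emph{precisely because} $v>2$ makes $\abs{\gamma(h)}\leq C'/h^{v/2}$ summable by~\eqref{eq:rho_strong_mixing}. Applying $\abs{ab}\kappa\leq\tfrac{\kappa}{2}(a^2+b^2)$ and collapsing one sum using the symmetry of $\kappa$, $\var{B_n(k)\mid X}\leq \tfrac{C}{(n-k+1)^2}\sum_{i}\myp{X^n_{i+k}-X^n_i}^2=O_p(k/n^2)\to 0$, hence $B_n(k)\Pconverge 0$ by conditional Chebyshev.

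The term $C_n(k)$ is the heart of the matter. Stationarity and mean zero give $\expect{\myp{U_{i+k}-U_i}^2}=2\var{U}-2\gamma(k)$, so $\expect{C_n(k)}=\var{U}-\gamma(k)$; this equals $\var{U}-\gamma(j)$ when $k=j$ and converges to $\var{U}$ when $k=j_n\to\infty$, because $\abs{\gamma(j_n)}\leq C'/j_n^{v/2}\to 0$. It remains to show $C_n(k)-\expect{C_n(k)}\Pconverge 0$, which I would do through an $L^2$ bound rather than an ergodic theorem, since for $k=j_n$ the summands $V^n_i:=\myp{U^n_{i+k}-U^n_i}^2$ form a genuine triangular array. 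Bounding $\var{C_n(k)}\leq\tfrac{1}{4(n-k+1)^2}\sum_{i,i'}\abs{\cov{V^n_i,V^n_{i'}}}$, I would split the inner sum into overlapping pairs $\abs{i-i'}\leq k$, where $\abs{\cov{V^n_i,V^n_{i'}}}$ is uniformly bounded because $U$ has bounded moments of all orders, and non-overlapping pairs, where $V^n_i$ and $V^n_{i'}$ are measurable with respect to $\sigma$-algebras separated by $\abs{i-i'}-k$ lags, so a Davydov-type covariance inequality (Lemma VIII.3.102 in~\cite{jacod1987limit}) yields $\abs{\cov{V^n_i,V^n_{i'}}}\leq C\,\alpha_{\abs{i-i'}-k}^{1-2/p}\norm{V^n_0}_p^2$, which is summable over the gap once $p$ is taken large enough that $v(1-2/p)>1$ (possible since $v>2$). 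Counting pairs gives $\sum_{i'}\abs{\cov{V^n_i,V^n_{i'}}}\leq C(2k+1)+C''$, hence $\var{C_n(k)}=O(k/n)\to 0$ in both regimes, and $C_n(k)\Pconverge\expect{C_n(k)}$ by Chebyshev.

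Putting the pieces together, $\widehat{\RV{Y,Y}}_n(j)=A_n(j)+B_n(j)+C_n(j)\Pconverge\var{U}-\gamma(j)$, which is~\eqref{eq:Pconverge_jth_RV}; the same decomposition with $k=j_n$ gives $\widehat{\var{U}}_n\Pconverge\var{U}$, i.e.~\eqref{eq:consistent_estimate_var_U_dependent}; and $\widehat{\gamma(j)}_n=\widehat{\var{U}}_n-\widehat{\RV{Y,Y}}_n(j)\Pconverge\var{U}-\myp{\var{U}-\gamma(j)}=\gamma(j)$, which is~\eqref{eq:gamma(j)_hat}. I expect the main obstacle to be the variance control of $C_n(k)$ at the diverging lag $j_n$: the triangular-array structure rules out off-the-shelf ergodic theorems, so one must combine the mixing covariance inequality with a careful overlapping/non-overlapping count; the only other place where a quantitative assumption bites is the $B_n(k)$ bound, which needs summability of $\gamma$, i.e.\ $v>2$.
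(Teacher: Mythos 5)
Your proposal is correct and follows essentially the same route as the paper's own proof: the identical three-term decomposition of the squared increments, with the efficient-price and cross terms shown to be $O_p(k/n)$-negligible and the noise quadratic term controlled through a strong-mixing covariance inequality (Lemma VIII 3.102 of Jacod--Shiryaev / Davydov), giving the same $O(k/n)$ variance bound at both the fixed lag $j$ and the diverging lag $j_n$, after which \eqref{eq:gamma(j)_hat} follows by subtraction. The only differences are cosmetic -- you condition on the path of $X$ and use summability of $\gamma$ for the cross term, and the Davydov form with large $p$ for the squared-noise covariances, where the paper bounds unconditional second moments via conditional expectations -- so nothing needs to change.
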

\begin{proof}
See Appendix~\ref{appendix:prop_RV_Estimate_var+cov(1)}.
\end{proof}
	
%	\begin{remark}
%		This result is motivated by the studies of realized volatility (when $j=1$) in~\cite{Ait-Sahalia2011DependentNoise} with similar dependent noise as ours (strongly mixing with an exponential decaying mixing rate). They show that the realized volatility --- a consistent estimator of the variance of noise when noise is i.i.d. --- becomes a consistent estimator of  $\var{U}-\gamma(1)$. We notice in an updated version of~\cite{jacod2013StatisticalPropertyMMN}, they also report this result under different settings of noise  ($\alpha$-mixing and polynomially decaying rate) without proofs. They are interested in estimating the moments $\var{U}-\gamma(j)$ while we are interested in estimating the variances and covariances separately.
%	\end{remark}
%	
%Thus, $\widehat{\RV{Y,Y}}_n(j)$ provides a consistent estimator of $\var{U}-\gamma(j)$.
The special case of~\eqref{eq:Pconverge_jth_RV} that occurs when $j=1$ appears in~\cite{Ait-Sahalia2011DependentNoise}
assuming exponential decay.
We also note that in the most recent version of~\cite{jacod2013StatisticalPropertyMMN}
similar estimators as $\widehat{\RV{Y,Y}}_{n}(j) $ are mentioned but without formal analysis of their limiting behavior.
%Proposition \ref{prop:RV_Estimate_var+cov(1)}, however, only provides a consistent estimator of
%the \emph{differences} between the variance and the covariances of noise and,
%as we will see later, we also need to estimate $\var{U}$ and $\gamma(j)$ \emph{separately}.
To our best knowledge, our paper is the first to estimate the variance and covariances of noise
using realized volatility under a general dependent noise setting.

\subsection{Finite sample bias correction}\label{subsec:Finite_Sample_Bias_Correction}
The theoretical validity of our realized volatility estimators in~\eqref{eq:Pconverge_jth_RV}--\eqref{eq:gamma(j)_hat}
hinges on the increasing availability of observations in a fixed time interval, the so-called \emph{infill asymptotics}.
In general, an estimator derived from asymptotic results can, however, behave very differently in finite samples.
Our realized volatility estimators of the second moments of noise are an example for which
the asymptotic theory provides a poor representation of the estimators' finite sample behavior.\footnote{This applies to
the local averaging estimators developed in~\cite{jacod2013StatisticalPropertyMMN} as well; see Footnote \ref{fn:lafs} for further details.}
	
Intuitively, the finite sample bias stems from the diffusion component,
when computing the realized volatility $\widehat{\RV{Y,Y}}_n(j)$ over large lags $j$ in a finite sample,
and we will explain later (e.g., in Remark \ref{rmk:why_correct_bias}) why it is critically relevant to account for it in real applications.
In the sequel, we assume the drift $a_t$ in~\eqref{eq:Eff_Price_Ito_Diffu} to be zero.
According to, for example,~\cite{bandi2008microstructure} and~\cite{lee2012jumps} this is not restrictive in high-frequency analysis.
This will be confirmed in our Monte Carlo simulation studies in Section \ref{sec:simulation} and Appendix \ref{sec:SVsimu}.
%%
%Since we are taking all $j$ lags to compute $[Y,Y]^j_n$,  some starting and ending observations will contribute less to $[Y,Y]^j_n$.
%We will impose some regularity conditions on the volatility path around 0 and 1 so that we still get a good approximation of the integrated volatility $\int_{0}^{1}\sigma^2_s\diff s$, after neglecting some observations around 0 and 1.\footnote{}
	
\begin{proposition}
\label{prop:Finite_Sample_Bias_Correction}
Assume that the efficient log-price follows~\eqref{eq:Eff_Price_Ito_Diffu} with $a_{s} = 0\:\forall s$,  and assume there is some $\delta>0$ so that $\sigma_t$ is bounded for all $t\in[0,\delta]\cup [1-\delta,1]$.
Furthermore, assume the observations follow~\eqref{eq:Y^n=X^n+U^n}, and the noise process satisfies Assumption~\ref{assumption:dependent_noise}. Then, conditional on the volatility path,
\begin{align}
\expectsigma{\widehat{\RV{Y,Y}}_n(j)} = \frac{j\int_{0}^{1}\sigma^2_t\diff t}{2(n-j+1)} +\var{U} - \gamma(j) + O_p\myp{j^2/n^2}.
\label{eq:Finite_Sample_Bias_Correction}		
\end{align}
Here, $\expectsigma{\cdot}$ is the expectation conditional on the entire path of volatility.
\end{proposition}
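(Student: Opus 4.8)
\medskip
\noindent\textbf{Proof proposal.}
The plan is to expand the squared lagged increments $\myp{Y^n_{i+j}-Y^n_i}^2$, apply $\expectsigma{\cdot}$ term by term, and then reduce the diffusive contribution to an elementary combinatorial sum. Writing $Y^n_{i+j}-Y^n_i=\Delta^X_i+\Delta^U_i$ with $\Delta^X_i:=X^n_{i+j}-X^n_i=\int_{i/n}^{(i+j)/n}\sigma_s\diff W_s$ (using $a_s\equiv 0$) and $\Delta^U_i:=U^n_{i+j}-U^n_i$, one would get
\begin{equation*}
\expectsigma{\widehat{\RV{Y,Y}}_n(j)}=\frac{1}{2(n-j+1)}\sum_{i=0}^{n-j}\mysb{\expectsigma{\myp{\Delta^X_i}^2}+2\expectsigma{\Delta^X_i\Delta^U_i}+\expectsigma{\myp{\Delta^U_i}^2}}.
\end{equation*}
For the pure-noise term, Assumption~\ref{assumption:dependent_noise} makes $U$ independent of $X$ (hence of the volatility path) and stationary, so $\expectsigma{\myp{\Delta^U_i}^2}=\mathbb{E}\mysb{\myp{U^n_{i+j}-U^n_i}^2}=2\myp{\var{U}-\gamma(j)}$ for every $i$; summing and normalizing, this contributes exactly $\var{U}-\gamma(j)$ with no remainder. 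For the cross term, the independence of $U$ from $X$ and from the volatility path, together with $\mathbb{E}[U^n_i]=0$ (the noise being symmetric around $0$ with finite moments), give $\expectsigma{\Delta^X_i\Delta^U_i}=\expectsigma{\Delta^X_i}\,\mathbb{E}[\Delta^U_i]=0$.

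Next we would deal with the diffusive term: since $a_s\equiv 0$, $\Delta^X_i=\int_{i/n}^{(i+j)/n}\sigma_s\diff W_s$ is a martingale increment with (conditional) quadratic variation $\int_{i/n}^{(i+j)/n}\sigma^2_s\diff s$, so that, conditionally on the volatility path, the It\^o isometry yields $\expectsigma{\myp{\Delta^X_i}^2}=\int_{i/n}^{(i+j)/n}\sigma^2_s\diff s$. It then remains to show that
\begin{equation*}
S_n:=\sum_{i=0}^{n-j}\int_{i/n}^{(i+j)/n}\sigma^2_s\diff s=j\int_0^1\sigma^2_t\diff t+O_p\myp{j^2/n},
\end{equation*}
since dividing $S_n$ by $2(n-j+1)$ then produces the claimed leading term plus an $O_p\myp{j^2/n^2}$ remainder, which combined with the noise term is precisely~\eqref{eq:Finite_Sample_Bias_Correction}.

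The heart of the argument is this last display, which is a purely combinatorial computation. Set $c_k:=\int_{k/n}^{(k+1)/n}\sigma^2_s\diff s$ for $0\le k\le n-1$; then $\int_{i/n}^{(i+j)/n}\sigma^2_s\diff s=\sum_{k=i}^{i+j-1}c_k$, so interchanging the summations gives $S_n=\sum_{k=0}^{n-1}m_k\,c_k$ with $m_k:=\#\{0\le i\le n-j : i\le k\le i+j-1\}$. For $n$ large enough that $2j\le n$, a direct count yields $m_k=j$ for $j-1\le k\le n-j$, $m_k=k+1$ for $0\le k\le j-2$, and $m_k=n-k$ for $n-j+1\le k\le n-1$; hence $S_n=j\sum_{k=0}^{n-1}c_k-R_n=j\int_0^1\sigma^2_t\diff t-R_n$, where $R_n:=\sum_{k=0}^{j-2}(j-1-k)\,c_k+\sum_{k=n-j+1}^{n-1}(k+j-n)\,c_k$. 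Taking $n$ also large enough that $j/n\le\delta$, every interval $[k/n,(k+1)/n]$ entering $R_n$ lies inside $[0,\delta]\cup[1-\delta,1]$, where the hypothesis gives $\sigma^2_t\le K$ for an a.s.\ finite, path-dependent constant $K$; thus $c_k\le K/n$ there and $\abs{R_n}\le\frac{2K}{n}\sum_{m=1}^{j-1}m\le\frac{Kj^2}{n}=O_p\myp{j^2/n}$, as needed.

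\textbf{The main difficulty} will not be any single hard estimate but two bookkeeping points: (i) getting the boundary multiplicities $m_k$ near $0$ and near $1$ right and then checking that the leftover $R_n$ is controllable using \emph{only} the boundedness of $\sigma$ near the two endpoints of $[0,1]$ --- which is precisely why the statement hypothesizes boundedness on $[0,\delta]\cup[1-\delta,1]$ rather than on the whole interval; and (ii) the conditional identity $\expectsigma{\myp{\Delta^X_i}^2}=\int_{i/n}^{(i+j)/n}\sigma^2_s\diff s$, i.e.\ the standard ``conditional-on-volatility'' computation, which is immediate in the absence of leverage and otherwise follows from the martingale property of $X^2-\langle X\rangle$ together with the volatility-path measurability of $\langle X\rangle$.
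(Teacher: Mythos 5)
Your proposal is correct and follows essentially the same route as the paper's proof: decompose the lagged increment into diffusive, cross, and noise parts, use independence and stationarity to get $\var{U}-\gamma(j)$ exactly, apply the conditional It\^o isometry to the diffusive part, and control the endpoint deficit of order $O_p(j^2/n)$ via the boundedness of $\sigma$ on $[0,\delta]\cup[1-\delta,1]$. The only difference is cosmetic bookkeeping --- you count the multiplicities $m_k$ of the subintervals directly, whereas the paper telescopes the integrals along residue classes modulo $j$ --- and both yield the same remainder.
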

\begin{proof}
See Appendix~\ref{sec:prop:Finite_Sample_Bias_Correction}.
\end{proof}
\begin{remark}
The regularity conditions with respect to $\sigma_{t}$ in Proposition \ref{prop:Finite_Sample_Bias_Correction}
trivially hold if the volatility is assumed to be continuous.
(Volatility is usually assumed to be continuous when making finite sample bias corrections.)
%see, e.g.,~\cite{Ait-Sahalia2011DependentNoise} and~\cite{jacod2013StatisticalPropertyMMN}.
\end{remark}
\begin{remark}
Let $j=1$ and let us restrict attention to sampling in calendar time.
In that special case the result in Proposition \ref{prop:Finite_Sample_Bias_Correction} bears similarities with Theorem 1 in~\cite{hansen2006realized}.
Contrary to~\cite{hansen2006realized} we assume that the efficient log-price $X$ is independent of the noise $U$.
Therefore, any correlations between the two drop out.
%Moreover, we assume transaction-time dependent noise while~\cite{hansen2006realized} work with calender-time dependent noise. 	
\end{remark}
	
Proposition \ref{prop:Finite_Sample_Bias_Correction} reveals that $\widehat{\RV{Y,Y}}_n(j) - \frac{j\int_{0}^{1}\sigma^2_t\diff t}{2(n-j+1)}$
will be a better estimator of $\var{U} - \gamma(j)$ in finite samples, and it motivates the following finite sample bias corrected estimators:
\begin{align}\label{eq:RV_SSBC}
	\widehat{\RV{Y,Y}}^{\rm (adj)}_{n}(j) & := \widehat{\RV{Y,Y}}_n(j) -  \frac{\hat{\sigma}^2 j}{2(n-j+1)};\\
	\label{eq:var_noise_dependent_SSBC}
	\widehat{\var{U}}^{\rm (adj)}_n  & := \widehat{\var{U}}_n -  \frac{\hat{\sigma}^2 j_n}{2(n-j_n+1)};\\
	\label{eq:covs_dependent_SSBC}
	\widehat{\gamma(j)}_n^{\rm (adj)}  & := \widehat{\var{U}}_n^{\rm (adj)} - \widehat{\RV{Y,Y}}^{\rm (adj)}_{n}(j);
	\end{align}
where $\hat{\sigma}^2$ is an estimator of $\int_{0}^{1}\sigma^2_s\diff s$.
We note that the bias corrected estimators are still consistent,
as the fraction $\frac{j}{n-j+1}$ is negligible when $j$ is much smaller than $n$.%,i.e., when $j/n=o(1)$.
	
\begin{remark}[Why the finite sample bias matters]
\label{rmk:why_correct_bias}
We now explain why the finite sample bias correction is crucial in applications.
We first rewrite~\eqref{eq:Finite_Sample_Bias_Correction}:
\begin{equation}
\begin{split}
\expectsigma{\widehat{\RV{Y,Y}}_n(j)}& = \frac{j\int_{0}^{1}\sigma^2_t\diff t}{2(n-j+1)} +\var{U} - \gamma(j) + O_p\myp{j^2/n^2} \\
 &= \myp{\var{U}-\gamma(j)} \myp{1+\frac{\frac{j}{2(n-j+1)}}{\frac{\var{U} - \gamma(j)}{\int_{0}^{1}\sigma^2_t\diff t}} }+ O_p\myp{j^2/n^2}.
\label{eq:why_correct_bias}
\end{split}
\end{equation}
Observe that the finite sample bias is determined by the ratio of
the two terms $\frac{j}{2(n-j+1)}$ and $\frac{\var{U} - \gamma(j)}{\int_{0}^{1}\sigma^2_t\diff t}$.
The first term, $\frac{j}{2(n-j+1)}$, depends on the data frequency $(n)$ and ``target parameters'' $(j)$;
the second term, $\frac{\var{U} - \gamma(j)}{\int_{0}^{1}\sigma^2_t\diff t}$, is %related to
the (latent) noise-to-signal ratio.
If the second term is ``relatively larger (smaller)'' than the first one,
then the finite sample bias will be small (large).
In other words, the finite sample bias is not only determined by the data frequency and target parameters,
but also by other properties of the underlying efficient price and noise processes.
	
In high-frequency financial data, the noise-to-signal ratio $\frac{\var{U}}{\int_{0}^{1}\sigma^2_t\diff t}$ is typically small,
but it can vary from $O(10^{-2})$ (see~\cite{bandi2006separating}) to $O(10^{-6})$ (see~\cite{christensen2014jump-high-fre}) in empirical studies.
The ratio $\frac{ j}{2(n-j+1)}$, while typically small as well, can still be \emph{relatively} large,
depending on the specific situation.
Consider the following two scenarios:
\begin{enumerate}[1)]
\item We have ultra high-frequency data with $n=O(10^5)$ (recall that the number of seconds in a business day is 23,400),
and we select $j_{n}=20$.
Then, the ratio $\frac{j_n}{2(n-j_n+1)} = O(10^{-4})$.
\item We have i.i.d. noise and we would like to estimate the variance of noise by $\widehat{\RV{Y,Y}}_{n}(1)$
using high-frequency data with average duration of 20 seconds (thus $n\approx 10^3$);
see, e.g.,~\cite{bandi2006separating}.
Hence, $\frac{j}{2(n-j+1)} = O(10^{-3})$.
\end{enumerate}
In both scenarios, the ratio of $\frac{j}{2(n-j+1)}$ and $\frac{\var{U} - \gamma(j)}{\int_{0}^{1}\sigma^2_t\diff t}$ can vary widely,
depending on the magnitude of the latent noise-to-signal ratio.
It is then clear from the first line of~\eqref{eq:why_correct_bias} that the finite sample bias term,
which is proportional to %a fraction of
the integrated volatility,
may well wipe out the variance of noise, depending on the specific situation.
%Therefore to estimate noise related parameters, we should be serious about the finite sample bias, which is not only related to the data frequency but also to the noise to signal ratio.
\end{remark}
\begin{remark}
Note that increasing the sample size by extending the time horizon %, say,
to $[0,T]$ with large $T$ will not remove the finite sample bias.
%To see this, let $n_T$ be the number of observations in $[0,T]$.
%Then,
%\begin{equation*}
%\begin{split}
%		\myp{\frac{j}{2(n_T-j+1)} + \frac{\var{U} - \gamma(j)}{\int_{0}^{T}\sigma^2_t\diff t}}\int_{0}^{T}\sigma^2_t\diff t %&\approx \myp{\frac{j}{2(nT-j+1)} + \frac{\var{U} - \gamma(j)}{T\int_{0}^{1}\sigma^2_t\diff t}}T\int_{0}^{1}\sigma^2_t\diff t\\
%		   & \approx \myp{\frac{j}{2\myp{n-\frac{j-1}{T}}} + \frac{\var{U} - \gamma(j)}{\int_{0}^{1}\sigma^2_t\diff t}}\int_{0}^{1}\sigma^2_t\diff t,
%\end{split}
%\end{equation*}
%where we approximate $n_T$ by $nT$ and $\int_{0}^{T}\sigma^2_t\diff t$ by $T\int_{0}^{1}\sigma^2_t\diff t.$
Hence, the finite sample bias may be viewed as a \emph{low frequency bias}.
\end{remark}
	
\section{The Pre-Averaging Method with Dependent Noise}\label{sec:pre-averaging}

In this section, we adapt a popular ``de-noise'' method --- the pre-averaging method --- to allow for
serially dependent noise in our general setting.
The pre-averaging method was originally introduced by~\cite{podolskij2009pre-averaging-1}
(see also~\cite{jacod2009pre-averaging-2},~\cite{jacod2010pre-averaging-3},~\cite{podolskij2009bipower}, and~\cite{hautsch2013preaveraging}). % We develop the consistency theory for the pre-averaging estimators and a central limit theorem for the integrated volatility.

\subsection{Setup and notation}
For a generic process $V$,
we denote its pre-averaged version by
\begin{equation}
\preavg{V} := \frac{1}{k_n+1}\sum_{i=(2m-2)k_n}^{(2m-1)k_n}\myp{V^n_{i+k_n} - V^n_{i}},
\label{eq:V_m^k_n}	
\end{equation}
for $1\leq m\leq M_n$ with $M_n=\lfloor \frac{\sqrt{n}}{2c}\rfloor$,
where $k_n\in\mathbb{N}$ satisfies
\begin{equation}
k_n = c\sqrt{n} + o(n^{1/4}),
\label{eq:k_nM_condition}
\end{equation}
for some positive constant $c$ and where $\lfloor\cdot\rfloor$ is the floor function. %\footnote{To be precise, we should denote $k_n =\lfloor c\sqrt{n} \rfloor$, where $\lfloor x\rfloor$ is the largest integer not exceeding $x$. However, for simplicity we keep the notation $k_n=c\sqrt{n}$. This is not restrictive for our analysis since for arbitrary $c>0$,  $c\sqrt{n}$ and $\lfloor c\sqrt{n} \rfloor$ are  asymptotically equivalent.}. %, where $[c\sqrt{n}]$ is the integer part of $c\sqrt{n}$.
For any real $r\geq 2$, the pre-averaged statistics of the log-price process $Y$ are defined as follows: %\footnote{It's called \emph{modulated bi-power variation} (MBV) in~\cite{podolskij2009pre-averaging-1} to indicate its close connection with the bi-power variation statistics in~\cite{barndorff2004bipower,barndorff2006bipower}.}
\begin{equation}
\Pbv(Y,r)_n: = n^{\frac{r-2}{4}}\sum_{m=1}^{M_n}\abs{\preavg{Y}}^r,\quad r\geq 2.
\label{eq:MBV_Y}
\end{equation}
\begin{remark}
Equation \eqref{eq:V_m^k_n} invokes a simple version of the pre-averaging method.
In particular, we take a simple weighting function to compute the pre-averages in the $m$-th \emph{non-overlapping} interval.
We refer to~\cite{jacod2009pre-averaging-2,jacod2010pre-averaging-3} and~\cite{podolskij2009bipower}
for the pre-averaging method with general weighting functions and pre-averaged values based on %statistics defined on
 overlapping intervals.	
\end{remark}
	
%where $M_n=\lfloor \frac{\sqrt{n}}{2c}\rfloor$ for the same $c$ as in~\eqref{eq:k_nM_condition}.
%Here, 	

We first present the following proposition, which provides the asymptotic distribution of the pre-averaged noise:
\begin{proposition}
\label{prop:pre-averaged noise}
Assume that the noise satisfies Assumption~\ref{assumption:dependent_noise} with $v>2$ and that $\sigma^2_U$ defined below is strictly positive.
%Furthermore, assume that $\expect{{|U|}^{2+\delta}}<\infty$ for some $0<\delta<\infty$.
Then, the following central limit theorem holds for $\preavg{U}$:
\begin{equation}
n^{1/4}\preavg{U}\convergeL \normdist{0}{\frac{2\sigma^2_U}{c}},
\label{eq:asy_distri_pre-averaged_noise}
\end{equation}
where
\begin{align}
\sigma^2_U = \var{U} + 2\sum_{j=1}^{\infty}\gamma(j),
\label{eq:sigma2_U}
\end{align}
and $c$ is defined in \eqref{eq:k_nM_condition}.
\end{proposition}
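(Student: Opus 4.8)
The plan is to reduce $\preavg{U}$ to a normalized difference of two block sums of the stationary noise sequence, identify its asymptotic variance from the (absolutely summable) autocovariances, and then obtain asymptotic normality from a central limit theorem for strongly mixing sequences; only convergence in law is required here, so no stable‑convergence machinery is needed.

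\textbf{Step 1 (Algebraic reduction).} Setting $a=(2m-2)k_n$ and telescoping the defining sum \eqref{eq:V_m^k_n}, the common index $a+k_n$ cancels, and one gets
\[
\preavg{U}=\frac{1}{k_n+1}\myp{\sum_{\ell=a+k_n+1}^{a+2k_n}U_\ell-\sum_{i=a}^{a+k_n-1}U_i}=:\frac{S^{(2)}_n-S^{(1)}_n}{k_n+1},
\]
a difference of two sums over $k_n$ consecutive indices lying in adjacent blocks separated by the single index $a+k_n$. By stationarity of $U$ the law of $S^{(2)}_n-S^{(1)}_n$ does not depend on $m$, so I may take $a=0$.

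\textbf{Step 2 (Asymptotic variance).} Since $v>2$, \eqref{eq:rho_strong_mixing} gives $\sum_{j\geq1}\abs{\gamma(j)}<\infty$, so $\sigma^2_U=\var{U}+2\sum_{j\geq1}\gamma(j)$ is finite (and positive by hypothesis). A routine expansion of $\var{S^{(i)}_n}$ together with the tail estimates $\sum_{j=1}^{k_n}j\abs{\gamma(j)}=o(k_n)$ and $k_n\sum_{j\geq k_n}\abs{\gamma(j)}=o(k_n)$ yields $\var{S^{(i)}_n}=k_n\sigma^2_U+o(k_n)$ for $i=1,2$. For the cross term, $\cov{S^{(1)}_n}{S^{(2)}_n}=\sum_{h=2}^{2k_n}N_n(h)\gamma(h)$ with $N_n(h)\leq\min(h,2k_n-h)$, and the same type of bound on $\sum_h h\abs{\gamma(h)}$ gives $\cov{S^{(1)}_n}{S^{(2)}_n}=o(k_n)$; hence $\var{S^{(2)}_n-S^{(1)}_n}=2k_n\sigma^2_U+o(k_n)$. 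Because $k_n=c\sqrt n+o(n^{1/4})$, this produces $\var{n^{1/4}\preavg{U}}=\frac{n^{1/2}}{(k_n+1)^2}\var{S^{(2)}_n-S^{(1)}_n}\to\frac{2\sigma^2_U}{c}$, which pins down the limiting variance.

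\textbf{Step 3 (Asymptotic normality).} It then suffices to show $(S^{(2)}_n-S^{(1)}_n)/\sqrt{2k_n}\convergeL\normdist{0}{\sigma^2_U}$, which combined with the scaling of Step 2 gives \eqref{eq:asy_distri_pre-averaged_noise}. I would establish this by the Bernstein big‑block/small‑block device: split the index range into alternating big blocks of length $p_n$ and small blocks of length $q_n$ with $q_n/p_n\to0$, $p_n/k_n\to0$, and rate conditions compatible with the polynomial decay $\alpha_h\leq C/h^v$ (e.g. $(k_n/p_n)\alpha_{q_n}\to0$, feasible since $v>2$); show that the small‑block sums and the single big block straddling the sign change are $o(\sqrt{k_n})$ in $L^2$; decouple the big‑block sums into independent copies with vanishing error in characteristic functions via the mixing coefficient; and verify a Lyapunov condition for the resulting independent array from $\expect{(S^{(i),\mathrm{big}}_n)^{4}}=O(p_n^{2})$ (a Rosenthal‑type inequality valid under polynomial mixing with all moments finite), so that $\sum_{\text{blocks}}\expect{(\,\cdot\,)^4}/(\var{S^{(2)}_n-S^{(1)}_n})^{2}=O(p_n/k_n)\to0$. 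Alternatively one may invoke a triangular‑array CLT for stationary strongly mixing sequences directly (see, e.g., \cite{jacod1987limit}, Chapter VIII).

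\textbf{Main obstacle.} The substantive point is Step 3. Two features demand care: the two block sums are only one index apart and carry opposite signs, so the limit variance is $2\sigma^2_U$ precisely because of the $o(k_n)$ cross‑covariance bound of Step 2; and, more fundamentally, one is running a genuine CLT for a \emph{growing} sum of dependent variables, where the tuning of $p_n,q_n$ against the mixing rate and the verification of the Lyapunov/Lindeberg condition are the delicate steps. The variance bookkeeping in Steps 1--2 is routine given the absolute summability of the autocovariances.
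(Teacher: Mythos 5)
Your proposal is correct, and it reaches the limit by a genuinely different route than the paper. The paper's proof is much shorter: it notes that the two adjacent block sums enter with opposite signs and then uses the symmetry of $U$ around zero to replace the signed difference, in distribution, by a single sum of the $2k_n+1$ consecutive noise terms (up to an $O_p(\sqrt{\Delta_n})$ remainder from the overlapping index), after which it directly invokes the CLT for stationary strongly mixing sequences (Corollary VIII~3.106 of \cite{jacod1987limit}), which delivers both the Gaussian limit and the variance $\sigma^2_U$ in one stroke. You instead keep the signed difference $S^{(2)}_n-S^{(1)}_n$, do the variance bookkeeping by hand (the bounds $\sum_{j\le k_n}j\abs{\gamma(j)}=o(k_n)$ and $k_n\sum_{j\ge k_n}\abs{\gamma(j)}=o(k_n)$ are indeed valid for $v>2$, and the cross-covariance bound via $N_n(h)\le\min(h,2k_n-h)$ is the right estimate), and then run a Bernstein big-block/small-block argument with a Rosenthal/Lyapunov fourth-moment check, which works here because all moments of $U$ are bounded and $\alpha_h\le Ch^{-v}$ with $v>2$. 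What each approach buys: your route is self-contained and, notably, does not lean on the sign-flip step — which in the paper implicitly uses symmetry of the joint law of the noise, not just marginal symmetry — and it makes the provenance of the constant $2\sigma^2_U/c$ (two blocks of size $k_n$, negligible cross term) completely explicit; the paper's route is far more economical, outsourcing both the variance identification and the triangular-array normality to the cited corollary. Your parenthetical alternative of invoking a mixing CLT directly would, for the signed difference, most naturally go through the functional version of that corollary (the limit being $\sigma_U(B_2-2B_1)$), since the difference is not itself a partial sum of a stationary sequence; as written, your primary blocking argument is the one that closes the proof without that detour.
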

\begin{proof}
See Appendix~\ref{appendix:prop:pre-averaged noise}.
\end{proof}
%\begin{remark}
%In particular, if the noise exhibits $q$-dependence, we have $\sigma^2_U = \var{U} + 2\sum_{j=1}^{q}\gamma(j)$.
%\end{remark}
For i.i.d. noise, $\sigma^2_U$ reduces to $\var{U}$, and it is known (see~\cite{zhang2005TSRV} and~\cite{bandi2008microstructure})
that the variance of noise can be consistently estimated by the standardized realized volatility of observed returns.
However, when noise is dependent we face a much more complex situation:
all variance and covariance terms constitute $\sigma^2_U$.
Nevertheless, we can provide a consistent estimator of $\sigma^2_U$, as follows:  %Nevertheless, we provide a consistent estimator of $\sigma^2_U$.

\begin{proposition}
\label{prop:consistent_nonpar_sigma2U}
Let $v>2$ and $j_n^3/n\rightarrow 0$. Define
\begin{equation}
\widehat{\sigma^2_U}: = \widehat{\var{U}}_n + 2\sum_{j=1}^{i_n}\widehat{\gamma{(j)}}_n,
\label{eq:consistent_nonpar_sigma2U}
\end{equation}
where $i_n$ satisfies the conditions $i_n\rightarrow\infty, i_n\leq j_n$,
and $\widehat{\var{U}}_n$ and $\widehat{\gamma(j)}_n$ are defined in~\eqref{eq:consistent_estimate_var_U_dependent} and~\eqref{eq:gamma(j)_hat}.
%Suppose the assumptions of Proposition \ref{prop:RV_Estimate_var+cov(1)} hold.
Then,
\begin{equation}
\widehat{\sigma^2_U}\Pconverge \sigma^2_U.
\end{equation}
\end{proposition}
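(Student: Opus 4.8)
The plan is to prove $\widehat{\sigma^2_U}\Pconverge \sigma^2_U$ by decomposing the difference $\widehat{\sigma^2_U} - \sigma^2_U$ into an estimation-error term and a truncation (tail) term, and controlling each separately. Writing things out,
\begin{equation*}
\widehat{\sigma^2_U} - \sigma^2_U = \myp{\widehat{\var{U}}_n - \var{U}} + 2\sum_{j=1}^{i_n}\myp{\widehat{\gamma(j)}_n - \gamma(j)} - 2\sum_{j=i_n+1}^{\infty}\gamma(j).
\end{equation*}
The last term is purely deterministic: by the mixing bound \eqref{eq:rho_strong_mixing} we have $\abs{\gamma(h)}\leq C'/h^{v/2}$ with $v>2$, so the series $\sum_j \gamma(j)$ converges absolutely and its tail $\sum_{j>i_n}\gamma(j)\to 0$ as $i_n\to\infty$. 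The first term, $\widehat{\var{U}}_n - \var{U}$, goes to zero in probability directly by Proposition~\ref{prop:RV_Estimate_var+cov(1)}, equation~\eqref{eq:consistent_estimate_var_U_dependent}, since the conditions $v>2$, $j_n\to\infty$, $j_n/n\to 0$ required there are implied by the present hypotheses $v>2$ and $j_n^3/n\to0$.

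The main work is the middle term $S_n := 2\sum_{j=1}^{i_n}\myp{\widehat{\gamma(j)}_n - \gamma(j)}$, which is a sum of a growing number $i_n$ of individually-small errors, so pointwise consistency of each $\widehat{\gamma(j)}_n$ is not enough — I need a rate. First I would write $\widehat{\gamma(j)}_n - \gamma(j) = \myp{\widehat{\var{U}}_n - \var{U}} - \myp{\widehat{\RV{Y,Y}}_n(j) - (\var{U}-\gamma(j))}$, so that
\begin{equation*}
S_n = 2 i_n\myp{\widehat{\var{U}}_n - \var{U}} - 2\sum_{j=1}^{i_n}\myp{\widehat{\RV{Y,Y}}_n(j) - \var{U} + \gamma(j)}.
\end{equation*}
For the first piece I would extract from the proof of Proposition~\ref{prop:RV_Estimate_var+cov(1)} (in the appendix) the rate at which $\widehat{\var{U}}_n - \var{U}\to 0$; the dominant error contributions are the diffusion part, of order $O_p(j_n/n)$ (cf. the finite-sample bias $j_n\int\sigma^2/(2(n-j_n+1))$ in Proposition~\ref{prop:Finite_Sample_Bias_Correction}), and a sampling-fluctuation part of order $O_p(n^{-1/2})$ coming from averaging $n-j_n+1$ weakly dependent squared terms. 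Multiplying by $i_n\leq j_n$ gives $i_n(\widehat{\var{U}}_n - \var{U}) = O_p(j_n^2/n) + O_p(j_n n^{-1/2}) = o_p(1)$, where $j_n^2/n\to 0$ follows from $j_n^3/n\to0$, and $j_n n^{-1/2}\to 0$ follows a fortiori. The same analysis, applied uniformly over $1\le j\le i_n$ to the $\widehat{\RV{Y,Y}}_n(j)$ terms and then summed, gives $\sum_{j=1}^{i_n}(\widehat{\RV{Y,Y}}_n(j) - \var{U}+\gamma(j)) = o_p(1)$ under the same conditions; here one uses that $\sum_{j=1}^{i_n} j \leq i_n^2 \leq j_n^2 = o(n)$ to tame the accumulated diffusion bias, and a second-moment (variance) bound on the sum, exploiting the mixing decay of $U$ and independence of $U$ and $X$, to tame the accumulated fluctuations.

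The key obstacle, then, is obtaining these \emph{uniform-in-$j$ rates} for $\widehat{\RV{Y,Y}}_n(j)$ and the control of the variance of the sum $\sum_{j\le i_n}\widehat{\RV{Y,Y}}_n(j)$: one must verify that the cross-covariances between the statistics at different lags $j,j'$ do not blow up the variance of the sum beyond $o(1)$, which is where the polynomial mixing condition $v>2$ and the moment bounds in Assumption~\ref{assumption:dependent_noise} do the essential work. Provided the appendix proof of Proposition~\ref{prop:RV_Estimate_var+cov(1)} is organized to yield such bounds (or can be re-run keeping track of constants and $j$-dependence), the three pieces combine to give $\widehat{\sigma^2_U} - \sigma^2_U = o_p(1)$, completing the proof. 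I would also remark that the condition $j_n^3/n\to 0$, strictly stronger than the $j_n/n\to0$ of Proposition~\ref{prop:RV_Estimate_var+cov(1)}, is exactly what is needed to absorb the accumulated finite-sample (diffusion) bias $\sum_{j\le i_n} j/(n-j+1) = O(j_n^2/n)$ once we allow $i_n$ itself to diverge.
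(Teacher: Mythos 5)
Your overall route is the same as the paper's: the published proof simply takes the two rates established at the end of the proof of Proposition~\ref{prop:RV_Estimate_var+cov(1)}, namely $\widehat{\RV{Y,Y}}_n(j)-\myp{\var{U}-\gamma(j)}=O_p\myp{\sqrt{j\Delta_n}}$ and $\widehat{\var{U}}_n-\var{U}=O_p\myp{\max\{\sqrt{j_n\Delta_n},\,j_n^{-v/2}\}}$, bounds the sum of the $i_n\leq j_n$ lag errors by $i_n$ times the worst single-lag rate, and handles the truncation tail $\sum_{j>i_n}\gamma(j)$ via \eqref{eq:rho_strong_mixing} with $v>2$. In particular, no separate uniform-in-$j$ argument or cross-lag covariance computation is required beyond what that proof already delivers, so the ``key obstacle'' you identify is lighter than you suggest.

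That said, your rate bookkeeping contains two concrete errors. First, the stochastic fluctuation of $\widehat{\RV{Y,Y}}_n(j)$ is $O_p\myp{\sqrt{j\Delta_n}}$, not $O_p(n^{-1/2})$: the second-moment bound \eqref{eq:Asy_Orders_RV(U)_j} for the noise part is of order $nj$ (increments $U^n_{i+j}-U^n_i$ with overlapping index windows are only boundedly, not negligibly, correlated), and the sharper $O_p(n^{-1/2})$ you assert is neither proved in the paper nor available in this generality. This is precisely why your closing remark misattributes the role of the hypothesis: the accumulated diffusion bias $\sum_{j\leq i_n}j/(n-j+1)=O(j_n^2/n)$ only needs $j_n^2=o(n)$; it is the accumulated fluctuation $i_n\sqrt{j_n\Delta_n}\leq\sqrt{j_n^3/n}$ that forces the stated condition $j_n^3/n\rightarrow0$. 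Second, your list of error contributions to $\widehat{\var{U}}_n-\var{U}$ omits the truncation bias $\gamma(j_n)=O(j_n^{-v/2})$: the statistic $\widehat{\RV{Y,Y}}_n(j_n)$ targets $\var{U}-\gamma(j_n)$, not $\var{U}$, and since $\widehat{\var{U}}_n$ enters every $\widehat{\gamma(j)}_n$, this bias accumulates to $O\myp{i_nj_n^{-v/2}}\leq O\myp{j_n^{1-v/2}}$, which vanishes only because $v>2$ (and, for $v$ close to $2$, can dominate the terms you do track). With these two corrections your argument closes exactly as the paper's does.
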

\begin{proof}
See Appendix~\ref{appendix:prop:consistent_nonpar_sigma2U}.
\end{proof}
%\begin{remark}[\textcolor{red}{need revision}]
%\label{rmk:converge_rate_HAC_estimator}
%The asymptotic condition $\frac{n}{{i_n}^2} =O(n^\ell), \ell >\frac{1}{2}$,
%can be weakened to $\frac{n}{{j_n^*}^2} \rightarrow \infty$, as one may observe from the proof of this proposition.
%The additional condition is needed to achieve a faster convergence rate (than $n^{1/4}$) in order to establish the associated central limit theorem;
%see Theorem~\ref{thm:CLT} below.
%However, we impose unified assumptions for ease of presentation.
%\end{remark}

\subsection{Asymptotic theory: Consistency}

The following results establish consistency and a central limit theorem
for the pre-averaged log-price process under dependent noise in our general setting.
\begin{theorem}%[Stochastic Convergence]
\label{thm:consistency}
Assume that the efficient log-price follows \eqref{eq:Eff_Price_Ito_Diffu},
the observations follow~\eqref{eq:Y^n=X^n+U^n},
and the noise process satisfies Assumption~\ref{assumption:dependent_noise}.
%and that there exists an $\varepsilon>0$ such that $\expect{|U|^{2r+\varepsilon}}<\infty$.
Then, for any even integer $r\geq 2$,
\begin{equation}
\Pbv(Y,r)_n\Pconverge \Pbv(Y,r) : =\frac{\mu_r}{2c}\int_{0}^{1}\myp{\frac{2c}{3}\sigma^2_s + \frac{2}{c}\sigma^2_U}^{\frac{r}{2}}\diff s,
\label{eq:LLN}
\end{equation}
where $\sigma^2_U$ is defined in~\eqref{eq:sigma2_U}
and $\mu_r = \expect{Z^r}$ for a standard normal random variable $Z$.
\end{theorem}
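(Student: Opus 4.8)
The plan is to decompose the pre-averaged observed returns $\preavg{Y} = \preavg{X} + \preavg{U}$ and analyze the two pieces on each non-overlapping block of length $2k_n$. First I would establish that, conditionally on the volatility path, the pre-averaged efficient-price increment $\preavg{X}$ is asymptotically a centered Gaussian with a variance that can be computed explicitly from \eqref{eq:V_m^k_n}: since $\preavg{X} = \frac{1}{k_n+1}\sum_{i}(X^n_{i+k_n}-X^n_i)$ is an average of $k_n$ overlapping Brownian increments over the $m$-th block, a direct covariance computation (replacing $\sigma_s$ by its value at the left endpoint of the block, using c\`adl\`ag paths and $k_n/\sqrt n \to c$) gives $\var(\preavg{X}\mid\sigma) \approx \frac{2c}{3\sqrt n}\,\sigma^2_{\tau_m}$ with a triangular-array weight sum that produces the constant $\frac{1}{3}$; hence $n^{1/4}\preavg{X}$ is approximately $\normdist{0}{\frac{2c}{3}\sigma^2_{\tau_m}}$. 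Combining this with Proposition~\ref{prop:pre-averaged noise}, which gives $n^{1/4}\preavg{U}\convergeL\normdist{0}{2\sigma^2_U/c}$, and using Assumption~\ref{assumption:dependent_noise}(2) (independence of $X$ and $U$) so the two variances add, I get that $n^{1/4}\preavg{Y}$ behaves like a Gaussian with variance $\frac{2c}{3}\sigma^2_{\tau_m} + \frac{2}{c}\sigma^2_U$ on block $m$.

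Next I would pass from the marginal behavior of a single block to the average over blocks $m=1,\dots,M_n$ with $M_n=\lfloor\sqrt n/(2c)\rfloor$. The key point is that $n^{(r-2)/4}\abs{\preavg{Y}}^r = \frac{1}{M_n}\cdot\frac{M_n}{2c\sqrt n/(2c)}\,\bigl(n^{1/4}\abs{\preavg{Y}}\bigr)^r$, so $\Pbv(Y,r)_n$ is (up to the $M_n/\sqrt n\to 1/(2c)$ normalization) a Riemann-type average of $r$-th absolute moments of the block-wise Gaussians. For an even integer $r$, $\expect{\abs{G}^r} = \mu_r (\text{variance})^{r/2}$ for $G$ centered Gaussian, so block $m$ contributes approximately $\mu_r\bigl(\frac{2c}{3}\sigma^2_{\tau_m}+\frac{2}{c}\sigma^2_U\bigr)^{r/2}$. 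Summing over $m$, dividing by $2c\sqrt n$ worth of blocks, and recognizing the Riemann sum of the c\`adl\`ag integrand $s\mapsto\bigl(\frac{2c}{3}\sigma^2_s+\frac{2}{c}\sigma^2_U\bigr)^{r/2}$ over $[0,1]$ yields exactly the right-hand side of \eqref{eq:LLN}. Restricting to even $r$ is what lets me use the exact Gaussian-moment formula and, more importantly, compute conditional expectations of $\abs{\preavg{Y}}^r$ via the multinomial/Isserlis expansion without absolute values, which is essential for the variance bound below.

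To make the convergence in probability rigorous rather than heuristic I would work with the conditional (on the $\sigma$-path) first and second moments of $\Pbv(Y,r)_n$. For the mean: expand $\abs{\preavg{Y}}^r = \sum_{\ell=0}^r \binom{r}{\ell}(\preavg{X})^\ell(\preavg{U})^{r-\ell}$, take $\expectsigma{\cdot}$ using independence, and control the cross terms and the error from freezing $\sigma$ and from the $o(n^{1/4})$ slack in \eqref{eq:k_nM_condition}; the dominant term reproduces the claimed limit and the remainder is $o(1)$ uniformly. For the variance: I need $\varJ{\cdot}$-type bounds showing $\var\bigl(\Pbv(Y,r)_n\mid\sigma\bigr)\to 0$. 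Here the mixing condition (Assumption~\ref{assumption:dependent_noise}(3), $v>2$) enters decisively — the blocks are non-overlapping and separated, so $\cov(\abs{\preavg{Y}_m}^r,\abs{\preavg{Y}_{m'}}^r\mid\sigma)$ decays in $|m-m'|$ fast enough (using a covariance inequality for strongly mixing sequences, as in Chapter VIII of~\cite{jacod1987limit}, together with the bounded-moments assumption) that the $M_n$-fold sum of covariances, after the $n^{(r-2)/2}$ normalization, is $o(1)$. Then Chebyshev gives convergence in probability conditionally on $\sigma$, and since the limit is $\sigma$-measurable and the bound is uniform, it upgrades to unconditional convergence in probability. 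I expect the main obstacle to be precisely this variance estimate: one has to handle within-block higher-order moments of $\preavg{U}$ (a mixing-sequence CLT-type argument is needed just to identify the block variance $2\sigma^2_U/c$, already done in Proposition~\ref{prop:pre-averaged noise}, but the fourth-moment control of $\preavg{U}$ and the cross-block decay require care) while simultaneously tracking the $\sigma$-freezing error at the block scale $k_n = c\sqrt n$, so that no term of order larger than the target survives.
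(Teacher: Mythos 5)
Your overall architecture---a frozen-$\sigma$ Gaussian approximation of $\preavg{X}$ on each block, the noise CLT of Proposition~\ref{prop:pre-averaged noise}, the even-$r$ Gaussian moment formula $\mu_r(\cdot)^{r/2}$, and a Riemann sum over the $M_n$ blocks---is the same as the paper's. The genuine gap is in how you propose to make this rigorous: you condition on the \emph{entire} volatility path and then treat the within-block increments of $W$ as independent Gaussians whose variance is read off from $\sigma$, so that $\var{\preavg{X}\,|\,\sigma}\approx \frac{2c}{3\sqrt n}\,\sigma^2_{\tau_m}$ and $\expectsigma{\abs{\preavg{Y}}^r}\approx n^{-r/4}\mu_r\myp{\frac{2c}{3}\sigma^2_{\tau_m}+\frac{2}{c}\sigma^2_U}^{r/2}$. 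This is only legitimate if $\sigma$ is independent of $W$. Theorem~\ref{thm:consistency} assumes merely that $\sigma$ is adapted with c\`adl\`ag paths, so $\sigma$ may be driven by $W$ itself (leverage); conditioning on the whole path $\{\sigma_s\}_{s\leq 1}$ then changes the law of the Brownian increments inside each block and neither the conditional variance identity nor the conditional Gaussian-moment step is justified. (A smaller instance of the same problem: Assumption~\ref{assumption:dependent_noise} gives independence of $U$ and $X$ only, so $\conexp{(\preavg{U})^{r-\ell}}{\sigma}$ need not equal the unconditional moment either.) The paper avoids this by conditioning only on $\mathcal{F}_{\frac{m-1}{M_n}}$: the post-block-start Brownian increments are genuinely independent of that $\sigma$-field while $\sigma_{\frac{m-1}{M_n}}$ is measurable with respect to it; and because this $\sigma$-field contains past noise, the conditional noise moments are decoupled from it by splitting off a sub-block of length $r_n$ and invoking the mixing lemma (Lemma VIII 3.102 of~\cite{jacod1987limit})---a step your proposal does not contain but would need in any conditioning scheme that includes past noise.

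The second difference concerns the fluctuation control, where your route is workable but heavier than necessary. The paper needs no cross-block covariance or mixing bounds for the law of large numbers: after centering each block's contribution at its $\mathcal{F}_{\frac{m-1}{M_n}}$-conditional expectation, the summands are martingale differences (the blocks are non-overlapping in time), and negligibility follows from uniform moment bounds on $n^{1/4}\preavg{Y}$ together with Lemma 2.2.11 of~\cite{jacod2011discretization}. Your Chebyshev-plus-mixing argument can be pushed through for the noise part---note, though, that adjacent blocks are not ``separated'': block $m$ and block $m+1$ share the index $2mk_n$, so those $O(M_n)$ neighbouring pairs must be handled by Cauchy--Schwarz rather than a mixing inequality, which is fine since each normalized summand is $O_p(n^{-1/2})$---but for the signal part it again collapses onto the conditioning problem above unless you exclude leverage. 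If you replace ``condition on the $\sigma$-path'' by conditioning on the block-start filtration and add the small-block/large-block decoupling of the noise, your argument essentially becomes the paper's proof.
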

\begin{proof}
See Appendix~\ref{appendix:thm_consistency}.
\end{proof}
\begin{corollary}\label{corollary:consistency_IV}
Under the assumptions of Proposition~\ref{prop:consistent_nonpar_sigma2U} and Theorem~\ref{thm:consistency},
we have the following consistency result for the integrated volatility: %for any $r^*_2+l^*_2 = 2$
%		\begin{equation}
%		\label{eq:consistency_SV}
%		{\rm \widehat{IV}}^{(q)}_n(r^*_2,l^*_2):=3\myp{\frac{\Pbv(Y,r^*_2,l^*_2)_n}{\mu_{r^*_2}\mu_{l^*_2}} - \frac{\widehat{\sigma^2_U(q)}_n}{c^2}}\Pconverge \int_{0}^{1}\sigma^2_s\diff s.
%		\end{equation}
%	With a nonparametric estimator of $\sigma^2_U$, we have
\begin{equation}
{\rm \widehat{IV}}_n:=3\myp{\Pbv(Y,2)_n - \frac{\widehat{\sigma^2_U}}{c^2}}\Pconverge \int_{0}^{1}\sigma^2_s\diff s,
\label{eq:consistency_SV_nonpar}
\end{equation}
where $\widehat{\sigma^2_U}$ is defined in~\eqref{eq:consistent_nonpar_sigma2U}.
\end{corollary}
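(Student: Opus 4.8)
The plan is to derive Corollary~\ref{corollary:consistency_IV} by combining Theorem~\ref{thm:consistency} applied at $r=2$ with the consistent estimator of $\sigma^2_U$ from Proposition~\ref{prop:consistent_nonpar_sigma2U}, and then invoking the continuous mapping theorem (Slutsky). First I would specialize \eqref{eq:LLN} to $r=2$: since $\mu_2 = \expect{Z^2} = 1$ and the integrand's exponent is $1$, the sum collapses to
\begin{equation*}
\Pbv(Y,2)_n \Pconverge \frac{1}{2c}\int_{0}^{1}\myp{\frac{2c}{3}\sigma^2_s + \frac{2}{c}\sigma^2_U}\diff s
= \frac{1}{3}\int_{0}^{1}\sigma^2_s\diff s + \frac{\sigma^2_U}{c^2},
\end{equation*}
where I have pulled the constant $\sigma^2_U$ out of the time integral over $[0,1]$. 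This already isolates $\int_{0}^{1}\sigma^2_s\diff s$ up to the additive nuisance term $\sigma^2_U/c^2$.

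Next I would handle the subtracted term. By Proposition~\ref{prop:consistent_nonpar_sigma2U}, under the stated joint assumptions ($v>2$, $j_n^3/n\to 0$, and the $i_n$ conditions), $\widehat{\sigma^2_U}\Pconverge\sigma^2_U$; dividing by the fixed positive constant $c^2$ gives $\widehat{\sigma^2_U}/c^2 \Pconverge \sigma^2_U/c^2$. Then I would apply the standard fact that convergence in probability is preserved under sums (Slutsky's theorem for $\Pconverge$):
\begin{equation*}
\Pbv(Y,2)_n - \frac{\widehat{\sigma^2_U}}{c^2} \Pconverge \myp{\frac{1}{3}\int_{0}^{1}\sigma^2_s\diff s + \frac{\sigma^2_U}{c^2}} - \frac{\sigma^2_U}{c^2} = \frac{1}{3}\int_{0}^{1}\sigma^2_s\diff s.
\end{equation*}
Multiplying through by the constant $3$ (again using continuity of scalar multiplication with respect to $\Pconverge$) yields ${\rm \widehat{IV}}_n \Pconverge \int_{0}^{1}\sigma^2_s\diff s$, which is exactly \eqref{eq:consistency_SV_nonpar}.

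The only genuine subtlety is bookkeeping on the hypotheses: the statement invokes ``the assumptions of Proposition~\ref{prop:consistent_nonpar_sigma2U} and Theorem~\ref{thm:consistency}'' jointly, so I would briefly check that these are mutually compatible — Theorem~\ref{thm:consistency} needs only Assumption~\ref{assumption:dependent_noise}, while Proposition~\ref{prop:consistent_nonpar_sigma2U} additionally imposes $v>2$ and the rate conditions $j_n^3/n\to 0$, $i_n\to\infty$, $i_n\le j_n$ on the tuning sequences entering $\widehat{\sigma^2_U}$, and these do not conflict with the pre-averaging bandwidth choice $k_n = c\sqrt{n} + o(n^{1/4})$ in \eqref{eq:k_nM_condition}. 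Since everything else is a one-line application of Slutsky-type arguments, there is no real obstacle; the proof is essentially an algebraic rearrangement of Theorem~\ref{thm:consistency} at $r=2$ plus substitution of the plug-in estimator. I would close by remarking that consistency (as opposed to a CLT) is all that is claimed here, so no rate or stable-convergence machinery is needed at this point.
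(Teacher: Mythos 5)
Your proposal is correct and is exactly the argument the paper intends: Corollary~\ref{corollary:consistency_IV} is an immediate consequence of Theorem~\ref{thm:consistency} specialized to $r=2$ (with $\mu_2=1$, so $\Pbv(Y,2)_n\Pconverge \tfrac{1}{3}\int_0^1\sigma^2_s\,\diff s + \sigma^2_U/c^2$) together with $\widehat{\sigma^2_U}\Pconverge\sigma^2_U$ from Proposition~\ref{prop:consistent_nonpar_sigma2U} and a Slutsky-type step, which is why the paper states it without a separate proof. Your algebra and bookkeeping of the hypotheses are both accurate.
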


\subsection{Asymptotic theory: The central limit theorem}

\begin{theorem}
\label{thm:CLT}
Assume that the efficient log-price follows \eqref{eq:Eff_Price_Ito_Diffu},
the observations follow~\eqref{eq:Y^n=X^n+U^n},
and the noise process satisfies Assumption~\ref{assumption:dependent_noise}.
%and that there exists an $\varepsilon>0$ such that $\expect{|U|^{8+\varepsilon}}<\infty$.
Furthermore, assume that the process $\sigma$ is a continuous It\^o semimartingale, and the assumptions of Proposition~\ref{prop:consistent_nonpar_sigma2U} hold with $v>4$.
Then,
\begin{equation}
n^{1/4}\myp{{\rm \widehat{IV}}_n-\int_{0}^{1}\sigma^2_s\diff s}\LsConverge \int_{0}^{1}\myp{2\sqrt{c}\sigma^2_s + \frac{6\sigma^2_U}{c^{3/2}}}\diff W_s',
\label{eq:LS}
\end{equation}
where $\LsConverge$ denotes stable convergence in law
and where $W'$ is a standard Wiener process independent of $\mathcal{F}$.
Moreover, letting $\tau_n^2 := 6\Pbv(Y,4)_n$, we have that
\begin{align}
\frac{n^{1/4}\myp{{\rm \widehat{IV}}_n-\int_{0}^{1}\sigma^2_s\diff s}}{\tau_n}
\label{eq:stdGau}
\end{align}
converges stably in law to a standard normal random variable, which is independent of $\mathcal{F}$.
%The convergence rate $n^{1/4}$ is optimal.
\end{theorem}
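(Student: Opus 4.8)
\medskip
\noindent\textbf{Proof strategy.}
By Corollary~\ref{corollary:consistency_IV} and $\mu_2=1$,
\[
n^{1/4}\myp{{\rm \widehat{IV}}_n-\int_0^1\sigma^2_s\diff s}=3n^{1/4}\myp{\Pbv(Y,2)_n-\frac{\widehat{\sigma^2_U}}{c^2}-\frac13\int_0^1\sigma^2_s\diff s}.
\]
Expanding $\abs{\preavg{Y}}^2=\abs{\preavg{X}}^2+2\preavg{X}\,\preavg{U}+\abs{\preavg{U}}^2$ and summing gives $\Pbv(Y,2)_n=S_n+C_n+N_n$, where $S_n=\sum_{m=1}^{M_n}\abs{\preavg{X}}^2$, $C_n=2\sum_{m=1}^{M_n}\preavg{X}\,\preavg{U}$ and $N_n=\sum_{m=1}^{M_n}\abs{\preavg{U}}^2$. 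A standard localization (Assumption~\ref{assumption:dependent_noise}, local boundedness of $a$, and, for this theorem, the continuous It\^o--semimartingale property of $\sigma$) lets us take $a\equiv0$ and $\sigma$, all moments of $U$ bounded; as $\sigma$ is then $1/2$-H\"older in $L^2$, block-wise replacement of $\preavg{X}$ by $\sigma_{(2m-2)k_n/n}$ times the associated pre-averaged Brownian increment is $o_p(n^{-1/4})$ after summation, and likewise $\expectsigma{S_n}=\frac13\int_0^1\sigma^2_s\diff s+o_p(n^{-1/4})$ and $\expect{N_n}=\sigma^2_U/c^2+o_p(n^{-1/4})$ by \eqref{eq:sigma2_U} and Proposition~\ref{prop:pre-averaged noise}. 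It thus suffices to: (a) find the joint stable limit of $n^{1/4}$ times $\myp{S_n-\expectsigma{S_n},\,C_n,\,N_n-\expect{N_n}}$; (b) show $n^{1/4}\myp{\widehat{\sigma^2_U}-\sigma^2_U}=o_p(1)$; (c) derive \eqref{eq:stdGau} from \eqref{eq:LS}.

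For (a), the signal term $n^{1/4}\myp{S_n-\expectsigma{S_n}}$ is treated by the standard pre-averaging CLT of~\cite{jacod2009pre-averaging-2,jacod2010pre-averaging-3,podolskij2009bipower}: the centered block contributions form a martingale-difference array with respect to the block-end filtration, and the stable martingale CLT produces a mixed-Gaussian limit with $\mathcal F$-conditional variance proportional to $\int_0^1\sigma^4_s\diff s$. Conditioning on the path of $X$ (legitimate since $U$ is independent of $X$), $C_n$ is a weighted sum of the pre-averaged noises $\preavg{U}$ over essentially non-overlapping index blocks; a blocking argument based on the polynomial mixing bound \eqref{eq:alpha_mixing_coeff_v} yields their asymptotic independence, and since $\var{\preavg{U}}\sim 2\sigma^2_U/(c\sqrt n)$ by Proposition~\ref{prop:pre-averaged noise}, a Lindeberg CLT gives a mixed-Gaussian limit with $\mathcal F$-conditional variance proportional to $\sigma^2_U\int_0^1\sigma^2_s\diff s$.

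The genuinely new ingredient is the quadratic noise term $n^{1/4}\myp{N_n-\expect{N_n}}$, a CLT for a quadratic functional of the strongly mixing sequence $U$ at the pre-averaging scale. I would first upgrade Proposition~\ref{prop:pre-averaged noise} to a block-wise statement ($n^{1/4}\preavg{U}$ asymptotically $\normdist{0}{2\sigma^2_U/c}$, uniformly in $m$), whence $\var{\abs{\preavg{U}}^2}\sim 2\bigl(\expect{\abs{\preavg{U}}^2}\bigr)^2$, so that only $\sigma^2_U$ (not the higher moments of $U$) enters the limit; then a big-block/small-block decomposition exploiting \eqref{eq:alpha_mixing_coeff_v} with $v>4$ shows the squared block pre-averages are asymptotically independent with negligible small blocks, and the Lindeberg CLT gives $n^{1/4}\myp{N_n-\expect{N_n}}\convergeL\normdist{0}{4\sigma^4_U/c^3}$. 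The three pieces are asymptotically uncorrelated: $S_n-\expectsigma{S_n}$ is $\mathcal F^X$-measurable while $C_n$ and $N_n-\expect{N_n}$ are $\mathcal F^X$-conditionally centered (killing the covariances with $S_n$), and $\cov{C_n}{N_n}$ involves only odd joint moments of $U$, which vanish by symmetry. Summing, $3n^{1/4}\myp{\Pbv(Y,2)_n-\sigma^2_U/c^2-\tfrac13\int_0^1\sigma^2_s\diff s}$ has a mixed-Gaussian limit with $\mathcal F$-conditional variance $\int_0^1\myp{2\sqrt c\,\sigma^2_s+6\sigma^2_U/c^{3/2}}^2\diff s$, representable as $\int_0^1\myp{2\sqrt c\,\sigma^2_s+6\sigma^2_U/c^{3/2}}\diff W'_s$ for a Wiener process $W'$ independent of $\mathcal F$ on an extension of the probability space. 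This is the main obstacle: the block-wise asymptotic normality of $\preavg{U}$ and the across-block decoupling of $\abs{\preavg{U}}^2$ under merely polynomial mixing, together with stably coupling this mixing-sequence CLT to the semimartingale CLT for $S_n$.

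For (b), $\widehat{\sigma^2_U}-\sigma^2_U=\bigl(\widehat{\var{U}}_n-\var{U}\bigr)+2\sum_{j=1}^{i_n}\bigl(\widehat{\gamma(j)}_n-\gamma(j)\bigr)-2\sum_{j>i_n}\gamma(j)$; by the arguments behind Propositions~\ref{prop:RV_Estimate_var+cov(1)} and~\ref{prop:Finite_Sample_Bias_Correction}, each term in the first two groups is $O_p(n^{-1/2})+O(j_n/n)+O(j_n^{-v/2})$ and the tail is $O(i_n^{1-v/2})$ by \eqref{eq:rho_strong_mixing}, so under the stated conditions (with $v>4$ and $j_n,i_n$ growing fast enough, as permitted) all are $o_p(n^{-1/4})$ and \eqref{eq:LS} follows from (a). For (c), by Theorem~\ref{thm:consistency} with $r=4$ and $\mu_4=3$, $\tau^2_n=6\Pbv(Y,4)_n\Pconverge\frac9c\int_0^1\myp{\tfrac{2c}{3}\sigma^2_s+\tfrac{2}{c}\sigma^2_U}^2\diff s=\int_0^1\myp{2\sqrt c\,\sigma^2_s+6\sigma^2_U/c^{3/2}}^2\diff s=:V$, which is $\mathcal F$-measurable and a.s.\ strictly positive since $\sigma^2_U>0$. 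The limit $\mathcal Z$ in \eqref{eq:LS} is $\mathcal F$-conditionally $\normdist{0}{V}$, so $\mathcal Z/\sqrt V$ is standard normal and independent of $\mathcal F$; combining the stable convergence in \eqref{eq:LS} with $\tau^2_n\Pconverge V$ in the usual way yields that \eqref{eq:stdGau} converges stably in law to this standard normal, completing the proof.
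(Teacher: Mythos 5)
Your decomposition $\Pbv(Y,2)_n=S_n+C_n+N_n$ and the variance bookkeeping are sound — the per-block contributions of the three pieces are indeed conditionally uncorrelated, and your block variances ($4c\int_0^1\sigma^4_s\diff s$, $\tfrac{24\sigma^2_U}{c}\int_0^1\sigma^2_s\diff s$, $\tfrac{36\sigma^4_U}{c^3}$ after multiplying by $3$) add up exactly to the conditional variance in \eqref{eq:LS}. The genuine gap is the combination step, which you yourself flag as "the main obstacle" and then do not resolve: proving marginal CLTs for $n^{1/4}(S_n-\expectsigma{S_n})$, $n^{1/4}C_n$ and $n^{1/4}(N_n-\expect{N_n})$ together with pairwise asymptotic uncorrelatedness does not yield the joint limit of their sum, and a fortiori does not yield \emph{stable} convergence with a Gaussian factor independent of $\mathcal{F}$. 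Stability is not a cosmetic refinement here: it is exactly what is needed to divide by the random quantity $\tau_n$ and obtain \eqref{eq:stdGau}, and it requires verifying orthogonality of the block martingale differences to $W$ and to all bounded martingales (both those orthogonal to $W$ and those generated by finitely many prices), which your proposal never addresses. As written, the "stable coupling of the mixing-sequence CLT to the semimartingale CLT" is precisely the content of the theorem's hard part, not a routine patch.

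The paper avoids this issue by never splitting the statistic: each block is approximated by the single variable $\beta^n_m=n^{1/4}\myp{\sigma_{\frac{m-1}{M_n}}\preavg{W}+\preavg{U}}$, the centered array $\theta^n_m\propto\myp{\beta^n_m}^2-\conexp{\myp{\beta^n_m}^2}{\mathcal{F}_{\frac{m-1}{M_n}}}$ is a martingale-difference triangular array for the block-end filtration, and the stable CLT for such arrays (Theorem IX.7.28 of \cite{jacod1987limit}) is applied once. The dependent noise then enters only through (i) a refinement of the conditional second moment of $\beta^n_m$ to accuracy $o_p(n^{-1/4})$, obtained by a small-block/big-block split of length $r_n\asymp n^{\vartheta}$ and the covariance inequality for strongly mixing sequences (this is where $v>4$ is used), (ii) the conditional fourth-moment computation giving the limiting conditional variance, and (iii) the conditional Lindeberg and orthogonality conditions that deliver stability; a final lemma shows $\sum_m\myp{\preavg{Y}}^2-n^{-1/2}\sum_m\myp{\beta^n_m}^2=o_p(n^{-1/4})$. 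If you fold your three pieces back into this single block-level array, your argument essentially becomes the paper's proof. Your items (b) and (c) do match the paper: studentization via $\Pbv(Y,4)_n$ with $\mu_4=3$ is exactly how \eqref{eq:stdGau} is obtained, and the negligibility of $n^{1/4}\myp{\widehat{\sigma^2_U}-\sigma^2_U}$ is treated no less tersely in the paper than in your sketch (note, though, that for $v$ only slightly above $4$ your rate bounds $i_n\sqrt{j_n/n}$ and $i_n^{1-v/2}$ cannot both be made $o(n^{-1/4})$ with $i_n\le j_n$, so "growing fast enough" is not innocuous).
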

\begin{proof}
See Appendix~\ref{appendix:thm_CLT}.
\end{proof}
\begin{remark}
\label{rmk:how_to_choose_c}
The limit result in \eqref{eq:LS} provides a simple rule to select $c$ conditional on the volatility path: $c$ can be chosen to minimize the asymptotic variance. The optimal $c$ thus obtained is given by
\begin{equation}
c^* = 3\sqrt{\frac{\sigma^2_U}{\int_{0}^{1}\sigma^2_s\diff s}}.
\label{eq:how_to_choose_c}
\end{equation}
This result is intuitive:
if the noise-to-signal ratio is large, we should pick a large $c$,
hence include more observations in a local pre-averaging window to reduce the noise effect.
With typical noise-to-signal ratios that range from $10^{-2}$ to $10^{-4}$ as encountered in practice,
the optimal $c^*\in [0.03,0.3]$.
In our simulation and empirical studies, we throughout fix $c=0.2$.
%In practice, we can initially select some moderate $c$, e.g., $c=0.1$,
%to obtain some prior information on $\sigma^2_U$ and $\int_{0}^{1}\sigma^2_s\diff s$.
%Next, we can update the choice of $c$ accordingly.
\end{remark}

\section{Two-Step Estimators and Beyond}
\label{sec:two-step estimators}
In this section, we present our two-step estimators of the integrated volatility and the second moments of noise
based on both our asymptotic theory and finite sample analysis.% finite sample bias.
	
We observe from Corollary~\ref{corollary:consistency_IV} that the second moments of noise contribute to an \emph{asymptotic bias} in the estimation of the integrated volatility.
But our finite sample analysis indicates that we need an estimator of the integrated volatility to correct the \emph{finite sample bias} when estimating the second moments of noise.
Our two-step estimators are specifically designed for the purpose of correcting the ``interlocked'' bias.
	
In the first step, we ignore the dependence in noise and estimate the variance of noise by realized volatility.
Hence, our first-step estimators of the second moments of noise are given by
\begin{equation}
\widehat{\var{U}}_{\rm step1} := \widehat{\RV{Y,Y}}_n(1);\quad\widehat{\gamma(j)}_{\rm step1} := 0;\quad\widehat{\sigma}^2_{U,{\rm step1}} := \widehat{\RV{Y,Y}}_n(1).
\end{equation}
Next, we proceed with the pre-averaging method to obtain the first-step estimator of the integrated volatility:
\begin{equation}
\widehat{\rm IV}_{\rm step1}:=3\myp{\Pbv(Y,2)_n -\frac{\widehat{\sigma}^2_{U,{\rm step1}}}{c^2}}.
\label{eq:1stStepIV}
\end{equation}
	
To initiate the second step, we first replace $\hat{\sigma}^2$ by $\widehat{\rm IV}_{\rm step1}$ in~\eqref{eq:RV_SSBC} and~\eqref{eq:var_noise_dependent_SSBC} and obtain the second-step estimators of the variance and covariances of noise as follows:%\footnote{Note
%that $j_n$ and $j_n^*$ are different quantities.
%They appear in Proposition~\ref{prop:consistent_estimate_var_U_dependence} and Proposition~\ref{prop:consistent_nonpar_sigma2U}, respectively.}
\begin{align}
	\widehat{\RV{Y,Y}}_{\rm step2}(j) & := \widehat{\RV{Y,Y}}_n(j)-  \frac{j\widehat{\rm IV}_{\rm step1}}{2(n-j+1)};\\
	\widehat{\var{U}}_{\rm step2}  & := \widehat{\var{U}}_n -  \frac{j_n\widehat{\rm IV}_{\rm step1} }{2(n-j_n+1)};\label{eq:2ndStepVarU}\\
	\widehat{\gamma(j)}_{\rm step2}  & := \widehat{\var{U}}_{\rm step2} - \widehat{\RV{Y,Y}}_{\rm step2}(j);\label{eq:2ndStepGammaj}\\
	\widehat{\sigma}^2_{U,\rm step2} & := \widehat{\var{U}}_{\rm step2} + 2\sum_{j=1}^{i_n} \widehat{\gamma(j)}_{\rm step2}\label{eq:2ndStepSigma2U}.
\end{align}
Then, the second-step estimator of the integrated volatility is given by
\begin{equation}
\widehat{\rm IV}_{\rm step2}:=3\myp{\Pbv(Y,2)_n -\frac{\widehat{\sigma}^2_{U,{\rm step2}}}{c^2}}.
\label{eq:2ndStepIV}
\end{equation}

The asymptotic properties of the two-step estimators are inherited
from the asymptotic properties derived in the previous section.
Of course, one can iterate beyond the two steps to obtain $k$-step estimators, for example, $\widehat{\rm IV}_{\rm step3}$.
The next section will present simulation evidence to compare the performances of the proposed estimators.
% $k$-step estimators.
%But
As the results in the following section reveal,
the two-step estimators already perform very well.
%	and obtain a bias corrected $\widehat{\sigma^2_U}$
	
%We propose two-step estimators to correct the bias. In the first step, we obtain some reasonable estimates of the bias term
%--- this is crucial since in real applications the amount of data we work on is always finite.

\section{Simulation Study}
\label{sec:simulation}

\subsection{Simulation design}
%\subsection{Model setup}

We consider an autoregressive noise process $U$ %described in~\cite{Ait-Sahalia2011DependentNoise} and
given by the following dynamics:
\begin{equation}
U_{t} = V_{t} + \epsilon_{t},
\label{eq:ARnoise}
\end{equation}
where $V$ is centered i.i.d. Gaussian and $\epsilon$ is an AR(1) process with first-order coefficient $\rho$, $|\rho|<1$.
The processes $V$ and $\epsilon$ are assumed to be statistically independent.
As benchmark parameters, we use the GMM estimates of the noise parameters from~\cite{Ait-Sahalia2011DependentNoise}
given by $\expect{V^2} = 2.9\times 10^{-8}$, $\expect{\epsilon^2} = 4.3\times 10^{-8}$, and $\rho = -0.7$.
We also allow for different dependence structures by varying our choice of $\rho$.
Furthermore, the efficient log-price $X$ is assumed to follow an Ornstein-Uhlenbeck process:
\begin{equation}
\diff X_t  = -\delta(X_t-\mu) \diff t + \sigma\diff W_t,\qquad\:\delta>0,\ \sigma>0.
\label{eq:OUprice}
\end{equation}
%\subsection{Parametric settings}
We set $\sigma^2=6\times 10^{-5}$, $\delta = 0.5$, and $\mu = 1.6$,
and assume the processes $X$ and $U$ to be mutually independent.
The signal-to-noise ratio induced by this model for $Y_{t}=X_{t}+U_{t}$ is realistic,
according to empirical studies; see, e.g.,~\cite{bandi2006separating,bandi2008microstructure}.
For all the experiments in this section, we conduct $1\mathord{,}000$ simulations.
Each simulated sample consists of $23\mathord{,}400$ observations in our fixed time interval $[0,1]$
representing one trading day of data sampled at the 1-sec time scale with 6.5 trading hours per day.
The ultra high-frequency case with sampling at the 0.05-sec time scale is also considered.
We take $c=0.2$.

%	We simulate $23,400$ equally spaced observations, corresponding to data on second time scale with 6.5 trading hours per day.
	%\subsection{Simulation results}

\subsection{Realized volatility estimators of the second moments of noise}
%	\subsubsection{Consistency and finite sample bias correction of realized volatility}

To get a first impression of the properties of our estimator $\widehat{\RV{Y,Y}}_n(j)$ defined in~\eqref{eq:Pconverge_jth_RV},
we plot %a ``sample path'' of
$\widehat{\RV{Y,Y}}_n(j)$ against the number of lags $j$ in Figure~\ref{fig:RVpath}.
In addition to $\widehat{\RV{Y,Y}}_n(j)$, we also plot the bias adjusted version $\widehat{\RV{Y,Y}}_n^{(\rm adj)}(j)$ defined in~\eqref{eq:RV_SSBC},
in which we employ three ``approximations'' to the integrated volatility that $\widehat{\RV{Y,Y}}_n^{(\rm adj)}(j)$ depends on:
$\hat{\sigma}^2_H = 1.2\sigma^2$, $\hat{\sigma}^2_M = \sigma^2$, and $\hat{\sigma}^2_L = 0.8\sigma^2$.
% The exact yet infeasible finite sample bias correction with $\hat{\sigma}^2_{M} = \sigma^2$ is also considered.
Figure~\ref{fig:RVpath} shows that a prominent feature of our realized volatility estimator $\widehat{\RV{Y,Y}}_n(j)$ is
that it deviates from its stochastic limit $\var{U}-\gamma{(j)}$ almost linearly in the number of lags $j$,
as predicted by Proposition~\ref{prop:Finite_Sample_Bias_Correction}.
The deviation, induced by the finite sample bias,
can be  corrected  to a large extent %largely
when only rough ``estimates'' of the integrated volatility are available.
In the ideal but infeasible situation that we know %exactly
 the true volatility ($\hat{\sigma}^2_M = \sigma^2$),
the bias corrected estimators almost perfectly match the underlying true values.
	%The bottom panel of Figure~\ref{fig:RVpath} shows that the finite sample bias is reduced when the sampling frequency increases.
	% that motivates our finite sample bias corrected estimators. By allowing for a wide range of errors in estimating the integrated volatility, which plays a key role to correct the bias, we show the greatly improved accuracy of the bias corrected estimators. In the ideal but infeasible situation that we know exactly the true volatility, the bias corrected estimator almost perfectly fits the underlying true values. In the bottom panel of , we observe that the deviations of the estimators from its stochastic limits are reduced when the data frequency increases, and this is expected from our asymptotic theory.   	
	
%	\subsubsection{Estimating second moments of noise by realized volatility and local averaging estimators in finite samples}
Next, we estimate the second moments of noise by our realized volatility estimators (RV)
and, for comparison purposes, by the local averaging estimators (LA)
proposed by~\cite{jacod2013StatisticalPropertyMMN}.
We demonstrate the importance of the finite sample bias correction to obtain accurate estimates,
and this applies to both estimators.\footnote{\label{fn:lafs}The finite sample bias corrected local averaging estimators
of the noise covariances are given by
\begin{equation*}
\widehat{R}(j)_n = \frac{1}{n}U((0,j))_n - \frac{K_n}{n}\myp{\frac{4}{3}\hat{\sigma}^2},
\end{equation*}
where $U((0,j))_n/n$ is the local averaging estimator of the $j$-th covariance without bias correction
and $\hat{\sigma}^2$ is an estimator of the integrated volatility;
see~\cite{jacod2013StatisticalPropertyMMN} for more details.
While~\cite{jacod2013StatisticalPropertyMMN} provide a finite sample bias correction when
developing their local averaging estimators of noise covariances,
they don't consider the feedback between, and unified treatment of, asymptotic and finite sample biases,
which is a key interest in this paper.}
%the realized volatility estimators and the local averaging estimators.% We will show that our realized volatility estimators perform better in a finite sample. The estimators are also quite robust to some tuning parameters. We consider the accuracy and robustness the great advantages over the local averaging method\footnote{In terms of computing time, our realize volatility estimator is more efficient. This is another advantage for practical purpose.}.
In Figure~\ref{fig:RVvsLA}, we plot the means of the autocorrelations of noise estimated by RV and LA based on $1\mathord{,}000$ simulations.
In the top panel we plot the estimators without finite sample bias correction
and we plot the estimators with finite sample bias correction in the bottom panel, %Recall that both methods require an estimate of the integrated volatility to correct the finite sample bias. In this experiment, we take an extreme step to make the bias corrections ---
in which we use the true $\sigma^2$ %instead of any approximation/estimator
to make the bias correction.
We will analyze the case in which $\sigma^2$ is estimated in the next subsection.
	
We observe that both estimators (RV and LA) perform poorly without finite sample bias correction.
In particular, the noise autocorrelations estimated by the LA estimators decay slowly and hover above 0 up to 25 lags,
from which we might conclude that the noise exhibits strong and long memory dependence,
while the underlying noise is, in fact, only weakly dependent.
However, both estimators perform well after the finite sample bias correction.
In Figure~\ref{fig:RVvsLAWithCIkn6}, we also plot the 95\% simulated confidence intervals of the two bias corrected estimators.
In terms of mean squared errors, both estimators, after bias correction, yield accurate estimates.
%of the autocorrelations of noise. %Further comparison of RV and LA can be found in Appendix~\ref{sec:TuningPara}.
We note that the results for our RV estimator are robust to the choice of $j_{n}$.
	
Figures~\ref{fig:RVpath}-\ref{fig:RVvsLAWithCIkn6} reveal that the finite sample bias correction is crucial
to obtain reliable estimates of noise moments.
The key ingredient of this correction, however, is (an estimate of) the integrated volatility.
Yet, to obtain an estimate of the integrated volatility,
we need to estimate the second moments of noise first --- whence the feedback loop of bias corrections.
This is where our two-step estimators come into play.
	
%	The important lesson we learn from the simulation study is that one should always take into account the finite sample bias to get a reliable estimate of variance or covariance of noise.  To make a finite sample bias correction, we need an estimate of the integrated volatility. Therefore the estimators of integrated volatility and estimators of variance and covariances of noise will reinforce each other: better estimates of the integrated volatility will provide more accurate finite sample bias correction, resulting better estimates of variance and covariances of noise, which in turn will provide a more accurate estimate of the integrated volatility. And this is the basic principle of our two-step estimators of the second moments of noise and the integrated volatility.

\subsection{Two-step estimators of integrated volatility
 and beyond
}\label{subsec:simu_est_IV}

In this subsection, we examine the performance of our two-step estimators of integrated volatility.
We will compare $\widehat{\rm IV}_{\rm step1}$ to $\widehat{\rm IV}_{\rm step2}$
(cf. \eqref{eq:1stStepIV} and \eqref{eq:2ndStepIV})
to assess the gained accuracy by dropping the possibly misspecified assumption of independent noise,
and compare $\widehat{\rm IV}_{n}$ to $\widehat{\rm IV}_{\rm step2}$
(cf. \eqref{eq:LS} and \eqref{eq:2ndStepIV})
to assess the accuracy gains from the unified treatment of asymptotic and finite sample biases.
We also illustrate the increased accuracy achieved by iterating one more step, yielding the estimator $\widehat{\rm IV}_{\rm step3}$.
	
In Table~\ref{tab:IV_est_delta=1s},
we report the means of our estimators, with standard deviations between parentheses,
based on $1\mathord{,}000$ simulations.\footnote{The numbers are multiplied by $10^5$.}
Throughout this subsection, $j_{n}$ is fixed at $20$.
Upon comparing the first and the third rows,
we observe the important advantage of our two-step estimators over the pre-averaging method that assumes independent noise,
since %where
 our estimators yield strongly improved accuracy.
Furthermore, a comparison between the results in the second and third rows leads to a striking conclusion:
ignoring the finite sample bias yields even more inaccuracy than ignoring the dependence in noise!
Thus one should be cautious in applying estimators without appropriate bias corrections
even with data on a 1-sec time scale.
The ``cost'' of applying our two-step estimators is the slightly larger standard deviations they induce.
The increased uncertainty is introduced by correcting the ``interlocked'' bias.
However, the reduction in bias strictly dominates the slight increase in standard deviations when noise is dependent.
Therefore, the two-step estimator has smaller mean-squared errors than the other two estimators.
The last row of Table~\ref{tab:IV_est_delta=1s} shows that another iteration of bias corrections
yields even more accurate estimates,
although the respective standard deviations increase slightly.
	%estimating the integrated volatility, which corrects the finite sample bias in the two-step estimators.
	%The advantage of the two-step estimators of the integrated volatility is quite significant.
%	

In Table~\ref{tab:IV_est_delta=0.1s}, we replicate the results of Table~\ref{tab:IV_est_delta=1s} but now with higher data frequency
(sampling at the 0.05-sec time scale).
We clearly observe the inconsistency caused by the misspecification of the dependence structure in noise
embedded in $\widehat{\rm IV}_{\rm step1}$ in the first row.
The improved accuracy achieved by the estimator $\widehat{\rm IV}_{n}$ in the second row
compared to the estimator $\widehat{\rm IV}_{\rm step1}$ in the first row confirms our asymptotic theory.
However, interestingly we observe that, even with such ultra high-frequency data, the two-step estimator $\widehat{\rm IV}_{\rm step2}$ in the third row
still performs better than the other two estimators --- with smaller biases in most cases and only slightly larger standard deviations.
In this scenario, one more iteration of bias corrections leads to little improvement.

Our results remain qualitatively the same when we increase the variance of noise.
%,
%although the estimates naturally become somewhat less accurate.
The relative improvement due to the 2-step estimator is even more pronounced in this case
and a 3-step estimator may yield further improvements.
As another robustness check, we also changed the exponentiated Ornstein-Uhlenbeck process
for the efficient price process into a Geometric Brownian Motion.
This only impacts the third digits of the estimates and the second digits of the standard deviations reported above.

To numerically ``verify'' the central limit theorem,
we plot the quantiles of the normalized estimators $\frac{n^{1/4}\myp{\widehat{\rm IV}_n-\int_{0}^{1}\sigma^2_s\diff s}}{\tau_n}$,
see \eqref{eq:stdGau},
and the bias corrected version $\frac{n^{1/4}\myp{\widehat{\rm IV}_{\rm step2}-\int_{0}^{1}\sigma^2_s\diff s}}{\tau_n}$
against standard normal quantiles in Figure~\ref{fig:CLT_QQplots}.
We observe that the limit distribution established in Theorem~\ref{thm:CLT} is clearly verified.

In Appendix~\ref{sec:SVsimu}, we provide additional Monte Carlo simulation evidence
based on \textit{stochastic volatility} models,
using realistic parameters motivated by our empirical studies,
and we find that our two-step estimator retains its advantage over the other two estimators,
$\widehat{\rm IV}_{\rm step1}$ and $\widehat{\rm IV}_{n}$.

\section{Empirical Study}\label{sec:EmpiricalStudy}

\subsection{Data description}

We analyze the NYSE TAQ transaction prices of Citigroup (trading symbol: C) over the month January 2011.
We discard all transactions before 9:30 and after 16:00.
We retain a total of $4\mathord{,}933\mathord{,}059$ transactions over 20 trading days,
thus on average 10.5 observations per second.
%While it is not uncommon practice to perform data cleaning to remove some unpleasant features of the data before estimation,
The estimation is first performed on the full sample, and then on subsamples obtained by different sampling schemes. We demonstrate how the sampling methods affect the properties of the noise,
and thus affect the estimation of the integrated volatility.
Throughout this section, the tuning parameter of the RV estimator is fixed at $j_n=30$ and $c=0.2$.
%	It is worth mentioning that we do not perform any data preprocessing to eliminate
	%The same dataset is used in the empirical study in~\cite{jacod2013StatisticalPropertyMMN}.
	
\subsection{Estimating the second moments of noise}\label{subsec:estimate_2nd_moments_noise_Citi}

We estimate the $j$-th autocovariance and autocorrelation of microstructure noise with $j=0,1,\dots,30$ by three estimators:
our realized volatility (RV) estimators in~\eqref{eq:consistent_estimate_var_U_dependent} and~\eqref{eq:gamma(j)_hat},
the local averaging (LA) estimators proposed by~\cite{jacod2013StatisticalPropertyMMN},
and the bias corrected realized volatility (BCRV) estimators in~\eqref{eq:2ndStepVarU} and~\eqref{eq:2ndStepGammaj}.
We perform the estimation over each trading day and end up with 20 estimates (of the 30 lags of autocovariances or autocorrelations) for each estimator.
In Figure~\ref{fig:RV_LA_2step_fulldata} we plot the average of the 20 estimates (over the month)
as well as the approximated confidence intervals that are two sample standard deviations away from the mean.
	
% To present the estimation results, we first compute the mean and standard deviation of the 20 estimates. Then we plot the mean and in .

We observe that the three estimators yield quite close %and accurate
estimates by virtue of the high data frequency.
Noise in this sample tends to be positively autocorrelated --- %the autocorrelations are positive up to 10 lags,
with the BCRV estimators yielding the fastest decay.
%Empirically
This is consistent with the finding that the arrivals of buy and sell orders are positively autocorrelated, see~\cite{hasbrouck1987order}.
This corresponds to the trading practice that informed traders split their orders over (a short period of) time and trade on one side of the market, rendering continuation in their orders.
	
We emphasize that
the finite sample bias can be much more pronounced %is not in general minor
than what we observe in Figure~\ref{fig:RV_LA_2step_fulldata},
even if we perform estimation on a full transaction data sample.
In Appendix~\ref{sec:GE_Empirical}, we analyze the transaction prices of General Electric (GE)
and show that, when the data frequency is very high,
the finite sample bias correction is particularly important % essential
when the noise-to-signal ratio is very small
(recall Remark~\ref{rmk:why_correct_bias}).

\subsection{Estimating the integrated volatility}\label{subsec:IV_Citi}

Turning to the estimation of the integrated volatility,
we mimic our simulation experiments and study three estimators: $\widehat{\rm IV}_{\rm step1}$, $\widehat{\rm IV}_{n}$, and $\widehat{\rm IV}_{\rm step2}$.
In the top panel of Figure~\ref{fig:CitiIVsFullData}, we plot the three estimators of the integrated volatility for each trading day.
We note that the estimator $\widehat{\rm IV}_{n}$ and the two-step estimator $\widehat{\rm IV}_{\rm step2}$ yield quite close results.
However, the estimator $\widehat{\rm IV}_{\rm step1}$, which ignores the dependence in noise, yields very different estimates,
and the differences are one-sided %persistent
--- $\widehat{\rm IV}_{\rm step1}$ yields higher estimates over each trading day.
Moreover, the differences are statistically significant by virtue of Theorem~\ref{thm:CLT} ---
19 out of the 20 estimates fall outside of the 95\% confidence intervals,
as the bottom panel of Figure~\ref{fig:CitiIVsFullData} reveals.
	
%	By virtue of Theorem~\ref{thm:CLT} we can
	
%		To examine the significance of the differences, we plot the asymptotic confidence intervals of $\widehat{\rm IV}_{n}$ based on the limit distribution developed in Theorem~\ref{thm:CLT}. We find that the differences are statistically significant

\subsection{Decaying rate of autocorrelation}

Figure~\ref{fig:RV_LA_2step_fulldata} shows that the positive autocorrelations of noise drop to zero rapidly.
To assess the rate of decay, we perform a logarithmic transformation of the autocorrelations estimated by BCRV.\footnote{We restrict attention to the lags up to $j=15$. The logarithmic autocorrelations at higher lags are very volatile since the autocorrelations are close to zero.}
In the top panel of Figure~\ref{fig:CitiLogCors}, we plot the logarithmic autocorrelations for each trading day,
revealing clear support for a linear trend.
To better visualize the linear relationship, we plot the means of the logarithmic autocorrelations over the 20 trading days
and fit a regression line to it; see the bottom panel of Figure~\ref{fig:CitiLogCors}.
The nearly perfect fit indicates that the logarithmic autocorrelation is approximately a linear function of the number of lags,
i.e., the autocorrelation function is decaying at an exponential rate.\footnote{The autocorrelation decay rate would be slower without unified treatment of the bias corrections,
which may explain the %relatively strong
polynomial dependence in noise found in~\cite{jacod2013StatisticalPropertyMMN}
and questioned by these authors themselves.}
	
%	\subsection{Autocorrelation function of log-returns}
%	Recall that if noise is independent, the log-returns will exhibit an MA(1) pattern, and this is commonly perceived as the support of i.i.d. noise. Obviously noise is not independent in this empirical study. It is interesting to examine the patterns of the log-returns.
%	
%	In Figure~\ref{fig:CitiACFLogRet.tex}, we plot the sample autocorrelation function of Citi log-returns for January, 2011.
%		\begin{figure}[h]
%			\centering
%			\input{CitiACFLogRet.tex}
%			\caption{Autocorrelation function of Citi log-returns in transaction time for January, 2011.}
%			\label{fig:CitiACFLogRet.tex}
%		\end{figure}
		
%\subsection{Regular sampling and tick time sampling}
\subsection{Robustness check ---  estimation under other sampling schemes}

It is interesting to analyze how our estimators perform when the data is sampled at different time scales.
In this section, we consider two alternative (sub)sampling schemes:
regular time sampling and tick time sampling (recall Remark~\ref{rmk:Sampling_Schemes} for details on the sampling schemes).

\subsubsection{Regular time sampling}\label{subsubsec:regular_sampling}
%we perform estimation in a regularly spaced subsample of the transaction data.% In Appendix~\ref{sec:TickTimeEmpirical}, we report the estimation results based on tick time sampling.
%We consider two popular sampling schemes: regular sampling and tick time sampling.
	
The prices in this sample are recorded on a 1-second time scale.
If there were multiple prices in a second, we select the first one; and we do not record a price if there is no transaction in a second.
We end up with $21\mathord{,}691$ observations on average per trading day. %This subsample accounts for less than 10\% of the complete sample.
%	 We already investigate this problem by simulation studies, in which we show that without bias correction the LA and RV estimators may lead to misleading conclusions about the dependence of noise when the sampling frequency is not that extremely high.
%	
%	We consider a subsample of the Citi transaction data. In particular, we record prices at 1-second time scale. Thus, on average we neglect 9 transactions out of 10. Since noise is assumed to be dependent in transaction time, we expect noise to be independent in this subsample.
Figure~\ref{fig:RV_LA_2step_seconddata} is analogous to Figure~\ref{fig:RV_LA_2step_fulldata}.
The three estimators, RV, LA, and BCRV, now produce very different patterns.
Both the RV and LA estimators indicate that noise is strongly autocorrelated in this subsample, even stronger than in the original full sample.
This would be counterintuitive since we eliminate more than 90\% of the full sample in a fairly random way ---
the elimination should if anything have weakened the serial dependence of noise in the remaining sample.
However, the estimates by BCRV reveal that in fact the noise is approximately uncorrelated ---
it is the finite sample bias that makes the autocorrelations of noise seem strong and persistent
if not taken into account.
	
%The finite sample bias in estimating the second moments of noise also affects the estimation of the integrated volatility. However, our two-step estimator of the integrated volatility is quite robust.

If the noise is close to being independent,
$\widehat{\rm IV}_{\rm step1}$, which assumes i.i.d. noise, would be a valid estimator of the integrated volatility.
An alternative estimator, e.g., $\widehat{\rm IV}_{\rm step2}$ or $\widehat{\rm IV}_{n}$,
would be robust if it delivered similar estimates.
	 %Turning to the estimation of the integrated volatility, we still employ the three estimators $\widehat{\rm IV}_{\rm step1},\widehat{\rm IV}_{\rm step2}$ and $\widehat{\rm IV}_{n}$ to estimate the integrated volatility in this subsample.
In the top panel of Figure~\ref{fig:CitiIVsSecondData}, we observe that $\widehat{\rm IV}_{\rm step1}$ and $\widehat{\rm IV}_{\rm step2}$
yield virtually identical estimates.
The estimator $\widehat{\rm IV}_{n}$, however, yields lower estimates on each trading day.
If we rely on the asymptotic theory only, we would conclude that the estimates by $\widehat{\rm IV}_{\rm step1}$ (or $\widehat{\rm IV}_{\rm step2}$)
are significantly higher than those by $\widehat{\rm IV}_{n}$ in the statistical sense ---
all the 20 estimates by $\widehat{\rm IV}_{\rm step1}$ (or $\widehat{\rm IV}_{\rm step2}$) are
outside the 95\% asymptotic confidence intervals of $\widehat{\rm IV}_{n}$, as we observe from the bottom panel of Figure~\ref{fig:CitiIVsSecondData}.
We conclude that Figures~\ref{fig:CitiIVsFullData} and~\ref{fig:CitiIVsSecondData} jointly reveal the importance of our multi-step approach. %$\widehat{\rm IV}_{\rm step2}$.
Indeed, $\widehat{\rm IV}_{\rm step1}$ shows unreliable behaviour %``fails''
 in Figure~\ref{fig:CitiIVsFullData},
while $\widehat{\rm IV}_{n}$ shows unreliable behaviour %``fails''
 in Figure~\ref{fig:CitiIVsSecondData}.

\subsubsection{Tick time sampling}		
\label{subsec:tick_sampling}
%In this section, we report our estimation results based on a subsample of the transaction prices of Citigroup for January, 2011.
In a tick time sample,  prices are collected with each price change, i.e., all zero returns are suppressed, see, e.g.,~\cite{da2017moving},~\cite{Ait-Sahalia2011DependentNoise},~\cite{griffin2008sampling},~\cite{kalnina2011subsampling} and~\cite{zhou1996high}.
For the Citigroup transaction data, 70\% of the returns are zero.
The corresponding average number of prices per second in our tick time sample is 3.2.
Figure~\ref{fig:RV_LA_2step_tickdata} shows that the microstructure noise has a different dependence pattern in the tick time sample ---
its autocorrelation function is alternating. Masked by alternating noise, the observed returns at tick time have a similar pattern; see~\cite{Ait-Sahalia2011DependentNoise} and~\cite{griffin2008sampling}.
This dependence structure of noise is perceived to be due to the discreteness of price changes,
irrespective of the distributional features of noise in the original transactions or quotes data.
%\footnote{To understand this alternating pattern,
%let's assume that the efficient price process is a Brownian motion.
%In a short period of time, the efficient price process changes only little.
%The local variation of the observed price is mainly attributable to microstructure noise, which is also centered at 0.
%Suppose that at tick times $t_i$ and $t_{i+1}$, the efficient price of a stock is fixed at $1.652$.
%Now assume that at time $t_i$ the noise is positive and equal to $0.003$.
%With a minimal tick size of $0.01$ the rounded observed price at time $t_i$ is $1.66$.
%At tick time $t_{i+1}$ conditional on price changes, the most likely price is either $1.67$ or $1.65$ (jumping to other prices requires an extreme positive or negative noise, which occurs with much smaller probability), which occurs with probability $\prob{0.013\leq U_{t_{i+1}}< 0.023}$ and $\prob{-0.007\leq U_{t_{i+1}} <0.003}$, respectively.
%The latter probability usually dominates the former as the tick size is much larger than the (conditional) variance of noise, irrespective of the autocorrelation pattern of noise.
%This explains the alternating pattern of the autocorrelation function of noise in tick time.
%%In Section~\ref{sec:TickTimeSimu}, we provide additional Monte Carlo simulation evidence of the dependence pattern of noise under tick time sampling.
%}

Interestingly, Figure~\ref{fig:CitiIVsTickData} shows that the three estimators of the integrated volatility,
$\widehat{\rm IV}_{\rm step1}$, $\widehat{\rm IV}_{\rm step2}$, and $\widehat{\rm IV}_{n}$, remain close.
It is not surprising to see a close fit of $\widehat{\rm IV}_{\rm step2}$ and $\widehat{\rm IV}_n$ since the data frequency is still quite high.
By contrast, it is not directly obvious why $\widehat{\rm IV}_{\rm step1}$ and $ \widehat{\rm IV}_{\rm step2}$ deliver almost identical estimates,
given the fact that the dependence of noise in this tick time sample is drastically different from i.i.d. noise.
However, a clue is provided by the observation that negatively autocorrelated noise has less impact on the estimation of the integrated volatility,
as the high-order alternating autocovariances partially cancel out, thus
contributing %contribute
less to the asymptotic bias $\sigma^2_U$.\footnote{For a tractable analysis, one may consider AR(1) noise processes.
Let $\rho\in(0,1)$ be the absolute value of the AR(1) coefficient.
When the noise is positively autocorrelated, the asymptotic bias $\sigma^2_U$ corrected by $\widehat{\rm IV}_{\rm step1}$ and $\widehat{\rm IV}_{\rm step2}$ is  $(1-\rho)\var{U}$ and $\frac{1+\rho}{1-\rho}\var{U}$, respectively;
when the noise is negatively autocorrelated, it is $(1+\rho)\var{U}$ and $\frac{1-\rho}{1+\rho}\var{U}$.
Consider $\rho = 0.8$.
Then, $(1-\rho)\var{U} = 0.2\var{U}$ and $\frac{1+\rho}{1-\rho}\var{U} = 9\var{U}$
while $(1+\rho)\var{U} = 1.8\var{U}$ and $\frac{1-\rho}{1+\rho}\var{U} = \frac{1}{9}\var{U}$.
Therefore, the difference in the asymptotic bias is smaller when the noise is negatively autocorrelated;
consequently, the integrated volatility estimates by $\widehat{\rm IV}_{\rm step1}$ and $\widehat{\rm IV}_{\rm step2}$ are close. %
See also Tables~\ref{tab:IV_est_delta=1s} and~\ref{tab:IV_est_delta=0.1s} in our simulation study.}

%	For misspecified i.i.d. noise, positively (with AR(1) coefficient $\rho$) and negatively (with AR(1) coefficient $-\rho$) autocorrelated noise, the asymptotic bias $\sigma^2_U$ (recall~\eqref{eq:sigma2_U}) are $\var{U}-\gamma(1),\frac{1+\rho}{1-\rho}\var{U}$ and $\frac{1-\rho}{1+\rho}\var{U}$, respectively. Compare to the simulation study, we find that $\rho$ should be close to 1 to fit the empirical data. Hence we let $\rho=0.8$ in the sequel. If noise is positively autocorrelated, the asymptotic bias (generated by noise) corrected by $\widehat{\rm IV}_{\rm step2}$ and $\widehat{\rm IV}_{\rm step1}$ are $9\var{U}$ and $0.2\var{U}$; when noise is negatively autocorrelated, the two terms become $\frac{1}{9}\var{U}$ and $1.8\var{U}$, respectively.
%Therefore we get an interesting conclusion that is different from~\cite{griffin2008sampling}:  popular de-noise methods\footnote{\cite{Ait-Sahalia2011DependentNoise} show that TSRV and MSRV are robust in tick time samples.} that require time-independent noise are still quite robust if the integrated volatility is estimated in tick time samples, though noise obviously deviates from the independence assumption.	

\subsection{Economic interpretation and empirical implication}	

The dependence structure of microstructure noise is complex,
and depends on the sampling scheme.
In an original transaction data sample,
noise is likely to be positively autocorrelated as a result of various trading practices that entail continuation in order flows.
The dependence of noise can be reduced by sampling sparsely, say, every few (or more) seconds as we show in Section~\ref{subsubsec:regular_sampling};
noise is close to independent in such sparse subsamples. 	
If, however, we remove all zero returns, thus sample in tick time, noise typically exhibits an alternating autocorrelogram.

Microstructure theories can provide some intuitive economic interpretations of the dynamic properties of microstructure noise recovered in this paper. The positive autocorrelation function displayed in Figure~\ref{fig:RV_LA_2step_fulldata} is consistent with the findings in~\cite{hasbrouck1987order},~\cite{choi1988estimation} and~\cite{huang1997components} that explicitly model the probability of order reversal $\pi$ (or order continuation by $1-\pi$),\footnote{It is the probability that a buy (sell) order follows another sell (buy) order.} so that the deviation of transaction prices from fundamentals becomes an AR(1) process.
Fitting the autocorrelation function recovered by BCRV in Figure~\ref{fig:RV_LA_2step_fulldata} to that of an AR(1) model,
we obtain an estimate of the AR(1) coefficient equal to $\hat{\rho} = 0.75$
and the probability of order continuation is $1-\hat{\pi} = (1+\hat{\rho})/2 = 0.87$.
That is, the estimated probability that a buy (or sell) order follows another buy (or sell) order is 0.87.
In view of the extensive empirical results in~\cite{huang1997components} (see Table 5 therein), this is a reasonable estimate.

One possible interpretation of the positively autocorrelated order flows is that a large \emph{order} is often executed as a series of smaller \emph{trades} to reduce the price impact, or conducted against multiple \emph{trades} from stale limit orders.
However, such positive autocorrelation contradicts the prediction of inventory models,
in which market makers induce negatively autocorrelated order flows to stabilize %equilibrate
inventories; see~\cite{ho1981optimal}.
Consequently, according to inventory models the probability of order reversal would be $\pi>0.5$.
One remedy, suggested by~\cite{huang1997components},
is to collapse multiple \emph{trades} at the same price into one \emph{order},
which is exactly the tick time sampling scheme considered in Section~\ref{subsec:tick_sampling}.
Exploiting the estimates by BCRV presented in Figure~\ref{fig:RV_LA_2step_tickdata},
we obtain an estimate of the probability of order reversal equal to $\hat{\pi} = 0.84$,
which is very close to the average probability $0.87$ in~\cite{huang1997components}.
We emphasize that we recover these probabilities without any prior knowledge or estimates of the order flows.

The dependence structure of microstructure noise,
and hence the choice of sampling scheme,
affect the estimation of integrated volatility.
Popular de-noise methods that assume i.i.d. noise work reasonably well with relatively sparse regular time samples
or tick time samples.
However, this discards a substantial amount of the original transaction data.\footnote{To obtain the Citigroup tick time sample
and the 1-second regular time sample, we delete roughly 70\% and 90\% of the original transaction data, respectively.}
Instead, we can directly estimate the integrated volatility from the original transaction data
using our estimators that explicitly take the potential dependence in noise into account.

In our empirical study, we have also illustrated that bias corrections play
an essential role in recovering the statistical properties of noise
and in estimating the integrated volatility. %when various sampling schemes are used.
Our two-step estimators are specifically designed to conduct such bias corrections,
and have the advantage of being robust to different sampling schemes and frequencies.

\section{Conclusion}\label{sec:conclusion}

In high-frequency financial data
the efficient price is contaminated by microstructure noise,
which is usually assumed to be independently and identically distributed.
This simple distributional assumption is challenged by both microeconomic financial models and various %complex
empirical facts.
In this paper, we deviate from the i.i.d. assumption by allowing noise to be dependent in a general setting.
We then develop econometric tools to recover the dynamic properties of microstructure noise and design improved approaches for %study
 the estimation of the integrated volatility.

This paper makes four contributions.
First, it develops nonparametric estimators of the second moments of microstructure noise in a general setting.
Second, it provides a robust estimator of the integrated volatility,
without assuming serially independent noise.
Third, it reveals the importance of both asymptotic and finite sample bias analysis and develops simple
and readily implementable two-step %and three-step
estimators that are robust to the sampling frequency.
Empirically, it characterizes the dependence structures of noise in several popular sampling schemes and provides %derives
intuitive economic interpretations; it also investigates the impact of the dynamic properties of microstructure noise on integrated volatility estimation.

This paper thus introduces a robust and accurate method to effectively separate the two components of high-frequency financial data
--- the efficient price and microstructure noise.
The robustness lies in its flexibility to accommodate rich dependence structures of microstructure noise motivated by various economic models and trading practices, whereas the accuracy is achieved by the finite sample refinement.
As a result, we discover dynamic properties of microstructure noise consistent with microstructure theory
and obtain accurate volatility estimators that are robust to sampling schemes.

\section*{Acknowledgements}

We are very grateful
to Yacine A\"it-Sahalia, Federico Bandi, Peter Boswijk, Peter Reinhard Hansen, Siem Jan Koopman, Oliver Linton, and Xiye Yang
%and conference and seminar participants at the
%Conference on Financial Econometrics \& Empirical Asset Pricing in Lancester,
%the 10th International Conference on Computational and Financial Econometrics in Sevilla,
%the Tinbergen Institute, and the University of Amsterdam
for their comments and discussions on earlier versions of this paper.
%{\tt Matlab} code to implement the estimators developed in this paper
%is available from the authors upon request.
This research was funded in part by the Netherlands Organization for Scientific Research under grant NWO VIDI 2009 (Laeven).

%\clearpage
\bibliographystyle{econometrics}
\bibliography{reference}

\newpage

\section*{Tables and Figures}

\begin{table}[htb]
\centering
\begin{tabular}{l|ccccc}\hline\hline
	$\rho$                             &-0.7         &-0.3         &0            &0.3           & 0.7      \\\hline
%	$\widehat{\rm IV}_{\rm step1}$     &5.37 (0.45)  &5.60 (0.45)  &5.87 (0.44)  &6.20 (0.47)   & 7.36 (0.55)     \\
$\widehat{\rm IV}_{\rm step1}$  &5.53 (0.46)	  &5.74 (0.46)	 &5.98 (0.47)	   &6.39 (0.49)	&7.57 (0.56)\\
%	$\widehat{\rm IV}_{n}$        &2.89 (0.39)  &2.88 (0.40)  &2.90 (0.39)  &2.86 (0.42)   & 2.71 (0.49)        \\
$\widehat{\rm IV}_{n}$        &3.04 (0.40)  &3.02 (0.40)  &3.02 (0.41)  &3.04 (0.43)   & 2.91 (0.50)        \\
%	$\widehat{\rm IV}_{\rm step2}$     &5.56 (0.61)  &5.66 (0.62)  &5.82 (0.59)  &5.94 (0.64)   & 6.37 (0.75) \\\hline\hline	
$\widehat{\rm IV}_{\rm step2}$     &5.79 (0.61)  &5.87 (0.63)  &5.99 (0.63)  &6.23 (0.67)   & 6.67 (0.76) \\
$\widehat{\rm IV}_{\rm step3}$     &5.92 (0.70)  &5.93 (0.72)  &6.00 (0.72)  &6.13 (0.76)   & 6.22 (0.87) \\\hline\hline
\end{tabular}
%\begin{tabular}{l|ccccc}\hline\hline
%	$\rho$                             &-0.7         &-0.3         &0            &0.3           & 0.7      \\\hline
%	$\widehat{\rm IV}_{\rm step1}$     &5.38 (0.44)  &5.59 (0.46)  &5.86 (0.47)  &6.22 (0.44)   & 7.34 (0.54)     \\
%	$\widehat{\rm IV}_{n}$        &5.10 (0.44)  &4.88 (0.45)  &4.91 (0.46)  &4.88 (0.44)   & 4.89 (0.53)        \\
%	$\widehat{\rm IV}_{\rm step2}$     &5.96 (0.51)  &5.78 (0.53)  &5.85 (0.53)  &5.88 (0.51)   & 6.07 (0.61) \\\hline\hline	
%\end{tabular}
\caption{Estimation of the integrated volatility.
The numbers represent the means of the four estimators of integrated volatility,
$\widehat{\rm IV}_{\rm step1}$, $\widehat{\rm IV}_{n}$, $\widehat{\rm IV}_{\rm step2}$ and $\widehat{\rm IV}_{\rm step3}$,
based on $1\mathord{,}000$ simulations
with standard deviations between parentheses.
The true value of the integrated volatility is given by $\sigma^2 = 6\times 10^{-5}$.
All numbers in the table are multiplied by $10^5$.
We take $\Delta=1$ sec and the number of observations is $23\mathord{,}400$.
The tuning parameter of the RV estimator is $j_n=20$ and $i_n=10$.}
\label{tab:IV_est_delta=1s}
\end{table}

\newpage

\begin{table}[ht]
\centering
\begin{tabular}{l|ccccc}\hline\hline
	$\rho$                             &-0.7         &-0.3         &0            &0.3           & 0.7      \\\hline
	$\widehat{\rm IV}_{\rm step1}$     &5.52 (0.22)  &5.76 (0.21)  &6.00 (0.22)  &6.37 (0.23)   & 7.71 (0.27)     \\
	$\widehat{\rm IV}_{n}$             &5.86 (0.22)  &5.85 (0.21)  &5.85 (0.22)  &5.84 (0.23)   & 5.88(0.27)      \\
	$\widehat{\rm IV}_{\rm step2}$     &5.99 (0.23)  &6.00 (0.22)  &6.00 (0.23)  &6.00 (0.24)   & 6.07 (0.27) \\
	$\widehat{\rm IV}_{\rm step3}$     &6.00 (0.23)  &6.00 (0.22)  &6.00 (0.23)  &5.99 (0.24)   & 6.03 (0.27) \\\hline\hline
\end{tabular}
\caption{Estimation of the integrated volatility with ultra high-frequency data.
The numbers represent the means of the four estimators of integrated volatility,
$\widehat{\rm IV}_{\rm step1}$, $\widehat{\rm IV}_{n}$, $\widehat{\rm IV}_{\rm step2}$ and $\widehat{\rm IV}_{\rm step3}$,
based on $1\mathord{,}000$ simulations
with standard deviations between parentheses.
The true value of the integrated volatility is given by $\sigma^2 = 6\times 10^{-5}$.
All numbers in the table are multiplied by $10^5$.
Different from Table \ref{tab:IV_est_delta=1s}, we now take $\Delta=0.05$ sec and the number of observations is $468\mathord{,}000$.
The tuning parameter of the RV estimator is $j_n=20$ and $i_n=10$.}
\label{tab:IV_est_delta=0.1s}
\end{table}

\newpage

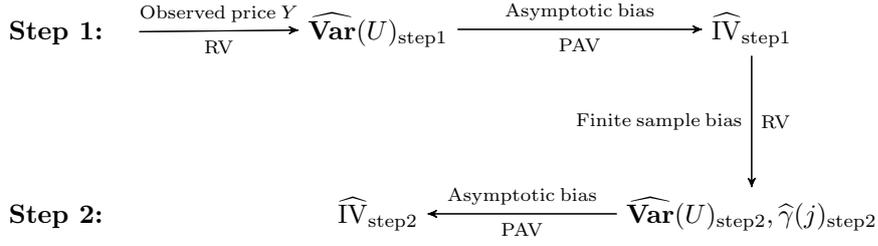
\begin{figure}[ht]
		\begin{tikzpicture}
		\matrix (m) [matrix of math nodes, row sep=5em, column sep=6em]
		{ \textbf{Step 1:}\quad  & \widehat{\mathbf{Var}}(U)_{\rm step1}   & \widehat{\rm IV}_{\rm step1}  \\
			\textbf{Step 2:}\quad  & \widehat{\rm IV}_{\rm step2} &\widehat{\mathbf{Var}}(U)_{\rm step2},\widehat{\gamma}(j)_{\rm step2} \\};
		{ [start chain] %\chainin (m-1-1);
			\chainin (m-1-1);
			\chainin (m-1-2) [join={node[above,labeled] {{\rm Observed\:price\:}Y} node[below,labeled] {\rm RV}}];
			\chainin (m-1-3) [join={node[above,labeled] {\rm Asymptotic \:bias} node[below,labeled] {\rm PAV}}];
			{  	 \chainin (m-2-3)
				[join={node[left,labeled] {\rm Finite\:sample \:bias} node[right,labeled] {\rm RV}}];
			}
			{ [start chain]
				\chainin (m-2-2) [join={node[below,labeled] {\rm PAV} node[above,labeled] {\rm Asymptotic \:bias}}];
			}}
			\end{tikzpicture}
\caption{Illustration of the two-step estimators.
In the first step, we use realized volatility (RV) to obtain an estimator of the variance of (possibly misspecified) i.i.d. noise, $\widehat{\mathbf{Var}}(U)_{\rm step1}$.
Next, this estimator is used to correct the asymptotic bias in the probability limit of the pre-averaging estimator (PAV)
to derive the first-step estimator of the integrated volatility, $\widehat{\rm IV}_{\rm step1}$.
In the second step, we use $\widehat{\rm IV}_{\rm step1}$ to obtain finite sample bias corrected estimators of the variance and covariances of noise, $\widehat{\mathbf{Var}}(U)_{\rm step2}$ and $\widehat{\gamma}(j)_{\rm step2}$,
which are finally used to remove the asymptotic bias in PAV,
leading to the second-step integrated volatility estimator, $\widehat{\rm IV}_{\rm step2}$.}
\label{fig:description_two_step_estimator}
\end{figure}

\newpage

\begin{figure}
\centering
% This file was created by matlab2tikz.
% Minimal pgfplots version: 1.3
%
%The latest updates can be retrieved from
%  http://www.mathworks.com/matlabcentral/fileexchange/22022-matlab2tikz
%where you can also make suggestions and rate matlab2tikz.
%
\definecolor{mycolor1}{rgb}{1.00000,0.00000,1.00000}%
\begin{tikzpicture}

\begin{axis}[%
width=4.5in,
height=2.5in,
xlabel = Lags,
at={(0.739556in,4.491039in)},
scale only axis,
separate axis lines,
every outer x axis line/.append style={black},
every x tick label/.append style={font=\color{black}},
xmin=0,
xmax=30,
every outer y axis line/.append style={black},
every y tick label/.append style={font=\color{black}},
ymin=4e-08,
ymax=1.4e-07,
axis x line*=bottom,
axis y line*=left,
legend style={at={(0.05,1)},anchor=north west,legend cell align=left,align=left,draw=black}
]
\addplot [color=green,only marks,mark=asterisk,mark options={solid}]
  table[row sep=crcr]{%
1	1.021e-07\\
2	5.093e-08\\
3	8.6749e-08\\
4	6.16757e-08\\
5	7.922701e-08\\
6	6.6941093e-08\\
7	7.55412349e-08\\
8	6.952113557e-08\\
9	7.3735205101e-08\\
10	7.07853564293e-08\\
11	7.285025049949e-08\\
12	7.1404824650357e-08\\
13	7.24166227447501e-08\\
14	7.17083640786749e-08\\
15	7.22041451449276e-08\\
16	7.18570983985507e-08\\
17	7.21000311210145e-08\\
18	7.19299782152899e-08\\
19	7.20490152492971e-08\\
20	7.1965689325492e-08\\
21	7.20240174721556e-08\\
22	7.19831877694911e-08\\
23	7.20117685613562e-08\\
24	7.19917620070506e-08\\
25	7.20057665950646e-08\\
26	7.19959633834548e-08\\
27	7.20028256315816e-08\\
28	7.19980220578929e-08\\
29	7.2001384559475e-08\\
};
\addlegendentry{True};

\addplot [color=black,solid]
  table[row sep=crcr]{%
1	1.06644735209236e-07\\
2	5.49035328465899e-08\\
3	9.54339830853775e-08\\
4	7.0174080401521e-08\\
5	9.1778554291095e-08\\
6	8.03240638755558e-08\\
7	9.19017215137769e-08\\
8	8.69997250643526e-08\\
9	9.39207419346189e-08\\
10	9.29589932593728e-08\\
11	9.66184326044972e-08\\
12	9.72961280430312e-08\\
13	1.00195520445408e-07\\
14	1.01078008091769e-07\\
15	1.03968966325787e-07\\
16	1.05580118775226e-07\\
17	1.06465106279591e-07\\
18	1.10747189205041e-07\\
19	1.10807992382308e-07\\
20	1.1415828148479e-07\\
21	1.1544662240055e-07\\
22	1.16999283196272e-07\\
23	1.19782354027362e-07\\
24	1.20791741948996e-07\\
25	1.23299498050732e-07\\
26	1.25105452499479e-07\\
27	1.27632382488201e-07\\
28	1.28525131738784e-07\\
29	1.3120879460618e-07\\
};
\addlegendentry{RV};

\addplot [color=red,solid]
  table[row sep=crcr]{%
1	1.04505331674942e-07\\
2	5.06247257780006e-08\\
3	8.90157724824936e-08\\
4	6.16164662643424e-08\\
5	8.10815366196218e-08\\
6	6.74876426697879e-08\\
7	7.69258967737145e-08\\
8	6.98844967899955e-08\\
9	7.46661101259671e-08\\
10	7.15649579164264e-08\\
11	7.30849937272562e-08\\
12	7.16232856314955e-08\\
13	7.23832744995781e-08\\
14	7.11263586116436e-08\\
15	7.18779133113678e-08\\
16	7.13496622265122e-08\\
17	7.00952461965817e-08\\
18	7.22379255877376e-08\\
19	7.01593252307097e-08\\
20	7.13702107988974e-08\\
21	7.05191481803626e-08\\
22	6.99324054417903e-08\\
23	7.05760727385856e-08\\
24	6.94460571259251e-08\\
25	6.98144096933657e-08\\
26	6.94809606078182e-08\\
27	6.98684870622456e-08\\
28	6.86218327785341e-08\\
29	6.91660921116358e-08\\
};
\addlegendentry{RVM};

\addplot [color=blue,solid]
  table[row sep=crcr]{%
1	1.04077450968083e-07\\
2	4.97689643642827e-08\\
3	8.77321303619168e-08\\
4	5.99049434369067e-08\\
5	7.89421330853272e-08\\
6	6.49203584286344e-08\\
7	7.3930731825702e-08\\
8	6.64614511351241e-08\\
9	7.08151837642368e-08\\
10	6.72861508478371e-08\\
11	6.8378305951808e-08\\
12	6.64887171491884e-08\\
13	6.6820825310412e-08\\
14	6.51360287156186e-08\\
15	6.54597027084839e-08\\
16	6.45035709167694e-08\\
17	6.282127417998e-08\\
18	6.45360728642769e-08\\
19	6.202959180039e-08\\
20	6.28125966617188e-08\\
21	6.15336533363251e-08\\
22	6.05190298908939e-08\\
23	6.07348164808303e-08\\
24	5.91769201613108e-08\\
25	5.91173920218925e-08\\
26	5.83560622294861e-08\\
27	5.83157079770545e-08\\
28	5.66411729864841e-08\\
29	5.67575516127269e-08\\
};
\addlegendentry{RVH};

\addplot [color=mycolor1,solid]
  table[row sep=crcr]{%
1	1.04933212381801e-07\\
2	5.14804871917184e-08\\
3	9.02994146030704e-08\\
4	6.33279890917781e-08\\
5	8.32209401539165e-08\\
6	7.00549269109415e-08\\
7	7.9921061721727e-08\\
8	7.33075424448669e-08\\
9	7.85170364876975e-08\\
10	7.58437649850157e-08\\
11	7.77916815027044e-08\\
12	7.67578541138027e-08\\
13	7.79457236887441e-08\\
14	7.71166885076686e-08\\
15	7.82961239142518e-08\\
16	7.8195753536255e-08\\
17	7.73692182131835e-08\\
18	7.99397783111983e-08\\
19	7.82890586610293e-08\\
20	7.99278249360759e-08\\
21	7.95046430244001e-08\\
22	7.93457809926867e-08\\
23	8.04173289963409e-08\\
24	7.97151940905393e-08\\
25	8.05114273648389e-08\\
26	8.06058589861503e-08\\
27	8.14212661474366e-08\\
28	8.0602492570584e-08\\
29	8.15746326105447e-08\\
};
\addlegendentry{RVL};

\end{axis}
\end{tikzpicture}%
\caption{Realized volatility estimators against the number of lags $j$,
based on a single simulated sample,
without and with finite sample bias correction, cf. \eqref{eq:Pconverge_jth_RV} and \eqref{eq:RV_SSBC}.
Here, RV: $\widehat{\RV{Y,Y}}_n(j)$;
RVL: $\widehat{\RV{Y,Y}}_n(j)-\frac{0.8\sigma^2j}{2(n-j+1)}$;
RVM: $\widehat{\RV{Y,Y}}_n(j)-\frac{\sigma^2j}{2(n-j+1)}$; and
RVH: $\widehat{\RV{Y,Y}}_n(j)-\frac{1.2\sigma^2j}{2(n-j+1)}$.
We take $\Delta = 1$ sec, the number of observations is $23\mathord{,}400$, and $\rho=-0.7$.
The designation ``True'' corresponds to the stochastic limit $\var{U}-\gamma(j)$.}% Top panel: $\Delta=1s$; bottom panel $\Delta=0.1s$.}
\label{fig:RVpath}
\end{figure}
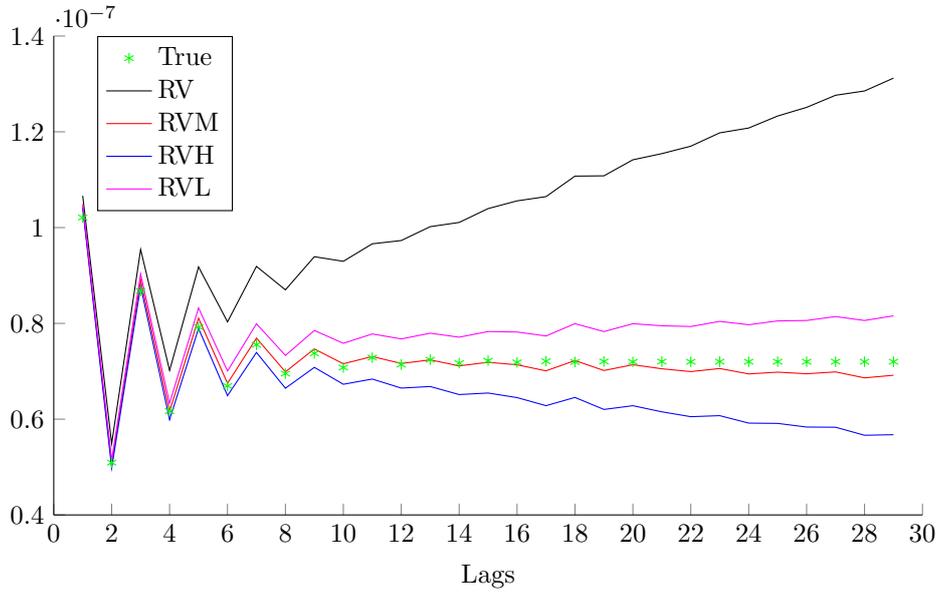

\newpage
	
\begin{sidewaysfigure}[p]
\centering
\input{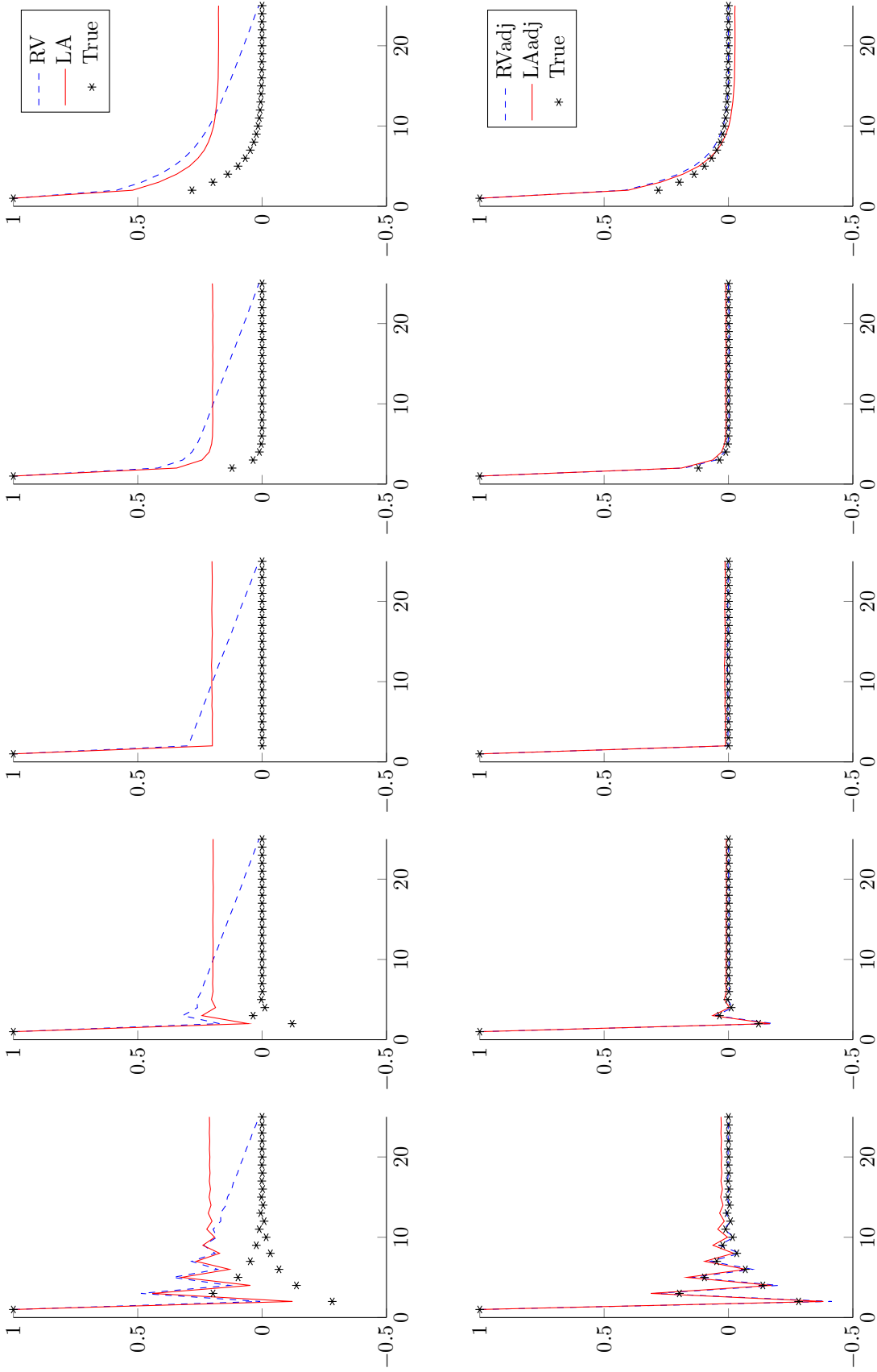}
\caption{Realized volatility (RV) and local averaging (LA) estimators of the autocorrelations of noise against the number of lags $j$,
averaged over $1\mathord{,}000$ simulated samples.
Top panel: RV and LA estimators without finite sample bias corrections.
Bottom panel: RV and LA estimators with finite sample bias corrections.
We take $\Delta=1$ sec and the number of observations is $23\mathord{,}400$.
The tuning parameters of the RV and LA estimators are $j_n=25$ and $K_n=6$, respectively.
The AR(1) coefficient $\rho$ of the noise process is fixed in each column.
From left to right, the values of $\rho$ for each column are given by $-0.7, -0.3, 0, 0.3$, and $0.7$.}
\label{fig:RVvsLA}
\end{sidewaysfigure}

\newpage

\begin{sidewaysfigure}[p]
\centering
\input{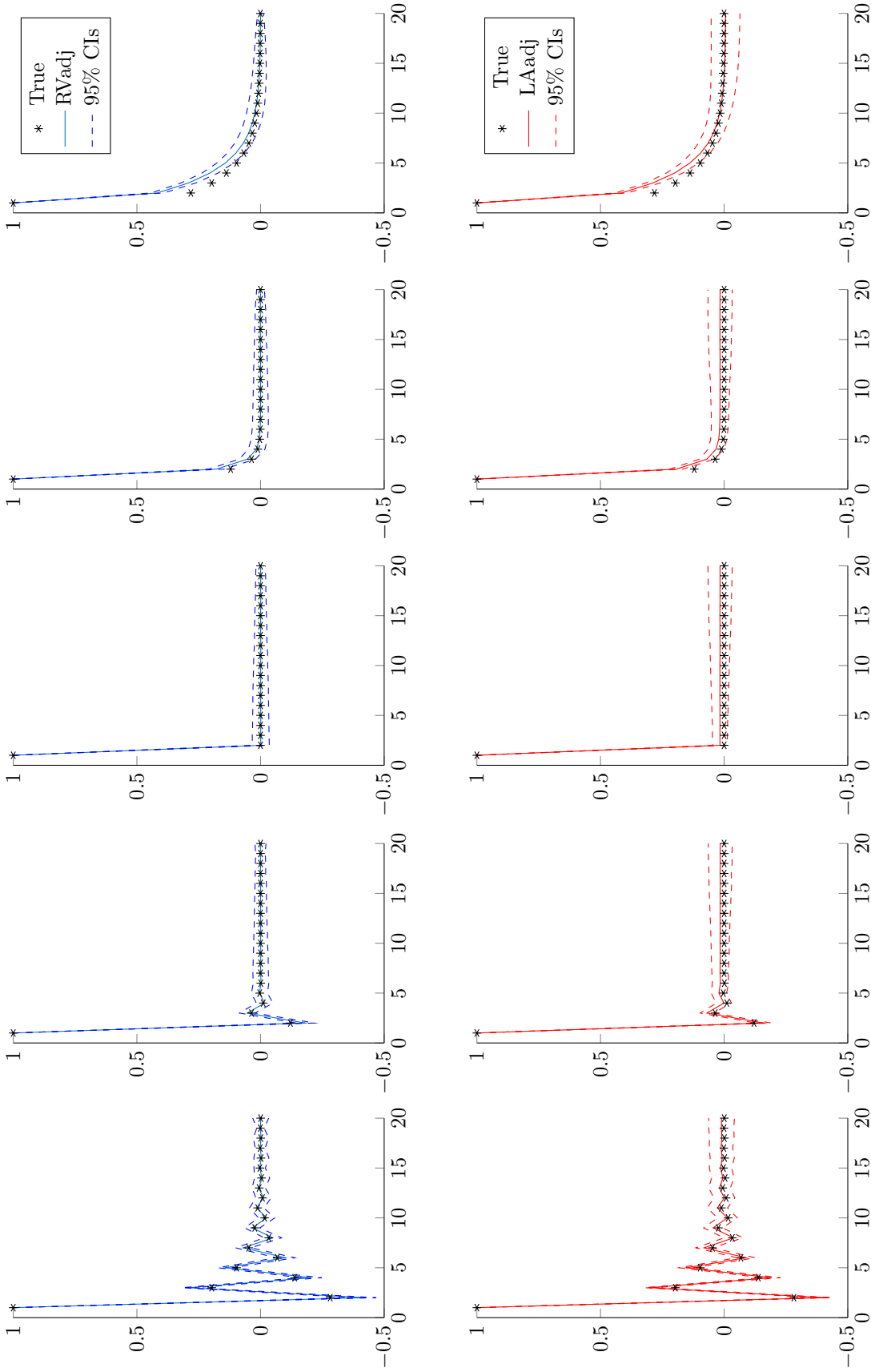}
\caption{Realized volatility (RV) and local averaging (LA) estimators of the autocorrelations of noise against the number of lags $j$,
averaged over $1\mathord{,}000$ simulated samples.
Top panel: RV estimators of the autocorrelations of noise (solid) with 95\% simulated confidence intervals (dashed).
Bottom panel: LA estimators of the autocorrelations of noise (solid) with 95\% simulated confidence intervals (dashed).
The true autocorrelations are displayed in stars.
Both estimators include the finite sample bias correction.
We take $\Delta=1$ sec and the number of observations is $23\mathord{,}400$.
The tuning parameters of the RV and LA estimators are $j_n=20$ and $K_n=6$, respectively.
The AR(1) coefficient $\rho$ of the noise process is fixed in each column.
From left to right, the values of $\rho$ for each column are given by $-0.7, -0.3, 0, 0.3$, and $0.7$.}
\label{fig:RVvsLAWithCIkn6}
\end{sidewaysfigure}

\newpage

\begin{sidewaysfigure}[p]
\centering
\input{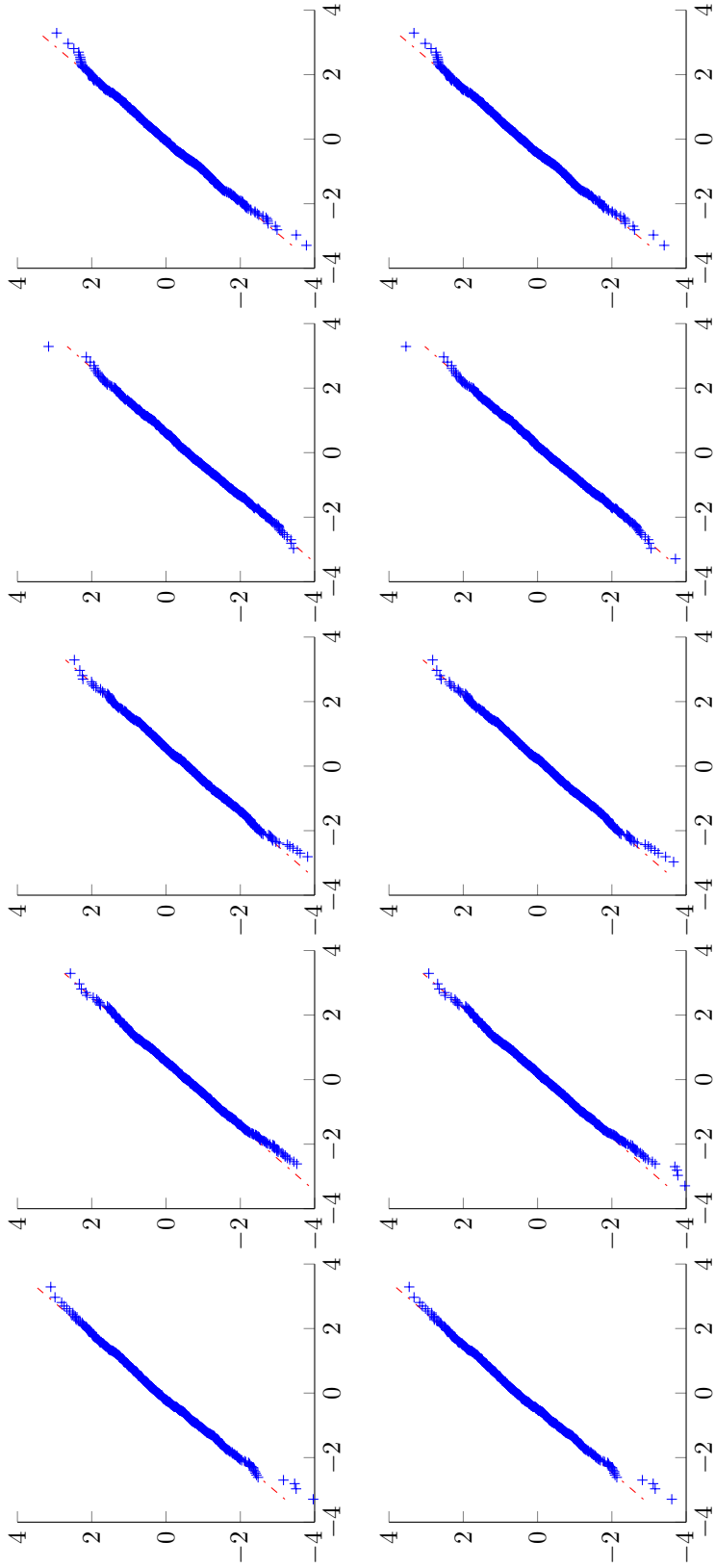}
\caption{Top panel: Standard normal QQ-plots of $n^{1/4}\myp{\widehat{\rm IV}_n-\int_{0}^{1}\sigma^2_s\diff s}/\tau_n$.
Bottom panel: Standard normal QQ-plots of $n^{1/4}\myp{\widehat{\rm IV}_{\rm step2}-\int_{0}^{1}\sigma^2_s\diff s}/\tau_n$.
The AR(1) coefficient $\rho$ of the noise process is fixed in each column.
From left to right, the values of $\rho$ for each column are given by $-0.7, -0.3, 0, 0.3$, and $0.7$.
The number of simulations is $1\mathord{,}000$, the data frequency is $\Delta = 0.05$ sec,
and the number of observations is $468\mathord{,}000$.
The tuning parameter of the RV estimator is $j_n=20$ and $i_n=10$.}
\label{fig:CLT_QQplots}
\end{sidewaysfigure}

\newpage

\begin{sidewaysfigure}[p]
\centering
\input{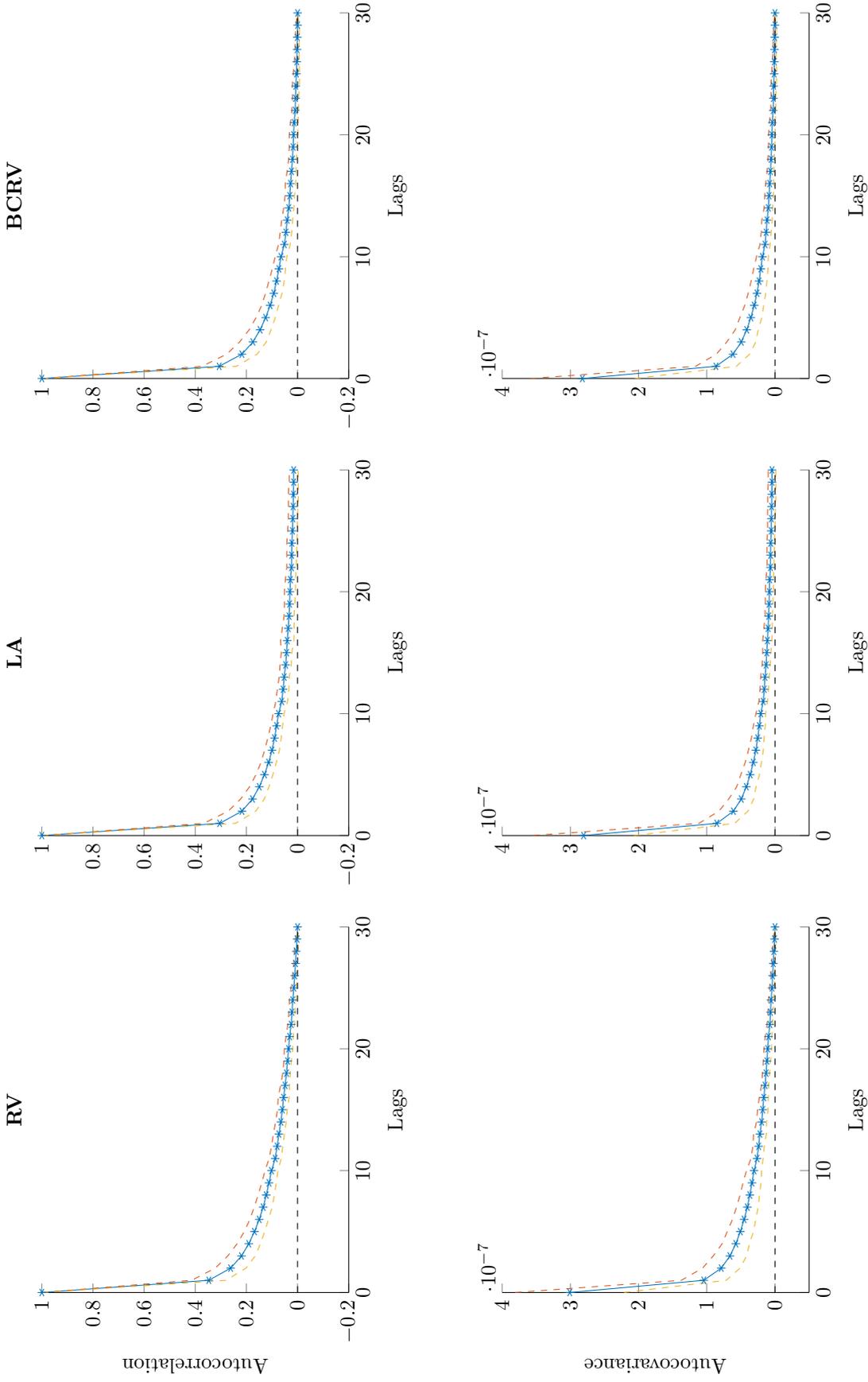}
\caption{
From the left to the right, we display the realized volatility (RV), local averaging (LA),
and the bias corrected realized volatility (BCRV) estimators of the autocorrelations (top panel) and autocovariances (bottom panel) of noise
against the number of lags $j$
based on transaction data for Citigroup.
Sample period: January, 2011.
On average there are 10.5 observations per second in the sample.
The three estimators are applied to and next averaged over each of the 20 trading days.
The stars indicate the means of the 20 estimates.
The dashed lines are 2 standard deviations away from the mean.
The tuning parameter of the RV estimator is $j_n=30$. % and $i_n=15$.
}
\label{fig:RV_LA_2step_fulldata}
\end{sidewaysfigure}

\newpage

\begin{figure}[h]
\centering
% This file was created by matlab2tikz.
% Minimal pgfplots version: 1.3
%
%The latest updates can be retrieved from
%  http://www.mathworks.com/matlabcentral/fileexchange/22022-matlab2tikz
%where you can also make suggestions and rate matlab2tikz.
%
\definecolor{mycolor1}{rgb}{0.00000,0.44700,0.74100}%
\definecolor{mycolor2}{rgb}{0.85000,0.32500,0.09800}%
\definecolor{mycolor3}{rgb}{0.92900,0.69400,0.12500}%
\begin{tikzpicture}

\begin{axis}[%
width=5in,
height=2in,
xlabel = Trading days,
ylabel = Integrated volatility,
at={(0.758333in,3.5in)},
scale only axis,
xmin=1,
xmax=20,
ymin=0,
ymax=0.0006,
axis x line*=bottom,
axis y line*=left,
legend style={at={(0.0,1.0)},legend cell align=left,align=left,anchor=north west,draw=white!15!black}
]
\addplot [color=mycolor1,dashed,mark=asterisk,mark options={solid}]
  table[row sep=crcr]{%
1	0.000342767981136732\\
2	0.000309675679890533\\
3	0.000299212452920407\\
4	0.000344871940393436\\
5	0.000338792285141596\\
6	0.000237835791172763\\
7	0.000152344087130008\\
8	0.000226413186665286\\
9	0.000270142242472326\\
10	0.000345705476926557\\
11	0.000538226113225456\\
12	0.000325325376132046\\
13	0.000481535797607275\\
14	0.000260852267434378\\
15	0.000184487508247319\\
16	0.000239417822974756\\
17	0.000270647034327211\\
18	0.000205342327605504\\
19	0.000346136133818423\\
20	0.000217904032329449\\
};
\addlegendentry{$\widehat{{\rm IV}}_{\rm step1}$};

\addplot [color=green,dashed,mark=o,mark options={solid}]
  table[row sep=crcr]{%
1	0.000272949614582903\\
2	0.000230320804729469\\
3	0.000204901174361047\\
4	0.000258177723748505\\
5	0.000268301331027874\\
6	0.000174573819309173\\
7	0.000106520956869723\\
8	0.000162092054262352\\
9	0.000199463086212309\\
10	0.000283237810869355\\
11	0.000445853822902882\\
12	0.00024090452797241\\
13	0.000370020218163353\\
14	0.000195756872467129\\
15	0.000121813964805414\\
16	0.000183616867148079\\
17	0.000204696580135662\\
18	0.000131471243268043\\
19	0.000258132418652664\\
20	0.000143186259430568\\
};
\addlegendentry{$\widehat{{\rm IV}}_{\rm step2}$};

\addplot [color=red,solid,mark=asterisk,mark options={solid}]
  table[row sep=crcr]{%
1	0.000240523891071523\\
2	0.000197636360486083\\
3	0.000173276170669884\\
4	0.000224686046059788\\
5	0.000236909338330909\\
6	0.000139737187571973\\
7	7.8153197820909e-05\\
8	0.000139358056457569\\
9	0.000168918185474758\\
10	0.000254225906025469\\
11	0.000419765455741406\\
12	0.000216334857656741\\
13	0.000324637180357734\\
14	0.000168809233821773\\
15	9.88021803705711e-05\\
16	0.000149247747801338\\
17	0.000167122881317485\\
18	9.60130896422436e-05\\
19	0.000221530064665038\\
20	0.0001036564137206\\
};
\addlegendentry{$\widehat{{\rm IV}}_n$};

\end{axis}

\begin{axis}[%
width=5in,
height=2in,
xlabel = Trading days,
ylabel = Integrated volatility,
at={(0.758333in,0.48125in)},
scale only axis,
xmin=1,
xmax=20,
ymin=0,
ymax=0.0006,
axis x line*=bottom,
axis y line*=left,
legend style={at={(0.0,1.0)},legend cell align=left,align=left,anchor=north west,draw=white!15!black}
]
\addplot [color=mycolor1,dashed,mark=asterisk,mark options={solid}]
  table[row sep=crcr]{%
1	0.000342767981136732\\
2	0.000309675679890533\\
3	0.000299212452920407\\
4	0.000344871940393436\\
5	0.000338792285141596\\
6	0.000237835791172763\\
7	0.000152344087130008\\
8	0.000226413186665286\\
9	0.000270142242472326\\
10	0.000345705476926557\\
11	0.000538226113225456\\
12	0.000325325376132046\\
13	0.000481535797607275\\
14	0.000260852267434378\\
15	0.000184487508247319\\
16	0.000239417822974756\\
17	0.000270647034327211\\
18	0.000205342327605504\\
19	0.000346136133818423\\
20	0.000217904032329449\\
};
\addlegendentry{$\widehat{{\rm IV}}_{\rm step1}$};

\addplot [color=green,dashed,mark=o,mark options={solid}]
  table[row sep=crcr]{%
1	0.000272949614582903\\
2	0.000230320804729469\\
3	0.000204901174361047\\
4	0.000258177723748505\\
5	0.000268301331027874\\
6	0.000174573819309173\\
7	0.000106520956869723\\
8	0.000162092054262352\\
9	0.000199463086212309\\
10	0.000283237810869355\\
11	0.000445853822902882\\
12	0.00024090452797241\\
13	0.000370020218163353\\
14	0.000195756872467129\\
15	0.000121813964805414\\
16	0.000183616867148079\\
17	0.000204696580135662\\
18	0.000131471243268043\\
19	0.000258132418652664\\
20	0.000143186259430568\\
};
\addlegendentry{$\widehat{{\rm IV}}_{\rm step2}$};

\addplot [color=black,dotted]
  table[row sep=crcr]{%
1	0.000307856508318686\\
2	0.000266556966325669\\
3	0.000239028728983702\\
4	0.00029654008140327\\
5	0.00030648667638126\\
6	0.000205360794189778\\
7	0.00012865479993235\\
8	0.000188340323699273\\
9	0.00022970083996152\\
10	0.000319356680117591\\
11	0.000496248629172458\\
12	0.000276886338536168\\
13	0.000509112434830414\\
14	0.000227586685774148\\
15	0.000148535260737221\\
16	0.000216078513009179\\
17	0.000238131946311743\\
18	0.000161193533601001\\
19	0.000295839064978034\\
20	0.000172783551928873\\
};
\addlegendentry{95\% CIs};

\addplot [color=black,dotted,forget plot]
  table[row sep=crcr]{%
1	0.00023804272084712\\
2	0.000194084643133269\\
3	0.000170773619738392\\
4	0.00021981536609374\\
5	0.000230115985674488\\
6	0.000143786844428567\\
7	8.43871138070971e-05\\
8	0.000135843784825432\\
9	0.000169225332463098\\
10	0.000247118941621119\\
11	0.000395459016633306\\
12	0.000204922717408651\\
13	0.000230928001496292\\
14	0.00016392705916011\\
15	9.50926688736078e-05\\
16	0.000151155221286979\\
17	0.000171261213959582\\
18	0.000101748952935084\\
19	0.000220425772327293\\
20	0.000113588966932263\\
};
\end{axis}
\end{tikzpicture}%
\caption{Estimation of the integrated volatility based on transaction data for Citigroup.
Sample period: January, 2011, consisting of 20 trading days.
On average there are 10.5 observations per second in the sample.
The estimators $\widehat{\rm IV}_{\rm step1}$, $\widehat{\rm IV}_{\rm step2}$, and $\widehat{\rm IV}_{n}$
are given by~\eqref{eq:1stStepIV},~\eqref{eq:2ndStepIV}, and~\eqref{eq:consistency_SV_nonpar}.
%The three estimators are applied to each of the 20 trading days.
In the bottom panel, the asymptotic confidence intervals (CIs) are based on the limit distribution in Theorem~\ref{thm:CLT}.
%We set $j_n=30, j_n^* = 15, c=0.2$.%The tuning parameter of the RV estimator is $j_n=30$ and $i_n=15$.
The tuning parameter of the RV estimator is $j_n=30$ and $i_n=15$.
}
\label{fig:CitiIVsFullData}
\end{figure}
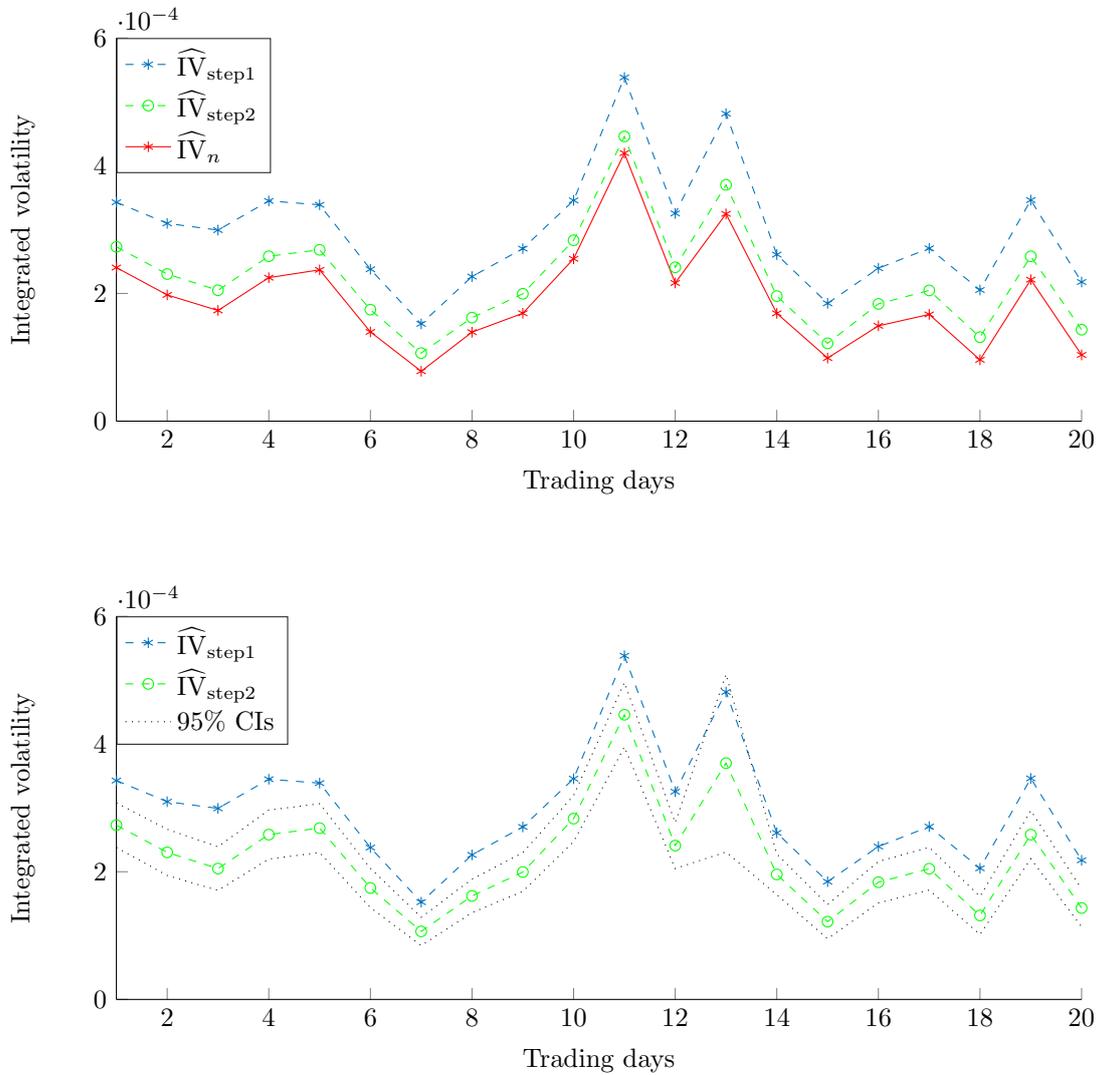

\newpage

\begin{figure}[h]
\centering
\input{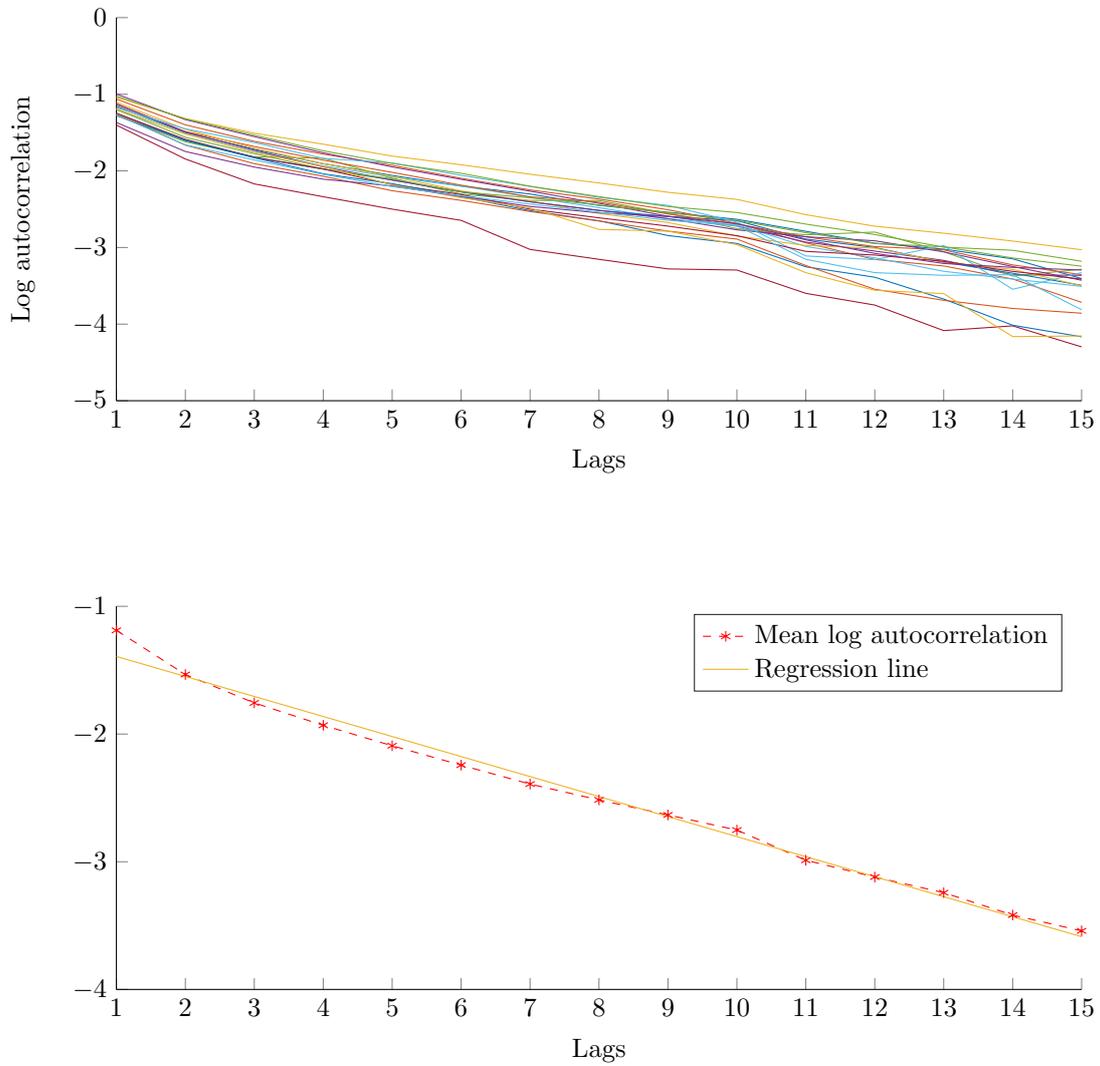}
\caption{Top panel: Logarithmic autocorrelations of noise against the number of lags $j$
estimated by BCRV for each trading day based on transaction data for Citigroup.
Bottom panel: Means of the logarithmic autocorrelations of noise and a linear regression line.
Sample period: January, 2011, consisting of 20 trading days.
On average there are 10.5 observations per second in the sample.
%We set $j_n=30, j_n^* = 15, c=0.2$.%The tuning parameter of the RV estimator is $j_n=30$ and $j_n^* = 15$.
The tuning parameter of the RV estimator is $j_n=30$. % and $i_n=15$.
}
\label{fig:CitiLogCors}
\end{figure}

\newpage

\begin{sidewaysfigure}[h]
\centering
\input{RV_LA_2step_seconddata.tex}
\caption{
From the left to the right, we display the realized volatility (RV), local averaging (LA),
and the bias corrected realized volatility (BCRV) estimators of the autocorrelations (top panel) and autocovariances (bottom panel) of noise
against the number of lags $j$
based on a subsample of the transaction data for Citigroup.
Sample period: January, 2011.
The subsample is recorded on a 1-sec time scale.
The three estimators are applied to and next averaged over each of the 20 trading days.
The stars indicate the means of the 20 estimates.
The dashed lines are 2 standard deviations away from the mean.
%We set $j_n=30, j_n^* = 5, c=0.2$.%The tuning parameter of the RV estimator is $j_n=30$.
The tuning parameter of the RV estimator is $j_n=30$. % and $i_n=5$.
}
\label{fig:RV_LA_2step_seconddata}
\end{sidewaysfigure}

\newpage

\begin{figure}[h]
\centering
% This file was created by matlab2tikz.
% Minimal pgfplots version: 1.3
%
%The latest updates can be retrieved from
%  http://www.mathworks.com/matlabcentral/fileexchange/22022-matlab2tikz
%where you can also make suggestions and rate matlab2tikz.
%
\definecolor{mycolor1}{rgb}{0.00000,0.44700,0.74100}%
\definecolor{mycolor2}{rgb}{0.85000,0.32500,0.09800}%
\definecolor{mycolor3}{rgb}{0.92900,0.69400,0.12500}%
\begin{tikzpicture}

\begin{axis}[%
width=5in,
height=2.1in,
xlabel = Trading days,
ylabel = Integrated volatility,
at={(0.758333in,3.554288in)},
scale only axis,
xmin=1,
xmax=20,
ymin=0,
ymax=0.0006,
axis x line*=bottom,
axis y line*=left,
legend style={at={(0.0,1.0)},legend cell align=left,align=left,anchor=north west,draw=white!15!black}
]
\addplot [color=mycolor1,dashed,mark=asterisk,mark options={solid}]
  table[row sep=crcr]{%
1	0.000160120165164057\\
2	0.000205373194846904\\
3	0.000175072389027763\\
4	0.000222301085472496\\
5	0.000301984694486438\\
6	0.000185994454191223\\
7	0.000104691265705434\\
8	9.77547647194857e-05\\
9	0.000143065602508955\\
10	0.000282717029152941\\
11	0.000441851062923783\\
12	0.000140104058892604\\
13	0.000257314798244661\\
14	0.000198668174497754\\
15	0.000114907797822648\\
16	0.000219555104341305\\
17	0.000147693898899805\\
18	3.33944290737129e-05\\
19	0.000351012216825006\\
20	0.000144734139445378\\
};
\addlegendentry{$\widehat{\rm IV}_{\rm step1}$};

\addplot [color=green,dashed,mark=o,mark options={solid}]
  table[row sep=crcr]{%
1	0.000133173292108777\\
2	0.000200177522224605\\
3	0.000174065515000371\\
4	0.000223467253813652\\
5	0.000308888956314976\\
6	0.000184100640717711\\
7	0.000105639868050627\\
8	9.34671822336794e-05\\
9	0.00012302889790761\\
10	0.000268792738451024\\
11	0.00043601365768514\\
12	0.000141469449278672\\
13	0.000240363451263943\\
14	0.000203764305264761\\
15	0.000118762253101443\\
16	0.000243866081462708\\
17	0.000146869409637058\\
18	2.71063843871769e-05\\
19	0.000382926132593878\\
20	0.000138117189103881\\
};
\addlegendentry{$\widehat{\rm IV}_{\rm step2}$};

\addplot [color=red,solid,mark=asterisk,mark options={solid}]
  table[row sep=crcr]{%
1	5.04110030343137e-05\\
2	9.56482839451254e-05\\
3	8.35581452521043e-05\\
4	0.000109683865312266\\
5	0.000156633703088783\\
6	8.13355729923034e-05\\
7	4.83030537443079e-05\\
8	4.47076247458429e-05\\
9	4.90170966197249e-05\\
10	0.000127159581859553\\
11	0.000222162971756412\\
12	7.26241103372983e-05\\
13	0.000110204129839455\\
14	9.91696666391215e-05\\
15	5.73518024747446e-05\\
16	0.000122109706534132\\
17	6.72562385781986e-05\\
18	1.01486769378988e-05\\
19	0.000197518169781938\\
20	5.74958183892419e-05\\
};
\addlegendentry{$\widehat{{\rm IV}}_n$};

\end{axis}

\begin{axis}[%
width=5in,
height=2.2in,
xlabel = Trading days,
ylabel = Integrated volatility,
at={(0.758333in,0.48125in)},
scale only axis,
xmin=1,
xmax=20,
ymin=-0.0001,
ymax=0.0006,
axis x line*=bottom,
axis y line*=left,
legend style={at={(0.0,1.0)},legend cell align=left,align=left,anchor=north west,draw=white!15!black}
]
\addplot [color=mycolor1,dashed,mark=asterisk,mark options={solid}]
  table[row sep=crcr]{%
1	0.000160120165164057\\
2	0.000205373194846904\\
3	0.000175072389027763\\
4	0.000222301085472496\\
5	0.000301984694486438\\
6	0.000185994454191223\\
7	0.000104691265705434\\
8	9.77547647194857e-05\\
9	0.000143065602508955\\
10	0.000282717029152941\\
11	0.000441851062923783\\
12	0.000140104058892604\\
13	0.000257314798244661\\
14	0.000198668174497754\\
15	0.000114907797822648\\
16	0.000219555104341305\\
17	0.000147693898899805\\
18	3.33944290737129e-05\\
19	0.000351012216825006\\
20	0.000144734139445378\\
};
\addlegendentry{$\widehat{\rm IV}_{\rm step1}$};

\addplot [color=red,solid,mark=asterisk,mark options={solid}]
  table[row sep=crcr]{%
1	5.04110030343137e-05\\
2	9.56482839451254e-05\\
3	8.35581452521043e-05\\
4	0.000109683865312266\\
5	0.000156633703088783\\
6	8.13355729923034e-05\\
7	4.83030537443079e-05\\
8	4.47076247458429e-05\\
9	4.90170966197249e-05\\
10	0.000127159581859553\\
11	0.000222162971756412\\
12	7.26241103372983e-05\\
13	0.000110204129839455\\
14	9.91696666391215e-05\\
15	5.73518024747446e-05\\
16	0.000122109706534132\\
17	6.72562385781986e-05\\
18	1.01486769378988e-05\\
19	0.000197518169781938\\
20	5.74958183892419e-05\\
};
\addlegendentry{$\widehat{\rm IV}_{n}$};

\addplot [color=black,dotted]
  table[row sep=crcr]{%
1	0.000112106298011992\\
2	0.000152995154132794\\
3	0.000162590032908091\\
4	0.000192527380446363\\
5	0.000241486853869392\\
6	0.000137913215551924\\
7	8.56562816214041e-05\\
8	7.46561000534136e-05\\
9	9.19826888270062e-05\\
10	0.000193478393253096\\
11	0.000346535865982098\\
12	0.000113831012920566\\
13	0.00019733099853325\\
14	0.000162734337394795\\
15	0.000106579221390722\\
16	0.000212589430217391\\
17	0.000109082942782461\\
18	2.32679307722068e-05\\
19	0.000302266971003457\\
20	0.000107308842038\\
};
\addlegendentry{95\% CIs};

\addplot [color=black,dotted,forget plot]
  table[row sep=crcr]{%
1	-1.12842919433641e-05\\
2	3.83014137574572e-05\\
3	4.52625759611725e-06\\
4	2.68403501781684e-05\\
5	7.17805523081742e-05\\
6	2.47579304326827e-05\\
7	1.09498258672116e-05\\
8	1.47591494382722e-05\\
9	6.05150441244355e-06\\
10	6.0840770466009e-05\\
11	9.77900775307258e-05\\
12	3.14172077540309e-05\\
13	2.30772611456602e-05\\
14	3.56049958834479e-05\\
15	8.12438355876763e-06\\
16	3.16299828508734e-05\\
17	2.54295343739362e-05\\
18	-2.97057689640912e-06\\
19	9.27693685604179e-05\\
20	7.68279474048343e-06\\
};
\end{axis}
\end{tikzpicture}%
\caption{
Estimation of the integrated volatility based on a subsample of the transaction data for Citigroup.
Sample period: January, 2011, consisting of 20 trading days.
The subsample is recorded on a 1-sec time scale.
The estimators $\widehat{\rm IV}_{\rm step1}$, $\widehat{\rm IV}_{\rm step2}$, and $\widehat{\rm IV}_{n}$
are given by~\eqref{eq:1stStepIV},~\eqref{eq:2ndStepIV}, and~\eqref{eq:consistency_SV_nonpar}.
%The three estimators are applied to each of the 20 trading days.
In the bottom panel, the asymptotic confidence intervals (CIs) are based on the limit distribution in Theorem~\ref{thm:CLT}.
%We set $j_n=30, j_n^* = 5, c=0.2$.%The tuning parameter of the RV estimator is $j_n=30$ and $i_n=5$.
The tuning parameter of the RV estimator is $j_n=30$ and $i_n=5$.
}
\label{fig:CitiIVsSecondData}
\end{figure}
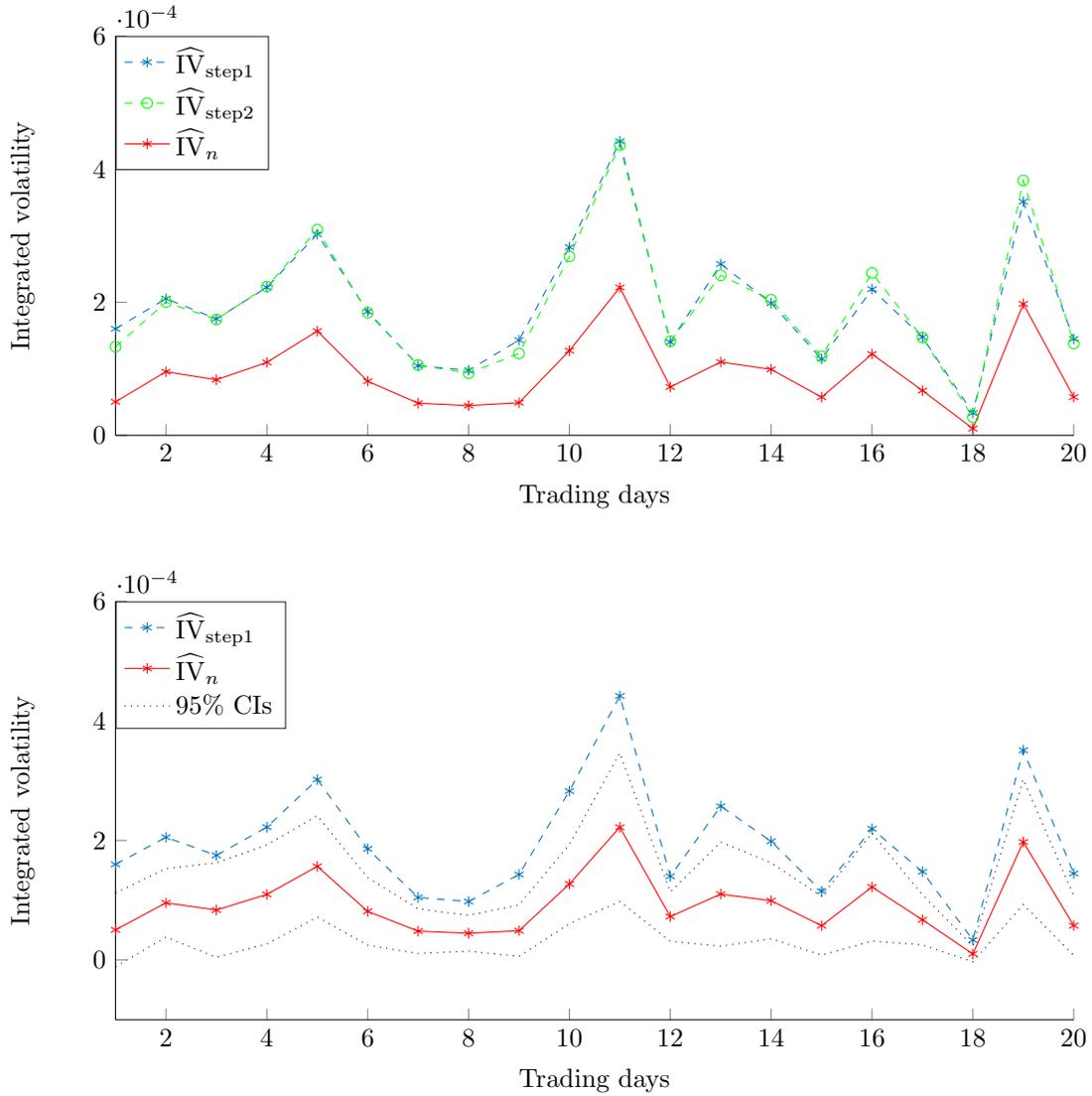

\newpage

\begin{sidewaysfigure}[h]
\centering
\input{RV_LA_2step_tickdata.tex}
\caption{
From the left to the right, we display the realized volatility (RV), local averaging (LA),
and the bias corrected realized volatility (BCRV) estimators of the autocorrelations (top panel) and autocovariances (bottom panel) of noise
against the number of lags $j$
based on a subsample of the transaction data for Citigroup.
Sample period: January, 2011.
The subsample is recorded at tick time.
On average there are 3.2 observations per second in the sample.
The three estimators are applied to and next averaged over each of the 20 trading days.
The stars indicate the means of the 20 estimates.
The dashed lines are 2 standard deviations away from the mean.
%We set $j_n=30, j_n^* = 10, c=0.2$.
The tuning parameter of the RV estimator is $j_n=30$. % and $i_n=10$.
}
\label{fig:RV_LA_2step_tickdata}
\end{sidewaysfigure}

\newpage

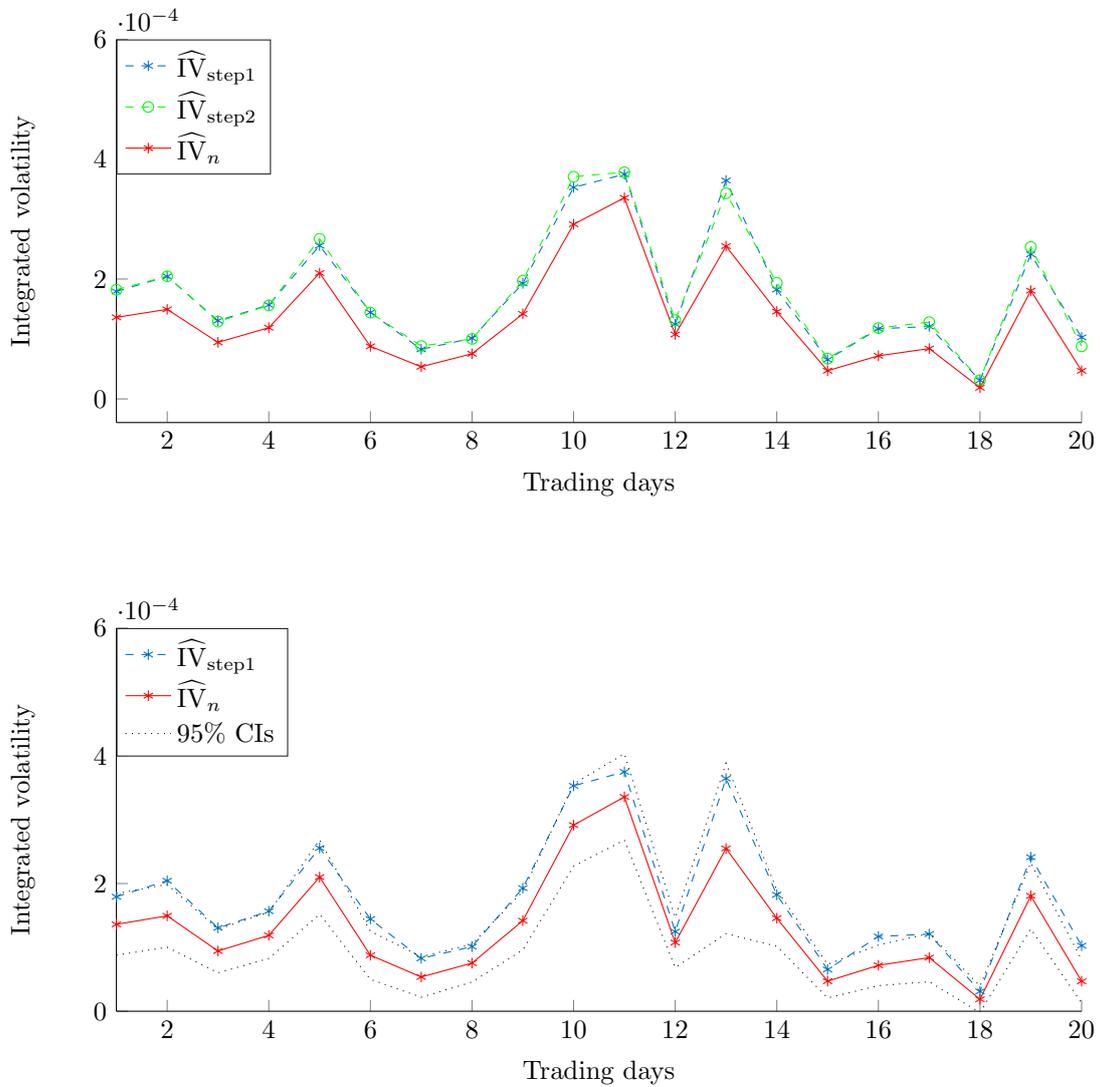
\begin{figure}[h]
\centering
% This file was created by matlab2tikz.
% Minimal pgfplots version: 1.3
%
%The latest updates can be retrieved from
%  http://www.mathworks.com/matlabcentral/fileexchange/22022-matlab2tikz
%where you can also make suggestions and rate matlab2tikz.
%
\definecolor{mycolor1}{rgb}{0.00000,0.44700,0.74100}%
\definecolor{mycolor2}{rgb}{0.85000,0.32500,0.09800}%
\definecolor{mycolor3}{rgb}{0.92900,0.69400,0.12500}%
\begin{tikzpicture}

\begin{axis}[%
width=5in,
height=2in,
xlabel = Trading days,
ylabel = Integrated volatility,
at={(0.758333in,3.554288in)},
scale only axis,
xmin=1,
xmax=20,
ymax=0.0006,
axis x line*=bottom,
axis y line*=left,
legend style={at={(0.0,1.0)},legend cell align=left,align=left,anchor=north west,draw=white!15!black}
]
\addplot [color=mycolor1,dashed,mark=asterisk,mark options={solid}]
  table[row sep=crcr]{%
1	0.000179305118053875\\
2	0.000204381293891562\\
3	0.00013053835534511\\
4	0.000156768730509865\\
5	0.000255698919789147\\
6	0.000144341143105837\\
7	8.27167429620677e-05\\
8	0.000101176733991231\\
9	0.000192809423877079\\
10	0.000353012707681033\\
11	0.000374909052321031\\
12	0.000124694714529091\\
13	0.000364501061655364\\
14	0.000182404030452263\\
15	6.5387418475669e-05\\
16	0.000117032439053008\\
17	0.000120853323959626\\
18	3.09996737087025e-05\\
19	0.000241258245412906\\
20	0.000102701226353366\\
};
\addlegendentry{$\widehat{{\rm IV}}_{\rm step1}$};

\addplot [color=green,dashed,mark=o,mark options={solid}]
  table[row sep=crcr]{%
1	0.00018223313211677\\
2	0.000204613016446565\\
3	0.000128929278009242\\
4	0.000156023545299798\\
5	0.000267278689664566\\
6	0.000143838801392631\\
7	8.83201662558958e-05\\
8	0.000100103382780624\\
9	0.000197286813290372\\
10	0.000370808592033311\\
11	0.000378302217800778\\
12	0.000131342304694089\\
13	0.000343188187529943\\
14	0.000193792918183755\\
15	6.72660911169053e-05\\
16	0.000118262498919433\\
17	0.000128158981978209\\
18	3.12484759050565e-05\\
19	0.000253636472400188\\
20	8.77199927649441e-05\\
};
\addlegendentry{$\widehat{{\rm IV}}_{\rm step2}$};

\addplot [color=red,solid,mark=asterisk,mark options={solid}]
  table[row sep=crcr]{%
1	0.000136108951524483\\
2	0.000149408599252215\\
3	9.43714749838871e-05\\
4	0.000118830324837183\\
5	0.000210053119630892\\
6	8.76885315044032e-05\\
7	5.36482606977297e-05\\
8	7.5370178892563e-05\\
9	0.00014185005784352\\
10	0.000291489158984523\\
11	0.000335666401724383\\
12	0.000107532586153212\\
13	0.000254970855275166\\
14	0.000145729677886864\\
15	4.71195900707798e-05\\
16	7.19427806864547e-05\\
17	8.39966642842e-05\\
18	1.86839499083425e-05\\
19	0.000180658487342312\\
20	4.6927466000763e-05\\
};
\addlegendentry{$\widehat{\rm IV}_{n}$};

\end{axis}

\begin{axis}[%
width=5in,
height=2in,
at={(0.758333in,0.48125in)},
scale only axis,
xlabel = Trading days,
ylabel = Integrated volatility,
xmin=1,
xmax=20,
ymin=0,
ymax=0.0006,
axis x line*=bottom,
axis y line*=left,
legend style={at={(0.0,1.0)},legend cell align=left,align=left,anchor=north west,draw=white!15!black}
]
\addplot [color=mycolor1,dashed,mark=asterisk,mark options={solid}]
  table[row sep=crcr]{%
1	0.000179305118053875\\
2	0.000204381293891562\\
3	0.00013053835534511\\
4	0.000156768730509865\\
5	0.000255698919789147\\
6	0.000144341143105837\\
7	8.27167429620677e-05\\
8	0.000101176733991231\\
9	0.000192809423877079\\
10	0.000353012707681033\\
11	0.000374909052321031\\
12	0.000124694714529091\\
13	0.000364501061655364\\
14	0.000182404030452263\\
15	6.5387418475669e-05\\
16	0.000117032439053008\\
17	0.000120853323959626\\
18	3.09996737087025e-05\\
19	0.000241258245412906\\
20	0.000102701226353366\\
};
\addlegendentry{$\widehat{{\rm IV}}_{\rm step1}$};

\addplot [color=red,solid,mark=asterisk,mark options={solid}]
  table[row sep=crcr]{%
1	0.000136108951524483\\
2	0.000149408599252215\\
3	9.43714749838871e-05\\
4	0.000118830324837183\\
5	0.000210053119630892\\
6	8.76885315044032e-05\\
7	5.36482606977297e-05\\
8	7.5370178892563e-05\\
9	0.00014185005784352\\
10	0.000291489158984523\\
11	0.000335666401724383\\
12	0.000107532586153212\\
13	0.000254970855275166\\
14	0.000145729677886864\\
15	4.71195900707798e-05\\
16	7.19427806864547e-05\\
17	8.39966642842e-05\\
18	1.86839499083425e-05\\
19	0.000180658487342312\\
20	4.6927466000763e-05\\
};
\addlegendentry{$\widehat{\rm IV}_{n}$};

\addplot [color=black,dotted]
  table[row sep=crcr]{%
1	0.000184283989285277\\
2	0.000198642820023909\\
3	0.000128509273474775\\
4	0.000155001443417079\\
5	0.000268157019871519\\
6	0.000125405827772324\\
7	8.53416259257125e-05\\
8	0.000105003007073494\\
9	0.000187636210576267\\
10	0.000355968196561921\\
11	0.000403220243158482\\
12	0.000146690209007868\\
13	0.000388374268301363\\
14	0.000189756177857323\\
15	7.33763752287332e-05\\
16	0.000103787256837474\\
17	0.000121748231690376\\
18	3.9481784882493e-05\\
19	0.000232253872440713\\
20	8.08211614853254e-05\\
};
\addlegendentry{95\% CIs};

\addplot [color=black,dotted,forget plot]
  table[row sep=crcr]{%
1	8.79339137636883e-05\\
2	0.000100174378480521\\
3	6.02336764929991e-05\\
4	8.2659206257288e-05\\
5	0.000151949219390264\\
6	4.99712352364822e-05\\
7	2.19548954697468e-05\\
8	4.5737350711632e-05\\
9	9.60639051107733e-05\\
10	0.000227010121407126\\
11	0.000268112560290284\\
12	6.83749632985558e-05\\
13	0.000121567442248969\\
14	0.000101703177916405\\
15	2.08628049128264e-05\\
16	4.00983045354356e-05\\
17	4.62450968780244e-05\\
18	-2.11388506580795e-06\\
19	0.000129063102243911\\
20	1.30337705162006e-05\\
};
\end{axis}
\end{tikzpicture}%
\caption{
Estimation of the integrated volatility based on a subsample of the transaction data for Citigroup.
Sample period: January, 2011, consisting of 20 trading days.
The subsample is recorded at tick time.
On average there are 3.2 observations per second in the sample.
The estimators $\widehat{\rm IV}_{\rm step1}$, $\widehat{\rm IV}_{\rm step2}$, and $\widehat{\rm IV}_{n}$
are given by~\eqref{eq:1stStepIV},~\eqref{eq:2ndStepIV}, and~\eqref{eq:consistency_SV_nonpar}.
%The three estimators are applied to each of the 20 trading days.
In the bottom panel, the asymptotic confidence intervals (CIs) are based on the limit distribution in Theorem~\ref{thm:CLT}.
%We set $j_n=30$ and $i_n=10$.
The tuning parameter of the RV estimator is $j_n=30$ and $i_n=10$.
}
\label{fig:CitiIVsTickData}
\end{figure}

\newpage

\clearpage

\appendix

\renewcommand\theequation{\thesection.\arabic{equation}}
\renewcommand\thefigure{\thesection.\arabic{figure}}
\renewcommand\thetable{\thesection.\arabic{table}}

\vskip 2cm
\begin{center}
{\LARGE Supplementary Material to}\\[5mm]
{\LARGE ``Dependent Microstructure Noise and Integrated Volatility Estimation from High-Frequency Data''}
\end{center}

\newpage

\section*{Appendix}\label{sec:Appendix}

Sections \ref{appendix:prop_RV_Estimate_var+cov(1)}--\ref{appendix:thm_CLT} in this appendix
contain detailed technical proofs of our results.
In Sections%~\ref{sec:TickTimeSimu},
~\ref{sec:SVsimu} and~\ref{sec:GE_Empirical},
we provide additional Monte Carlo simulation and empirical results.
In the proofs that follow the constants $C$ and $\delta\in(0,1)$ may vary from line to line.
We add a subscript $q$ if they depend on some parameter $q$.

\begin{appendices}

\setcounter{equation}{0}
	
\section{Proof of Proposition~\ref{prop:RV_Estimate_var+cov(1)}}
\label{appendix:prop_RV_Estimate_var+cov(1)}

\begin{proof}
Adopting the standard localization procedure
(see e.g.,~\cite{jacod2011discretization} for further details),
we may assume that the processes $a$ and $\sigma$ are bounded by constants $C_{a},C_\sigma>0$.
This yields for any such continuous It\^o semimartingale $X$ and stopping times $S\leq T$ that
\begin{align}\label{eq:Classic_Est_X_p}
\conexp{\abs{X_T-X_S}^p}{\mathcal{F}_S}\leq C_p\conexp{T-S}{\mathcal{F}_S},\quad \forall p\geq 2.
\end{align}
%Then the established result will remain valid under our actual assumptions of local boundedness.

Let $\Delta_n = 1/n$.
For any process $V$, we write $\Delta^n_{i,j}V := V^n_{i+j}-V^n_{i}$, $j=1,2,\ldots,n-i$.
Then, for the log-price process $Y$,
\begin{equation}
			[Y,Y]^{j}_n := \sum_{i=0}^{n-j}(\Delta^n_{i,j} Y)^2 = \sum_{i=0}^{n-j}(\Delta^n_{i,j} X)^2 + 2\sum_{i=0}^{n-j}\Delta^n_{i,j} X\ \Delta^n_{i,j} U + \sum_{i=0}^{n-j}(\Delta^n_{i,j} U)^2.
\label{eq:decomp_[Y,Y]^j_n}
\end{equation}
We now analyze the asymptotic properties of the three components on the right-hand side of \eqref{eq:decomp_[Y,Y]^j_n}:
\begin{enumerate}
[(i)]\item First note that $\sum_{i=0}^{n-j}(\Delta^n_{i,j} X)^2/j\Pconverge  [X,X]$, where $[X,X]$ is the quadratic variation of $X$.
%\item We apply Lemma 1 from~\cite{Ait-Sahalia2011DependentNoise} to conclude that $\sum_{i=0}^{n-j}\Delta^n_{i,j} X\ \Delta^n_{i,j} U = O_p(j^{1/2})$.
\item By the independence of $X$ and $U$, we have
\begin{align}\label{eq:dX2_dU2_bound}
\sum_{i=0}^{n-j}\expect{\myp{\Delta^n_{i,j} X\ \Delta^n_{i,j} U }^2}= \sum_{i=0}^{n-j}\expect{\myp{\Delta^n_{i,j} X}^2}\expect{\myp{\Delta^n_{i,j} U }^2}\leq Cj.
\end{align}
The last inequality follows from the fact that $U$ has bounded moments and from an application of~\eqref{eq:Classic_Est_X_p}.
Next,
\begin{equation}\label{eq:dX2_dU2_cross_bound}
\begin{split}
&\sum_{i,i': i< i'}\expect{\Delta^n_{i,j} X\ \Delta^n_{i,j} U\ \Delta^n_{i',j} X\ \Delta^n_{i',j} U }\\
=&\sum_{i,i':i< i'}\expect{\Delta^n_{i,j} X\  \Delta^n_{i',j} X}\expect{ \Delta^n_{i,j} U\ \Delta^n_{i',j} U }\\
\leq& Cj\Delta_n\myp{\sum_{i,i': i+j< i'}\expect{ \Delta^n_{i,j} U\ \Delta^n_{i',j} U }+\sum_{i,i': i+j\geq  i'>i }\expect{ \Delta^n_{i,j} U\ \Delta^n_{i',j} U }}\\
\leq&  Cj^2.
\end{split}
\end{equation}
The first inequality follows from the Cauchy-Schwarz inequality and~\eqref{eq:Classic_Est_X_p}.
To see the second inequality, we apply the Cauchy-Schwarz inequality, Lemma VIII 3.102 of~\cite{jacod1987limit}
(hereafter abbreviated as JS-Lemma), and the fact that $v>2$ to obtain
\begin{equation}\label{eq:Apply_JS_Lemma}
\begin{split}
\sum_{i,i':i+j<i'}\expect{ \Delta^n_{i,j} U\ \Delta^n_{i',j} U } & = \sum_{i,i':i+j<i'}\expect{\Delta^n_{i,j} U\ \conexp{  \Delta^n_{i',j} U }{\infor{(i+j)\Delta_n}}}\\
&\leq C\sum_i\sum_{i':i+j<i'}\sqrt{\expect{ \myp{\conexp{  \Delta^n_{i',j} U }{\infor{(i+j)\Delta_n}}}^2} }\\
& \leq C\sum_i\sum_{i':i+j<i'}(i'-(i+j))^{-v/2}\leq C\Delta_n^{-1}.
\end{split}
\end{equation}
Eqns. \eqref{eq:dX2_dU2_bound} and~\eqref{eq:dX2_dU2_cross_bound} imply that $\expect{\myp{\sum_{i=0}^{n-j}\Delta^n_{i,j} X\ \Delta^n_{i,j} U }^2}\leq Cj^2$, thus
\begin{equation}\label{eq:dXdU_order}
\sum_{i=0}^{n-j}\Delta^n_{i,j} X\ \Delta^n_{i,j} U = O_p(j).
\end{equation}

\item Turning to the last sum of \eqref{eq:decomp_[Y,Y]^j_n},
let $\nu_j := \expect{(U^n_{i+j} - U^n_i)^2}= 2(\var{U} - \gamma(j))$.
%Note that the moment restriction $\expect{|U|^{4+\epsilon}}<\infty$ implies $\var{(U^n_{j} - U^n_0)^2}<\infty$.
%Hence,
For $i>j$, we obtain the following in a similar way in which we derived~\eqref{eq:Apply_JS_Lemma}:
\begin{align*}
				\abs{\cov{(U^n_{j} - U^n_0)^2,(U^n_{i+j} - U^n_i)^2}}	\leq C(i-j)^{-v/2},
			%	=& \abs{\expect{[(U^n_{i+j} - U^n_i)^2-\nu_j][(U^n_{j} - U^n_0)^2 - \nu_j]}}\\
			%	= &\abs{\expect{[(U^n_{j} - U^n_0)^2 - \nu_j]\conexp{(U^n_{i+j} - U^n_i)^2-\nu_j}{\mathcal{F}_{\frac{j}{n}} }}}\\
			%	\leq & \sqrt{\var{(U^n_{j} - U^n_0)^2}} \sqrt{\expect{\myp{\conexp{(U^n_{i+j} - U^n_i)^2-\nu_j}{\mathcal{F}_{\frac{j}{n}} }}^2 } } \\
				%\leq & \abs{c_0\tilde{\rho}^{\frac{i-j}{2}} \sqrt{\var{(U^n_{j} - U^n_0)^2}}\expect{((U^n_{j} - U^n_0)^2 - \nu_j)}}\\
			 %\var{(U^n_{j} - U^n_0)^2}.
\end{align*}
				%for some constant $c_0$ and $\tilde{\rho}\in(0,1)$.
				%The last second inequality.
%The last inequality follows from Lemma~\ref{lemma:Jacod_Shiryave_VIII3_102}.
%Therefore, $\expect{\myp{\sum_{i=0}^{n-j}\myp{(\Delta^n_{i,j}U)^2-\nu_j} }^2}$ is uniformly bounded.
				%This proves the series $$\Omega^j_\infty := \var{(U^n_{j} - U^n_0)^2} + 2\sum_{i=1}^{\infty} \cov{(U^n_{j} - U^n_0)^2,(U^n_{i+j} - U^n_i)^2}$$ is absolutely summable, and it validates the following limit distribution\footnote{This is a well-known result, see, among others,~\cite{hamilton1994time} Section 7.2.}:
				%we follow the standard formulas of mixing sums (see also~\cite{Ait-Sahalia2011DependentNoise}, Section 5) to get
			%	\begin{equation}\label{eq:Asy_distri_RV(U;j)}
			%	\sqrt{(n-j+1)}\myp{\frac{\sum_{i=0}^{n-j}(\Delta^n_{i,j}U)^2}{n-j+1} - 2(\var{U} - \gamma(j))}\convergeL % \normdist{0}{\Omega^j_{\infty}}.
			%	\end{equation}
which implies
\begin{equation}
\expect{\myp{\sum_{i=0}^{n-j}\myp{(\Delta^n_{i,j}U)^2-\nu_j}}^2}\leq C\Delta_n^{-1}j.
\label{eq:Asy_Orders_RV(U)_j}
\end{equation}
%whence
%\[
%\frac{\sum_{i=0}^{n-j}(\Delta^n_{i,j}U)^2}{2(n-j+1)}\Pconverge \var{U}-\gamma(j).
%\]
%
%			%Now we see that among the three summands in~\eqref{prop:RV_Estimate_var+cov(1)}, the first two have stochastic order $1/n$. Thus it follows that
%Now it follows immediately from Step (i) and~\eqref{eq:dXdU_order} that
%\begin{equation*}
%\frac{1}{2(n-j+1)}[Y,Y]^{j}_n\Pconverge \var{U} - \gamma(j).
%\end{equation*}
For any fixed $j$, any $j_n$ satisfying $\Delta_nj_n\rightarrow 0, j_n\rightarrow \infty$, we have by~\eqref{eq:dXdU_order},~\eqref{eq:Asy_Orders_RV(U)_j} and~\eqref{eq:rho_strong_mixing} that
\begin{equation}\label{eq:RV{Y,Y}_Op}
\begin{split}
&\widehat{\RV{Y,Y}}_n(j)-\myp{ \var{U}-\gamma(j)} = O_p\myp{\sqrt{\Delta_nj}};\\
&\widehat{\RV{Y,Y}}_n(j_n)-\var{U} = O_p\myp{\max\left\{\sqrt{\Delta_nj_n},j_n^{-v/2}\right\}}.
\end{split}
\end{equation}Now the stated results follow from~\eqref{eq:Asy_condi_RV_consistency}.
\end{enumerate}
\end{proof}

\setcounter{equation}{0}
		
%\section{Proof of Proposition~\ref{prop:consistent_estimate_var_U_dependence}}\label{sec:prop:consistent_estimate_var_U_dependence}
%
%\begin{proof}
%The proof basically follows from the proof of Proposition~\ref{prop:RV_Estimate_var+cov(1)} and the asymptotic conditions imposed on $j_n$ and $n$.
%A decomposition of $[Y,Y]^{j_n}_n$ into three parts as in~\eqref{eq:decomp_[Y,Y]^j_n} is of course still true; $\sum_{i=0}^{n-j_n}(\Delta^n_{i,j_n} X)^2/j_n\Pconverge [X,X]$ holds; $\sum_{i=0}^{n-j_n}\Delta^n_{i,j_n} X\ \Delta^n_{i,j_n} U = O_p(\sqrt{{j_n}})$ holds as well;
%the asymptotic orders as in~\eqref{eq:Asy_Orders_RV(U)_j} are still true.
%Our asymptotic conditions $\rho^{j_n}\sqrt{n} = O(1)$ and $n/j_n\rightarrow \infty$ imply $\gamma(j_{n}) = O_p((n-j_n+1)^{-1/2})$.
%This finishes the proof.
%%s due to the fact $\frac{n}{j_n}\rightarrow \infty$. The proof is complete.
%\end{proof}

\setcounter{equation}{0}
		
\section{Proof of Proposition~\ref{prop:Finite_Sample_Bias_Correction}}
\label{sec:prop:Finite_Sample_Bias_Correction}

\begin{proof}
%Recall
%\begin{align*}
%[Y,Y]^j_n = [X,X]^j_n + [U,U]^j_n
%\end{align*}
Let $k= \lfloor \frac{n}{j}\rfloor$.
We will adopt the square bracket notation in \eqref{eq:decomp_[Y,Y]^j_n} for $X$ and $U$ as well.
By It\^o's isometry, we have
\begin{align*}
\expectsigma{[X,X]^j_{kj-1}} &= \sum_{i=0}^{j-1}\int_{i\Delta_n}^{\myp{(k-1)j+i}\Delta_n}\sigma^2_s\diff s =\sum_{i=0}^{j-1}\myp{\int_{0}^{kj\Delta_n}\sigma^2_s\diff s -\int_{0}^{i\Delta_n} \sigma^2_s\diff s - \int_{\myp{(k-1)j+i}\Delta_n}^{kj\Delta_n} \sigma^2_s\diff s}\\
& = j\int_{0}^{kj\Delta_n}\sigma^2_s\diff s +O_p(j^2\Delta_n).
\end{align*}
Hence, we have
\begin{align*}
\expectsigma{[X,X]^j_n} = j\int_{0}^{1}\sigma^2_s\diff s + O_p(j^2\Delta_n),
\end{align*}
where the stochastic orders follow from the regularity conditions of the volatility path at 0 and 1.
Furthermore, it is immediate that $\expectsigma{[U,U]^j_n} = 2(n-j+1)(\var{U}-\gamma(j)).$
Thus, we have, by the independence of $X$ and $U$,
\begin{align*}
\expectsigma{\widehat{\RV{Y,Y}}_n(j)} = \frac{j\int_{0}^{1}\sigma^2_s\diff s}{2(n-j+1)} + \var{U} - \gamma(j) + O_p(j^2\Delta_n^2).
\end{align*}
\end{proof}

\setcounter{equation}{0}
		
\section{Proof of Proposition~\ref{prop:pre-averaged noise}}\label{appendix:prop:pre-averaged noise}

\begin{proof}[Proof of Proposition~\ref{prop:pre-averaged noise}]
Recall that
\begin{align*}
	\preavg{U} & = \frac{1}{k_n+1}\sum_{i=(2m-2)k_n}^{(2m-1)k_n}\myp{U^n_{i+k_n} - U^n_{i}} \\&= \frac{1}{k_n+1}\myp{\sum_{i=(2m-1)k_n}^{2mk_n}U^n_{i}
		- \sum_{i=(2m-2)k_n}^{(2m-1)k_n}U^n_{i}}.
\end{align*} %Note that by our construction of $k_n$ in~\eqref{eq:k_nM_condition}, $\frac{(m-1)n}{M}+k_n = 2mk_n - k_n$.
Also recall that $U$ is symmetrically distributed around 0,  whence $\preavg{U}$ is equal to the following in distribution:
\begin{align}\label{eq:pre_U_order}
	\preavg{U} \overset{d}{ = }\frac{1}{k_n+1}\myp{\sum_{i=(2m-2)k_n}^{2mk_n}U^n_{i}} + O_p(\sqrt{\Delta_n}).
\end{align}
Since $v>2$, we have $\sigma_U^2<\infty$, and an application of Corollary VIII 3.106 of~\cite{jacod1987limit} yields
%The mixing coefficients of the noise process are decaying exponentially,
%hence we can apply Lemma~\ref{thm:clt-alpha-mixing} to $\sum_{i=(2m-2)k_n}^{2mk_n}U^n_{i}$:
\[
	\frac{1}{\sqrt{2k_n + 1}}\sum_{i=(2m-2)k_n}^{2mk_n}U^n_{i}\convergeL \normdist{0}{\sigma^2_U},
\]
whence
\[
	n^{1/4}\preavg{U}\convergeL \normdist{0}{2\sigma^2_U/c}.
\]
\end{proof}

%		\section{Proof of Proposition~\ref{prop:consistent_estimate_sigma2U_general}}\label{appendix:prop:consistent_estimate_sigma2U_general}
%		\begin{proof}
%			Assumption 1 enables us to apply the Inverse Mapping Theorem. The result is then validated by applying the Continuous Mapping Theorem (see Theorem 2.3 in~\cite{VanderVaart_AsymptoticStatistics}) over the results from Proposition~\ref{prop:RV_Estimate_var+cov(1)} and Proposition~\ref{prop:consistent_estimate_var_U_dependence}, which follows from Assumption 2.
%		\end{proof}

\setcounter{equation}{0}
		
\section{Proof of Proposition~\ref{prop:consistent_nonpar_sigma2U}}
\label{appendix:prop:consistent_nonpar_sigma2U}

\begin{proof}
For any fixed $j$, ~\eqref{eq:RV{Y,Y}_Op} implies $\widehat{\gamma(j)}_n - \gamma(j) =O_p\myp{\max\left\{\sqrt{\Delta_nj_n},j_n^{-v/2}\right\}}$.
%From the proofs of Propositions~\ref{prop:RV_Estimate_var+cov(1)} and~\ref{prop:consistent_estimate_var_U_dependence},
%we know that, for $j\leq i_n$,
%	\begin{align*}
%			&\widehat{\RV{Y,Y}}_n(j) = \var{U} - \gamma(j) + O_p(\sqrt{\Delta_n});\quad\widehat{\RV{Y,Y}}_n(i_n) = \var{U}  + O_p(\sqrt{\Delta_n}).
%	\end{align*}
Therefore,
\begin{align*}
\widehat{\sigma^2_U} -  \sum_{j=-i_n}^{i_n}\gamma(j) = O_p\myp{\max\left\{\sqrt{\Delta_nj_ni_n^2},j_n^{-v/2}i_n\right\}}.
\end{align*}
Now the result follows given that $\Delta_nj_n^3\rightarrow 0, i_n\leq j_n,i_n\rightarrow \infty, v>2.$
%			\begin{align*}
%			\widehat{\sigma^2_U} & = (2i_n+1)\widehat{\RV{Y,Y}}_n(i_n)-2\sum_{j=1}^{i_n}\widehat{\RV{Y,Y}}_n(j)\\
%			& = \var{U}+2\sum_{j=1}^{i_n}\gamma(j)+O_p\myp{i_n/\sqrt{n}}\\
%			& = \var{U} + 2\sum_{j=1}^{\infty}\gamma(j) + O(\rho^{i_n})+O_p\myp{i_n/\sqrt{n}}\\
%			& = \sigma^2_U  +O_p\myp{i_n/\sqrt{n}}.
%			\end{align*}
%Hence, we conclude that $\widehat{\sigma^2_U} \Pconverge\sigma^2_U$.
		\end{proof}

\setcounter{equation}{0}

\section{Proof of Theorem~\ref{thm:consistency}}\label{appendix:thm_consistency}

The proof of this theorem basically follows~\cite{podolskij2009pre-averaging-1},
but we need to deal with generally dependent noise.
			
First, we introduce some notation:
\begin{align}
		&\beta^n_m := n^{1/4}\myp{\sigma_{\frac{m-1}{M_n}}\preavg{W} + \preavg{U}}\label{eq:beta^n_m};\\
		%&\widetilde{\beta^n_m}  = n^{1/4}\myp{\sigma_{\frac{m-1}{M_n}}\preavglag{W} + \preavglag{U}};\\
		&\xi^n_m :=n^{1/4}\preavg{Y} - \beta^n_m;\label{eq:xi^n_m}\\
		%&\widetilde{\xi^n_m} =n^{1/4}\preavglag{Y} - \widetilde{\beta^n_m};\\
		&\eta^n_m :=\frac{n^{r/4}}{2c}\conexp{\abs{\preavg{Y}}^r}{\mathcal{F}_{\frac{m-1}{M_n}}};\\
		&\widetilde{\eta^n_m} :=\frac{\mu_r}{2c}\myp{\frac{2c}{3}\sigma^2_{\frac{m-1}{M_n}} + \frac{2}{c}\sigma^2_U}^{\frac{r}{2}};\\
		&\Pbv^n:=\sum_{m=1}^{M_n}\eta^n_m;\\
		&\widetilde{\Pbv}^n :=\sum_{m=1}^{M_n}\widetilde{\eta^n_m}.
\end{align}
		%Note that the $\beta^n_m$ is an approximation of $n^{1/4}\preavg{Y}$ while $\xi^n_m$ is a measure of the approximation error; similar interpretation applies to $\widetilde{\beta^n_m} $ and $\widetilde{\xi^n_m}$.
Then, we state the following lemma:
\begin{lemma}\label{lemma:stochastic_Order_X_Y}
For any $q>0$, there is some constant $C_q>0$ (depending on $q$), such that $\forall m$:
	\begin{equation}
	\label{eq:StochasticOrderXbar}
	\expect{\abs{\xi^n_m}^q} + \expect{\abs{n^{1/4}\preavg{X}}^q}<C_q;
	\end{equation}
and the following holds for $q\in (0,2r+\varepsilon)$ with $\varepsilon$ as defined in Theorem~\ref{thm:consistency}:
	\begin{equation}
	\label{eq:StochasticOrderYbar}
	\expect{\abs{\beta^n_m}^q} + \expect{\abs{n^{1/4}\preavg{Y}}^q}<C_q.
	\end{equation}
\end{lemma}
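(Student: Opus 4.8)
\textbf{Proof proposal for Lemma~\ref{lemma:stochastic_Order_X_Y}.}
The plan is to estimate each of the four quantities by splitting $\preavg{Y} = \preavg{X} + \preavg{U}$ and treating the efficient-price and noise contributions separately, exploiting the independence of $X$ and $U$ throughout. First I would handle $\preavg{X}$. From the definition \eqref{eq:V_m^k_n}, $\preavg{X}$ is an average of $k_n+1$ consecutive increments $\Delta^n_{i,k_n}X$ over a window of length $O(k_n\Delta_n) = O(n^{-1/2})$. Because $a$ and $\sigma$ are bounded (localization), the estimate \eqref{eq:Classic_Est_X_p} gives $\expect{|\Delta^n_{i,k_n}X|^q} \leq C_q (k_n\Delta_n)^{q/2}$; telescoping the sum as in the proof of Proposition~\ref{prop:pre-averaged noise} and using the Burkholder--Davis--Gundy inequality for the continuous martingale part (plus a trivial bound for the drift) yields $\expect{|\preavg{X}|^q}\leq C_q (k_n\Delta_n)^{q/2} \asymp C_q n^{-q/2}$, hence $\expect{|n^{1/4}\preavg{X}|^q}\leq C_q$ for every $q>0$. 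A cleaner route is to note $n^{1/4}\preavg{X} = n^{1/4}\sigma_{\frac{m-1}{M_n}}\preavg{W} + n^{1/4}\preavg{(X - \sigma_{\cdot}W)}$ where the first term is (conditionally) a centered Gaussian with variance $O(1)$ by the exact computation $\expect{(\preavg{W})^2} \asymp k_n\Delta_n/3$, and the remainder is controlled by the c\`adl\`ag/boundedness of $\sigma$.

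Next I would bound $\xi^n_m = n^{1/4}\preavg{Y} - \beta^n_m = n^{1/4}(\preavg{X} - \sigma_{\frac{m-1}{M_n}}\preavg{W})$, which is purely an efficient-price quantity: it is $n^{1/4}\preavg{(X - X_0 - \int \sigma_{\frac{m-1}{M_n}}\diff W)}$ restricted to the relevant window. Writing $X_t - \sigma_{\frac{m-1}{M_n}}W_t = \int_0^t a_s\diff s + \int_0^t(\sigma_s - \sigma_{\frac{m-1}{M_n}})\diff W_s$, the drift contributes $O(k_n\Delta_n)$ deterministically and the stochastic part has conditional variance $\leq \expect{\int (\sigma_s-\sigma_{\frac{m-1}{M_n}})^2\diff s}$ over a window of length $O(k_n\Delta_n)$; since $\sigma$ is bounded this is $O(k_n\Delta_n)$, and BDG upgrades this to $\expect{|\xi^n_m|^q}\leq C_q$ for all $q>0$. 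For $\beta^n_m = n^{1/4}(\sigma_{\frac{m-1}{M_n}}\preavg{W} + \preavg{U})$ I would use boundedness of $\sigma$ together with the Gaussian bound on $\preavg{W}$ for the first term, and for the second term the moment bound $\expect{|n^{1/4}\preavg{U}|^q}\leq C_q$. The latter follows because $U$ has bounded moments of all orders and is strongly mixing with polynomially decaying coefficients: $n^{1/4}\preavg{U}$ is (up to an $O_p(\sqrt{\Delta_n})$ term, cf. \eqref{eq:pre_U_order}) $\frac{1}{\sqrt{2k_n+1}}\sum U^n_i$ times a bounded factor, and moment bounds for normalized partial sums of strongly mixing stationary sequences with all moments finite (Rio-type / Yokoyama inequalities, or directly Lemma VIII 3.102 of~\cite{jacod1987limit} iterated) give a uniform bound on all $q$-th moments. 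Combining the triangle inequality, $\expect{|n^{1/4}\preavg{Y}|^q}\leq C_q(\expect{|\beta^n_m|^q} + \expect{|\xi^n_m|^q})<\infty$ for $q$ in the required range, and similarly for $n^{1/4}\preavg{X}$.

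The main obstacle I anticipate is the noise moment bound $\expect{|n^{1/4}\preavg{U}|^q}\leq C_q$ \emph{uniformly in $m$ and for all $q$ up to $2r+\varepsilon$} --- this is precisely where the strong mixing hypothesis (rather than a martingale or independence structure) bites. One must verify that the window sums $\sum_{i=(2m-2)k_n}^{2mk_n}U^n_i$ have $q$-th moments bounded by $C_q k_n^{q/2}$, uniformly over the location of the window; stationarity of $U$ makes the bound independent of $m$, and the polynomial mixing decay $\alpha_h\leq C/h^v$ with $v>2$ (together with all moments finite) is exactly the input to the classical moment inequalities for mixing partial sums. I would cite these (or the relevant lemma from~\cite{jacod1987limit}, Chapter VIII) rather than reprove them. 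A secondary, more routine obstacle is keeping track of the fact that $\preavg{Y}$, $\preavg{X}$ mix windows of slightly different structure (overlapping vs. non-overlapping in the telescoped form), but this only affects constants. Once both pieces are in hand the lemma follows by elementary inequalities $(|a|+|b|)^q\leq 2^q(|a|^q+|b|^q)$.
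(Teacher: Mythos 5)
Your proposal is correct and follows the same overall skeleton as the paper's argument (treat the price terms and the noise term separately, then combine via the $c_r$-inequality $\expect{\abs{n^{1/4}\preavg{Y}}^q}\leq C_q(\expect{\abs{n^{1/4}\preavg{X}}^q}+\expect{\abs{n^{1/4}\preavg{U}}^q})$), but it fills in the two halves with different tools. For the purely price-driven quantities $\xi^n_m$ and $n^{1/4}\preavg{X}$ the paper simply cites Lemma 1 of \cite{podolskij2009pre-averaging-1}, whereas you re-derive the bounds from localization, BDG and the boundedness of $a,\sigma$; that is fine, though note that \eqref{eq:Classic_Est_X_p} as stated only gives $C_p\,\conexp{T-S}{\mathcal{F}_S}$, so the sharper rate $(k_n\Delta_n)^{q/2}$ you use should be attributed to BDG with bounded coefficients rather than to that display. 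For the noise term the routes genuinely differ: the paper invokes Proposition~\ref{prop:pre-averaged noise} together with the fact that convergence in law plus uniformly bounded moments yields convergence of moments, which is terse and arguably presupposes the very uniform moment bounds being proved; your route via Yokoyama/Rio-type moment inequalities for partial sums of stationary strongly mixing sequences (or an iteration of Lemma VIII 3.102 of \cite{jacod1987limit}), using stationarity to get uniformity in $m$, is more self-contained and pins down exactly where the mixing hypothesis enters. The one caveat worth recording is that such inequalities bound the $q$-th moment of the normalized block sum only when the polynomial decay exponent is large relative to $q$ (roughly $v>q/2$ even with all moments of $U$ finite), so with $v>2$ alone the claim for $q$ up to $2r+\varepsilon$ implicitly requires faster decay when $r$ is large; this is consistent with the paper's remark after Assumption~\ref{assumption:dependent_noise} on the interplay between $v$ and the targeted moments (and with the stronger requirement $v>4$ in Theorem~\ref{thm:CLT}), but it deserves a sentence in a complete write-up.
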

\begin{proof}
[Proof of Lemma~\ref{lemma:stochastic_Order_X_Y}]
The boundedness of moments of $\xi^n_m$ and $n^{1/4}\preavg{X}$ (which don't depend on the noise)
follows from Lemma 1 in~\cite{podolskij2009pre-averaging-1}.
			
Now we show the boundedness of $\expect{\abs{n^{1/4}\preavg{Y}}^q}$ for $0<q<2r+\varepsilon$.
We note (see Proposition 3.8 in~\cite{white2000asymptotic}) that there is some $C_q$ so that the following is true:
\[
		\expect{\abs{n^{1/4}\preavg{Y}}^q}\leq C_q\myp{\expect{\abs{n^{1/4}\preavg{X}}^q} + \expect{\abs{n^{1/4}\preavg{U}}^q}}.
\]
Boundedness of $\expect{\abs{n^{1/4}\preavg{X}}^q}$ has already been established,
while $\expect{\abs{n^{1/4}\preavg{U}}^q}$ is bounded by Proposition~\ref{prop:pre-averaged noise}
and a well known fact that convergence in distribution implies convergence in moments under uniformly bounded moments condition, see, e.g., Theorem 4.5.2 of~\cite{chung2001course}. %Lemma~\ref{lemma:Thm452_Chung} and the condition on $q$. %Note that this is the key difference from the original proofs in~\cite{podolskij2009pre-averaging-1} and~\cite{jacod2009pre-averaging-2}.
A similar proof holds for $\expect{\abs{\beta^n_m}^q}$.% and $\expect{\abs{\widetilde{\beta^n_m}}^q}$.
\end{proof}
		
\begin{proof}[Proof of Theorem~\ref{thm:consistency}]
We present the proof in several steps.
\begin{enumerate}[(i)]
\item We first prove that
\begin{equation}
\label{eq:Mgl_diff_Pconverge_0}
\Pbv(Y,r)_n\ - \frac{1}{M_n}\Pbv^n\Pconverge 0.
\end{equation}
First, recall our choice of $M_n = \left\lfloor\frac{\sqrt{n}}{2c}\right\rfloor$.
Next, observe that the difference on the left-hand side of \eqref{eq:Mgl_diff_Pconverge_0} is in fact a sum of \emph{martingale differences}:
\begin{align*}
&\Pbv(Y,r)_n\ - \frac{1}{M_n}\Pbv^n \\
=& \sum_{m=1}^{M_n}\frac{1}{\sqrt{n}}\myp{\abs{n^{\frac{1}{4}}\preavg{Y}}^r-\conexp{\abs{n^{\frac{1}{4}}\preavg{Y}}^r }{\mathcal{F}_{\frac{m-1}{M_n}}}}.
\end{align*}
In light of Lemma 2.2.11 in~\cite{jacod2011discretization}, it suffices to show that
\begin{align}\label{eq:squared_Mgl_Pconverge_0}
\frac{1}{n}\sum_{m=1}^{M_n}\conexp{\abs{n^{\frac{1}{4}}\preavg{Y}}^{2r}}{\mathcal{F}_{\frac{m-1}{M_n}}}\Pconverge 0.
\end{align}
But this follows from the boundedness established in Lemma~\ref{lemma:stochastic_Order_X_Y} and the choice of $M_n$.
%				An application of H\"older's inequality yields
%				\begin{align*}
%				\expect{\abs{n^{\frac{1}{4}}\preavg{Y}}^{2r}}
%				\leq \myp{\expect{\abs{n^{\frac{1}{4}}\preavg{Y}}^{2(r+l)}}}^{\frac{l}{r+l}}\myp{\expect{\abs{n^{\frac{1}{4}}\preavglag{Y}}^{2(r+l)}}}^{\frac{r}{r+l}}.
%				\end{align*}And we already proved that the two terms
 %The RHS of the inequality are uniformly bounded in $m$ in Lemma~\ref{lemma:stochastic_Order_X_Y}. Since all terms are nonnegative, the conditional expectation is also uniformly bounded whence the LHS of~\eqref{eq:squared_Mgl_Pconverge_0} is of order $O_p(n^{-1/2})$ by our choice of $M$. This proves~\eqref{eq:Mgl_diff_Pconverge_0}.
\item Next, we prove that
\begin{equation}\label{eq:MBV-MBVtilde=0}
\frac{1}{M_n}\Pbv^n-\frac{1}{M_n}\widetilde{\Pbv}^n\Pconverge 0.
\end{equation}
To prove this, we proceed in several steps:
\begin{enumerate}
\item We first note that the error of approximating $n^{1/4}\preavg{Y}$ by $\beta^n_m$,
denoted by $\xi^n_m$ in~\eqref{eq:xi^n_m}, is small in the sense that
\begin{equation}
\label{eq:approxi_err_small}
\frac{1}{M_n}\sum_{m=1}^{M_n}\expect{\abs{\xi^n_m}^2}\rightarrow 0.
\end{equation} %and
%					\begin{equation}
%					\label{eq:approxi_lag_err_small}
%					\frac{1}{M_n}\sum_{m=1}^{M_n}\expect{\abs{\xi^n_{m+1}}^2}\rightarrow 0
%					\end{equation}
For a detailed proof, see~\cite{podolskij2009pre-averaging-1}.
(Note that our assumptions on the noise process are different from~\cite{podolskij2009pre-averaging-1},
but the noise terms don't appear in $\xi^{n}_{m}$.)
%					\item Now we show \begin{equation}
%					\label{eq:approxi_err2_small}
%					\frac{1}{M_n}\sum_{m=1}^{M_n}\expect{\abs{\beta^n_{m+1}-\widetilde{\beta^n_m}}^2}\rightarrow 0.
%					\end{equation}This is easily done by
%					\begin{align*}
%					\frac{1}{M_n}\sum_{m=1}^{M_n}\expect{\abs{\beta^n_{m+1}-\widetilde{\beta^n_m}}^2}
%					= & \frac{1}{M_n}\sum_{m=1}^{M_n}\expect{\abs{n^{1/4}\myp{\sigma_{\frac{m}{M_n}} - \sigma_{\frac{m-1}{M_n}} }\preavglag{W}}^2}\\
%					\overset{\eqref{eq:asym_var_preW_preU}}{\leq } & \frac{C}{M_n}\sum_{m=1}^{M_n}\expect{\abs{\sigma_{\frac{m}{M_n}} - \sigma_{\frac{m-1}{M_n}} }^2}\\
%					\leq  & \frac{C}{M_n}\sum_{m=1}^{M_n}\myp{\expect{\abs{\sigma_{\frac{m}{M_n}} - \sigma_s}^2}+\expect{\abs{\sigma_{\frac{m-1}{M_n}}-\sigma_s }^2}}.
%					\end{align*}Then~\eqref{eq:approxi_err2_small} follows by rewriting the integration and  applying the Lebesgue's Dominated Convergence Theorem again, as we did in previous part.
\item Next, define the approximation error %of by modulated bi-power variation as
\begin{align*}
\zeta^n_m := \frac{\abs{n^{1/4}\preavg{Y}}^r - \abs{\beta^n_m}^r}{2c}.
\end{align*}
We note that this error is also small:
\begin{equation}\label{eq:approxi_err_zeta}
\frac{1}{M_n}\sum_{m=1}^{M_n}\expect{\abs{\zeta^n_m}} \rightarrow 0,
\end{equation}
which follows from
\begin{equation}\label{eq:approxi_err_zeta2}
\frac{1}{M_n}\sum_{m=1}^{M_n}\expect{\abs{\zeta^n_m}^2} \rightarrow 0.
\end{equation}
This, in turn, can be proved following~\cite{podolskij2009pre-averaging-1}.
					%Now we can apply Lemma 5.4 in~\cite{barndorff2006central} since we proved~\eqref{eq:approxi_err_small},~\eqref{eq:approxi_lag_err_small}, %, and~\eqref{eq:approxi_err2_small};
					 %also we need the results from Lemma~\ref{lemma:stochastic_Order_X_Y}.
\eqref{eq:approxi_err_zeta} then follows, and it implies
\begin{equation}\label{eq:zeta_P_converge_0}
\frac{1}{M_n}\sum_{m=1}^{M_n}\conexp{\zeta^n_m}{\mathcal{F}_{\frac{m-1}{M_n}}} \Pconverge 0,
\end{equation}
by the Markov inequality.
					
\item Now we show the following:
\begin{equation}
\label{eq:conditional_PBV_op(1)}
\conexp{\abs{\beta^n_m}^r}{\mathcal{F}_{\frac{m-1}{M_n}}} ={\mu_r}\myp{\frac{2c}{3}\sigma^2_{\frac{m-1}{M_n}} + \frac{2\sigma^2_U}{c}}^{\frac{r}{2}}+o_p(1),
\end{equation}
which holds uniformly in $m$.
Recall that $r\geq 2$ is an even integer.
Let $r_n\rightarrow\infty$ but $r_n=o(n^{1/2})$.
Denote %To simplify notations, we let $s^n_m:=(2m-2)k_n+r_n$ and denote
\begin{align*}
\overline{\beta}^{n}_{m-1,r_n} & = \frac{n^{1/4}}{k_n+1}\myp{\sum_{i=(2m-2)k_n}^{(2m-2)k_n+r_n}\sigma_{\frac{m-1}{M_n}}\myp{W^n_{i+k_n}   - W^n_{i}} +\myp{U^n_{i+k_n}  -U^n_{i} }}\\
&=: n^{1/4}\myp{\sigma_{\frac{m-1}{M_n}} \overline{W}^{n}_{m-1,r_n} + \overline{U}^{n}_{m-1,r_n} };\\
\overline{\beta}^{n}_{r_n,m} & = \frac{n^{1/4}}{k_n+1}\myp{\sum_{i=(2m-2)k_n+r_n+1}^{(2m-1)k_n}\sigma_{\frac{m-1}{M_n}}\myp{W^n_{i+k_n}   - W^n_{i}} +\myp{U^n_{i+k_n}  -U^n_{i} }}\\
&=: n^{1/4}\myp{\sigma_{\frac{m-1}{M_n}} \overline{W}^{n}_{r_n,m} + \overline{U}^{n}_{r_n,m} }.
%\widetilde{\beta}^{n}_{m,r_n} & = \frac{n^{1/4}}{k_n+1}\myp{\sum_{i=mk_n}^{2mk_n+r_n}\sigma_{\frac{m-1}{M_n}}\myp{W^n_{i+k_n}   - W^n_{i}} +\myp{U^n_{i+k_n}  -U^n_{i} }}\\
					%&=: n^{1/4}\myp{\sigma_{\frac{m-1}{M_n}} \overline{W}^{n}_{m,r_n} + \overline{U}^{n}_{m,r_n} }.\\
					%		 \widetilde{\beta}^{n}_{r_n,m+1} & = \frac{n^{1/4}}{k_n+1}\myp{\sum_{i=2mk_n+r_n+1}^{(2m+1)k_n}\sigma_{\frac{m-1}{M_n}}\myp{W^n_{i+k_n}   - W^n_{i}} +\myp{U^n_{i+k_n}  -U^n_{i} }}\\
					%		 &=: n^{1/4}\myp{\sigma_{\frac{m-1}{M_n}} \overline{W}^{n}_{r_n,m+1} + \overline{U}^{n}_{r_n,m+1} }.		
\end{align*}
Then, we have $\beta^n_m =\overline{\beta}^{n}_{m-1,r_n} + \overline{\beta}^{n}_{r_n,m}$.
Furthermore, by our construction, $\overline{\beta}^{n}_{m-1,r_n}=o_p(1)$ and $\overline{\beta}^{n}_{r_n,m}$ has the same asymptotic distribution as $\beta^n_m$, which can be derived from the asymptotic distributions of $n^{1/4}\overline{U}^n_m$ and $n^{1/4}\overline{W}^n_m$, and the independence assumption between $X$ and $U$.
                    %Intuitively $\overline{\beta}^{n}_{m-1,r_n}$ is a negligible term, which can be dropped to render some asymptotic independence to deal with dependent noise.
%					
					
By the Mean Value Theorem, we have
\begin{align*}
\conexp{\myp{\beta^n_m }^r-\myp{\overline{\beta}^n_{r_n,m} }^r}{\infor{\frac{m-1}{M_n}}}
=   \conexp{r\myp{\overline{\beta}^n_{r_n,m}}^{r-1}\myp{\overline{\beta}^n_{m-1,r_n}} }{\infor{\frac{m-1}{M_n}}} + o_p(1).
%& + C \conexp{\abs{\overline{\beta}^n_{r_n,m} }^r\myp{\abs{\widebar{\beta}^n_{m,r_n}}^l + \abs{\widetilde{\beta}^n_{r_n,m+1}}^{l-1}\abs{\widetilde{\beta}^n_{m,r_n}} } }{\infor{\frac{m-1}{M_n}}}
\end{align*}
The moment conditions and an application of Cauchy-Schwarz inequality yields %$\expect{\abs{\overline{\beta}^n_{m-1,r_n}}^r}\rightarrow 0$ and by
%we have
$$\conexp{\myp{\overline{\beta}^n_{r_n,m}}^{r-1}\myp{\overline{\beta}^n_{m-1,r_n}} } {\infor{\frac{m-1}{M_n}}}=o_p(1).$$
Thus,
\begin{equation}\label{eq:r_big_blk_op(1)_1}
\conexp{\myp{\beta^n_m }^r}{\infor{\frac{m-1}{M_n}}} = \conexp{\myp{\overline{\beta}^n_{r_n,m} }^r}{\infor{\frac{m-1}{M_n}}}  + o_p(1).
\end{equation}
					
For any $l\leq r$, define $\overline{U}^{n,l}_{r_n,m}:=\myp{n^{1/4}\overline{U}^n_{r_n,m}}^{l}$, and let
$$C_l: = \expect{\myp{\conexp{\overline{U}^{n,l}_{r_n,m}}{\infor{\frac{m-1}{M_n}}} - \expect{\overline{U}^{n,l}_{r_n,m}}}^2}.$$
By the JS-Lemma, we have $C_l\leq Cr_n^{-v}$. Let $$\Lambda_l: = \frac{\conexp{\overline{U}^{n,l}_{r_n,m}}{\infor{\frac{m-1}{M_n}}} - \expect{\overline{U}^{n,l}_{r_n,m}}}{\sqrt{C_l}};$$ note that $\expect{\Lambda_l^2} = 1$. Thus,
\begin{equation}
\conexp{\overline{U}^{n,l}_{r_n,m}}{\infor{\frac{m-1}{M_n}}}=\expect{\overline{U}^{n,l}_{r_n,m}} + \sqrt{C_l}\Lambda_l.
\end{equation}
Therefore, we can substitute the conditional moments by the unconditional moments and we obtain the following ($C^k_r =\frac{r!}{k!(r-k)!}$ denotes the binomial coefficient):
\begin{align*}
&\conexp{\myp{\overline{\beta}^n_{r_n,m} }^r}{\infor{\frac{m-1}{M_n}}}\\
=& \conexp{\sum_{k=0}^{r} C^k_r\sigma^k_{\frac{m-1}{M_n}} \myp{n^{1/4}\overline{W}^n_{r_n,m}}^k \myp{n^{1/4}\overline{U}^n_{r_n,m}}^{r-k}}{\infor{\frac{m-1}{M_n}}} \\
=& \sum_{k=0}^{r} C^k_r\sigma^k_{\frac{m-1}{M_n}} \conexp{\myp{n^{1/4}\overline{W}^n_{r_n,m}}^k}{\sigma_{\frac{m-1}{M_n}}} \conexp{\myp{n^{1/4}\overline{U}^n_{r_n,m}}^{r-k}}{\infor{\frac{m-1}{M_n}}}\\
=& \conexp{\myp{\overline{\beta}^n_{r_n,m} }^r}{{\sigma_{\frac{m-1}{M_n}}}} +\sum_{k=0}^{r} C_k^r\sigma^k_{\frac{m-1}{M_n}} \conexp{\myp{n^{1/4}\overline{W}^n_{r_n,m}}^k}{\sigma_{\frac{m-1}{M_n}}} \sqrt{C_{r-k}}\Lambda_{r-k}.
\end{align*}
Clearly, the last term is $o_p(1)$, and together with~\eqref{eq:r_big_blk_op(1)_1}, we have
\begin{equation}\label{eq:key_condi_approxi}
\begin{split}
\conexp{\myp{\beta^n_m }^r}{\infor{\frac{m-1}{M_n}}} & = \conexp{\myp{\overline{\beta}^n_{r_n,m} }^r}{\sigma_{\frac{m-1}{M_n}}} +o_p(1)\\
&  = \mu_r\myp{\frac{2c}{3}\sigma^2_{\frac{m-1}{M_n}} + \frac{2\sigma^2_U}{c}}^{\frac{r}{2}}+o_p(1).
\end{split}
\end{equation}
The last equality is a consequence of the asymptotic distribution of $\beta^n_m$.%, the moments condition and Lemma~\ref{lemma:Thm452_Chung}.
					
\item Now \eqref{eq:MBV-MBVtilde=0} follows from~\eqref{eq:zeta_P_converge_0} and~\eqref{eq:key_condi_approxi}.
\end{enumerate}
\item Following Proposition 2.2.8 in~\cite{jacod2011discretization}, we see that the Riemann approximation converges:
\begin{equation}
\frac{1}{M_n}\sum_{m=1}^{M_n}\widetilde{\Pbv^n}\Pconverge \Pbv(Y,r).
\label{eq:Riem}
\end{equation}
Recall that we already proved that
\[
\Pbv(Y,r)_n\ - \frac{1}{M_n}\Pbv^n\Pconverge 0;\quad\mathrm{and}\quad
\frac{1}{M_n}\Pbv^n-\frac{1}{M_n}\widetilde{\Pbv}^n\Pconverge 0;
\]
in previous steps.
Now it is immediate to conclude that
\[
\Pbv(Y,r)_n\Pconverge \Pbv(Y,r).
\]
This finalizes the proof of Theorem~\ref{thm:consistency}.
\end{enumerate}
\end{proof}

\setcounter{equation}{0}

\section{Robustness to Irregular Sampling}\label{sec:IrregularSampling}

In this section, we show that the consistency results for integrated volatility
in Theorem~\ref{thm:consistency} and Corollary~\ref{corollary:consistency_IV}
can be extended to irregular sampling times for the case $r=2$,
by adapting the approach in Appendix C of~\cite{christensen2014jump-high-fre}
to allow for serially dependent noise in our general setting (recall $Y^n_i = X_{t^n_i} + U^n_i$).
Let $f:[0,1]\mapsto[0,1]$ be a strictly increasing map with Lipschitz continuous first order derivatives.  Let $f(0)=0$ and $f(1)=1$.
Suppose that the observation times are $\{t^n_i=f(i/n):0\leq i\leq n \}$.
Let $C_f' = \max_{x\in[0,1]}\abs{f'(x)}$.
Note that $C'_f<\infty$ by the continuity of $f'$.

First, we note that the asymptotic results related to the noise process we derived so far
still hold under irregular sampling,
because the noise is indexed by $i$ rather than by $t_{i}$ in our setting.
The proof then proceeds in several steps:

\begin{enumerate}
\item We first provide the analogs of Lemma~\ref{lemma:stochastic_Order_X_Y}
and step (i) in the proof of Theorem~\ref{thm:consistency}.
Assume $q\geq 1$.
Then,
\begin{align*}
	\expect{\abs{\xi^n_m}^q}& = \expect{\abs{\frac{n^{1/4}}{k_n + 1}\sum_{i=(2m-2)k_n}^{(2m-1)k_n}X^n_{i+k_n} - X^n_{i} -\sigma_{t^n_{(2m-2)k_n}} \myp{W^n_{i+k_n} - W^n_{i}} }^q }\\
	&\leq\frac{n^{\frac{q}{4}}}{k_n+1}\sum_{i=(2m-2)k_n}^{(2m-1)k_n}\expect{\abs{X^n_{i+k_n} - X^n_{i} -\sigma_{t^n_{(2m-2)k_n}} \myp{W^n_{i+k_n} - W^n_{i}}}^q}\\
	&=\frac{n^{\frac{q}{4}}}{k_n+1}\sum_{i=(2m-2)k_n}^{(2m-1)k_n}\expect{\abs{
			\int_{t^n_i}^{t^n_{i+k_n}} \myp{\alpha_s\diff s + \myp{\sigma_s - \sigma_{t^n_{(2m-2)k_n}} }\diff W_s}}^q}\\
	&\leq C_\alpha (C_f')^q n^{-\frac{q}{4}}+ \frac{C_qn^{\frac{q}{4}}}{k_n+1}\sum_{i=(2m-2)k_n}^{(2m-1)k_n}\expect{\abs{
			\int_{t^n_i}^{t^n_{i+k_n}}  \myp{\sigma_s - \sigma_{t^n_{(2m-2)k_n}} }\diff W_s}^q}\\
	&\leq C+ \frac{C_qn^{\frac{q}{4}}}{k_n+1}\sum_{i=(2m-2)k_n}^{(2m-1)k_n}\expect{\myp{
			\int_{t^n_i}^{t^n_{i+k_n}}  \abs{\sigma_s - \sigma_{t^n_{(2m-2)k_n}} }^2\diff s}^{q/2}}\\
	&\leq C.
\end{align*}
The second inequality follows from the boundedness of $\alpha$ and $C_f'$.
The third inequality is an application of the Burkholder-Davis-Gundy inequality.
The last inequality follows from the fact that $\sigma$ is bounded.
Similarly, we can prove that $\expect{\abs{n^{1/4}\preavg{X}}^q}$ is bounded.
For $q\in(0,1)$, the result is immediate using Jensen's inequality.
Now the boundedness of $\expect{\abs{n^{1/4}\preavg{Y}}^q}$, $q\in (0,2r+\varepsilon)$,
is obvious as the asymptotic distribution of the pre-averaged noise (which is indexed by $i$) does not change under irregular sampling.
\item Next, we prove the analog of step (ii) item (a) in the proof of Theorem~\ref{thm:consistency}.
We have that
\begin{align*}
	&\expect{\abs{\xi^n_m}^2}\\
	\leq& \sum_{i=(2m-2)k_n}^{(2m-1)k_n}\frac{\expect{\abs{n^{\frac{1}{4}}\myp{  \myp{X^n_{i+k_n} - X^n_{i} } -\sigma_{t_{(2m-2)k_n}^n}\myp{W^n_{i+k_n} - W^n_{i} }} }^2}}{k_n+1}\\
	=& \sum_{i=(2m-2)k_n}^{(2m-1)k_n}\frac{\expect{\abs{n^{\frac{1}{4}}\myp{  \int_{t^n_i}^{t^n_{i+k_n}} \alpha_s\diff s + \int_{t^n_i}^{t^n_{i+k_n}} \myp{\sigma_s -\sigma_{t_{(2m-2)k_n}^n}} \diff W_s }} }^2}{k_n+1}\\
	\leq &\sum_{i=(2m-2)k_n}^{(2m-1)k_n}\frac{2\expect{n^{\frac{1}{2}}\myp{  \int_{t^n_i}^{t^n_{i+k_n}} \alpha_s\diff s}^2 +n^{1/2} \int_{t^n_i}^{t^n_{i+k_n}} \myp{\sigma_s -\sigma_{t_{(2m-2)k_n}^n}}^2 \diff s } }{k_n+1}\\
	%\leq &\frac{{C_f'}^2C_\alpha}{\sqrt{n}} +\frac{ \sum_{i=(2m-2)k_n}^{(2m-1)k_n} 2\expect{n^{1/2}\int_{t^n_i}^{t^n_{i+k_n}} \myp{\sigma_s -\sigma_{t_{(2m-2)k_n}^n}}^2 \diff s } }{k_n+1}\\
	\leq &\frac{{C_f'}^2C_\alpha}{\sqrt{n}} + 2n^{1/2}\expect{\int_{t^n_{(2m-2)k_n}}^{t_{2mk_n}^{n}} \myp{\sigma_s -\sigma_{t_{(2m-2)k_n}^n}}^2 \diff s } .
\end{align*}
The second inequality is due to the Cauchy's inequality and It\^o's isometry.
The third inequality is a consequence of the boundedness of $\alpha,|f'|$ and our choice of $k_n$;
it is obtained by taking $i$ to be the lower and upper bound.
Now we have
\begin{align*}
	\frac{1}{M_n}\sum_{m=1}^{M_n}\expect{\abs{\xi^n_m}^2}&\leq O(1/\sqrt{n}) + \frac{2n^{1/2}}{M_n}\sum_{m=1}^{M_n}\expect{\int_{t^n_{(2m-2)k_n}}^{t_{2mk_n}^{n}} \myp{\sigma_s -\sigma_{t_{(2m-2)k_n}^n}}^2 \diff s }\\
	& = O(1/\sqrt{n}) + 4c\int_{0}^1\expect{ \myp{\sigma_s -\sigma_{\frac{\lfloor M_ns \rfloor}{M_n}}}^2} \diff s.
\end{align*}
Since $\sigma_{\frac{\lfloor M_ns \rfloor}{M_n}}\rightarrow \sigma_s $-a.s., and $\sigma$ is bounded,
upon applying Lebesgue's Dominated Convergence Theorem, we obtain the analog of~\eqref{eq:approxi_err_small}.
We note that the analog of item (b) of step (ii) in the proof of Theorem~\ref{thm:consistency}
is directly obtained because (6.10) in~\cite{podolskij2009pre-averaging-1} holds.
\item We now provide the analog of~\eqref{eq:key_condi_approxi}.
First, we note that all the steps in proving~\eqref{eq:key_condi_approxi} hold
except those pertaining to the conditional variance of the pre-averaging Brownian motion.
Next, we show that
\begin{equation*}
\var{n^{1/4}\preavg{W}} = f'((2m-2)k_n/n)\frac{2c}{3}+o(1).
\end{equation*}
%In the following, we apply Mean Value Theorem to $f$:
By the Lipschitz continuity of $f'$ we obtain:
\begin{align*}
	&\var{\sum_{i=2(m-1)k_n}^{(2m-1)k_n}\myp{W^n_{i+k_n} - W^n_{i}}}\\
	 = &\sum_{i=2(m-1)k_n}^{(2m-1)k_n}\var{W^n_{i+k_n} - W^n_{i}} + \sum_{i\neq j}\cov{W^n_{i+k_n} - W^n_{i},W^n_{j+k_n} - W^n_{j}}\\
	 = &\sum_{i=(2m-2)k_n}^{(2m-1)k_n}(t^n_{i+k_n} - t^n_i) + 2\sum_{i=(2m-2)k_n}^{(2m-1)k_n-1}\sum_{j>i}(t^n_{i+k_n}-t^n_j)\\
	 = &\sum_{i=(2m-2)k_n}^{(2m-1)k_n}\myp{f'(i/n)\frac{k_n}{n}+o(k_n/n)} + 2\sum_{i=(2m-2)k_n}^{(2m-1)k_n-1}\sum_{j>i} \myp{f'(j/n)\frac{i+k_n-j}{n}+o(k_n/n)}\\
	 = & f'\myp{\frac{(2m-2)k_n}{n}}\sum_{i=(2m-2)k_n}^{(2m-1)k_n}\myp{\frac{k_n}{n}+o(k_n/n)} \\
	   & +  2\sum_{i=(2m-2)k_n}^{(2m-1)k_n-1}\sum_{j>i} \myp{\frac{i+k_n-j}{n}+o(k_n/n)}\\
	 =& f'\myp{\frac{(2m-2)k_n}{n}}\frac{2c^3\sqrt{n}}{3} + o(\sqrt{n}).
\end{align*}
Now the analog of~\eqref{eq:key_condi_approxi} (with $r=2$) is
\begin{equation}
	\conexp{(\beta^n_m)^2}{\infor{t^n_{(2m-2)k_n}}} = \myp{f'\myp{\frac{(2m-2)k_n}{n}}\sigma^2_{f\myp{\frac{(2m-2)k_n}{n}}}\frac{2c}{3} + \frac{2\sigma^2_U}{c}}+o_p(1).
\end{equation}
\item Finally, Riemann integrability yields the analog of~\eqref{eq:Riem}:
\begin{align*}
	\Pbv(Y,2)_n\Pconverge \int_{0}^{1}\myp{f'(s)\sigma^2_{f(s)}\frac{2c}{3} + \frac{2\sigma^2_U}{c}} \diff s = \int_{0}^{1}\myp{\frac{2c}{3}\sigma^2_t + \frac{2\sigma^2_U}{c}}\diff t.
\end{align*}
The last equality is due to the change of variable $f(s) = t$.
\end{enumerate}

\setcounter{equation}{0}

\section{Proof of Theorem~\ref{thm:CLT}}\label{appendix:thm_CLT}

We will first prove three lemmas.
Then Theorem~\ref{thm:CLT} follows as a consequence.
\begin{lemma}\label{lemma:1st_approxi}
We have that
\begin{align}\label{eq:CLT1stapproxi}
&\conexp{\myp{\beta^n_m}^2}{\infor{\frac{m-1}{M_n}}} = \myp{\frac{2c}{3}\sigma^2_{\frac{m-1}{M_n}}+\frac{2}{c}\sigma^2_U} + o_p(n^{-1/4}).
\end{align}
\end{lemma}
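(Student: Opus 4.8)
The plan is to substitute $\beta^n_m = n^{1/4}\myp{\sigma_{\frac{m-1}{M_n}}\preavg{W} + \preavg{U}}$ from \eqref{eq:beta^n_m} and expand $(\beta^n_m)^2 = n^{1/2}\myp{\sigma^2_{\frac{m-1}{M_n}}(\preavg{W})^2 + 2\sigma_{\frac{m-1}{M_n}}\preavg{W}\,\preavg{U} + (\preavg{U})^2}$, then take the conditional expectation term by term, working throughout under the localization that makes $\sigma$ bounded. The cross term vanishes exactly: $\preavg{W}$ is a linear combination of Brownian increments that postdate $\tfrac{m-1}{M_n}$ and hence have conditional mean zero given $\infor{\frac{m-1}{M_n}}\vee\sigma(U)$; since $W$ is independent of $U$, the tower property gives $\conexp{\preavg{W}\,\preavg{U}}{\infor{\frac{m-1}{M_n}}}=0$.

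For the Brownian term, $\preavg{W}$ is independent of $\infor{\frac{m-1}{M_n}}$ and $\sigma_{\frac{m-1}{M_n}}$ is measurable, so it equals $n^{1/2}\sigma^2_{\frac{m-1}{M_n}}\var{\preavg{W}}$. Writing $\preavg{W}$ as a weighted sum of the i.i.d.\ $\normdist{0}{\Delta_n}$ increments with the familiar triangular weights, one gets $\var{\preavg{W}} = (k_n+1)^{-2}\Delta_n\myp{\tfrac23 k_n^3 + O(k_n^2)}$, which together with $k_n = c\sqrt n + o(n^{1/4})$ gives $\var{\preavg{W}} = \tfrac{2c}{3}n^{-1/2} + o(n^{-3/4})$; multiplying by $n^{1/2}\sigma^2_{\frac{m-1}{M_n}}$ produces $\tfrac{2c}{3}\sigma^2_{\frac{m-1}{M_n}} + o_p(n^{-1/4})$.

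The noise term carries the real content. Telescoping, $\preavg{U}$ is $(k_n+1)^{-1}$ times the difference of the sums of $k_n$ consecutive noise variables over two adjacent blocks. For the unconditional mean I would expand $\expect{(\preavg{U})^2}$ into autocovariances and sum by parts: under $v>4$ the bound \eqref{eq:rho_strong_mixing} forces $\sum_{h\ge1}h\abs{\gamma(h)}<\infty$, so the boundary corrections are $O(1)$ relative to a leading $k_n\sigma^2_U$, giving $\expect{(\preavg{U})^2} = \frac{2\sigma^2_U}{k_n} + O(k_n^{-2})$ and hence $n^{1/2}\expect{(\preavg{U})^2} = \frac{2\sigma^2_U}{c} + o(n^{-1/4})$. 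It then remains to replace the conditional second moment by this mean, i.e.\ to show that $n^{1/2}\myp{\conexp{(\preavg{U})^2}{\infor{\frac{m-1}{M_n}}} - \expect{(\preavg{U})^2}}$ is $o_p(n^{-1/4})$. Since $(\preavg{U})^2$ is a $(k_n+1)^{-2}$-weighted sum of products $U_iU_j$ whose indices all lie at or past the left endpoint of block $m$, I would bound $\norm{\conexp{U_iU_j}{\infor{\frac{m-1}{M_n}}}-\gamma(i-j)}_{L^1}$ by the mixing coefficient at the distance of $\min(i,j)$ from that endpoint, via Lemma~VIII~3.102 of \cite{jacod1987limit} and the bounded moments of $U$; discarding the $O(r_n)$ indices nearest the endpoint (bounding the truncation error by Cauchy--Schwarz) and summing the weighted mixing bounds with a suitable $r_n$ should deliver the rate.

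I expect this last step to be the main obstacle: unlike the law of large numbers in Theorem~\ref{thm:consistency}, which needs only $v>2$, here the conditioning on the information available at the start of a block interacts with the serial dependence of the noise across the block boundary, and one must tune the truncation level against the polynomial mixing decay so that every error term is genuinely $o_p(n^{-1/4})$ --- which is exactly where the strengthened assumption $v>4$ comes in. The Brownian and cross terms are routine and untouched by the noise dynamics.
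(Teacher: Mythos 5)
Your plan has the same skeleton as the paper's proof: split off the $r_n$ noise indices nearest the left endpoint of the block, replace conditional by unconditional moments on the remaining part via Lemma VIII 3.102 of \cite{jacod1987limit}, and tune $r_n\asymp n^{\vartheta}$ against the polynomial mixing decay (the paper takes $\tfrac{1}{4v}<\vartheta<\tfrac14$). Your Brownian-variance computation, the exact vanishing of the $W$--$U$ cross term, and the unconditional expansion $n^{1/2}\expect{(\preavg{U})^2}=\tfrac{2\sigma^2_U}{c}+o(n^{-1/4})$ (using $\sum_{h}h\abs{\gamma(h)}<\infty$, which is where $v>4$ first helps) are all sound, and treating $W$ and $U$ separately rather than bundled in $\beta^n_m$ is a harmless simplification.

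The gap is exactly in the step you flag as the obstacle: bounding the truncation error ``by Cauchy--Schwarz'' does not deliver the rate. Since the noise contribution carries the factor $\sqrt{n}/(k_n+1)^2\asymp n^{-1/2}$, every un-normalized error must be $o_p(n^{1/4})$. The near$\times$near products are fine ($O_p(r_n)=o_p(n^{1/4})$), but the near$\times$far cross products are not controllable by Cauchy--Schwarz: at the block level, $\abs{\conexp{S_{\rm near}S_{\rm far}}{\infor{\frac{m-1}{M_n}}}}\leq \conexp{S_{\rm near}^2}{\infor{\frac{m-1}{M_n}}}^{1/2}\conexp{S_{\rm far}^2}{\infor{\frac{m-1}{M_n}}}^{1/2}=O_p(\sqrt{r_nk_n})=O_p(\sqrt{r_n}\,n^{1/4})$, which is \emph{not} $o_p(n^{1/4})$ because the mixing step forces $r_n\to\infty$ (indeed $r_n\gtrsim n^{1/(4v)}$); term-by-term Cauchy--Schwarz is worse, of order $r_nk_n$. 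Note also that for these pairs your proposed pairwise bound --- the mixing coefficient at the distance of $\min(i,j)$ from the endpoint --- is vacuous, since that distance is at most $r_n$ and can be $O(1)$. What is needed, and what the paper establishes first, is decay of the \emph{conditional} covariance in the index gap, $\expect{\abs{\conexp{U^n_{i}U^n_{j}}{\infor{k/n}}}}\leq C(j-i)^{-v/2}$, see \eqref{eq:conditional_cov_decay_exp} (Cauchy--Schwarz enters only inside the derivation of this bound). Summing it over the near$\times$far pairs gives $\sum_{\ell\geq1}\ell\cdot\ell^{-v/2}<\infty$ --- this is precisely where $v>4$ bites for the cross term --- so that contribution is $O_p(1)$ before normalization. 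With this substituted for your Cauchy--Schwarz step, the argument closes; your pairwise-mixing treatment of the far$\times$far pairs also works, provided you check the resulting rate $k_nr_n^{1-v(1-1/q)}=o(n^{1/4})$, the analogue of the paper's requirement $\vartheta>\tfrac{1}{4v}$.
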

			
\begin{proof}
Let $r_n$ satisfy
\begin{equation}
\label{eq:r_n_Asy_cond}
r_n \asymp n^{\vartheta}, \quad \frac{1}{4v}<\vartheta<\frac{1}{4}.
\end{equation}
To simplify notation, we let $s^n_m:=(2m-2)k_n+r_n$,
and we recall our earlier notation used in the proof of Theorem~\ref{thm:consistency}:
\begin{align*}
	\overline{\beta}^{n}_{m-1,r_n} & = \frac{n^{1/4}}{k_n+1}\myp{\sum_{i=(2m-2)k_n}^{(2m-2)k_n+r_n}\sigma_{\frac{m-1}{M_n}}\myp{W^n_{i+k_n}   - W^n_{i}} +\myp{U^n_{i+k_n}  -U^n_{i} }}\\
	&=: n^{1/4}\myp{\sigma_{\frac{m-1}{M_n}} \overline{W}^{n}_{m-1,r_n} + \overline{U}^{n}_{m-1,r_n} };\\
	\overline{\beta}^{n}_{r_n,m} & = \frac{n^{1/4}}{k_n+1}\myp{\sum_{i=(2m-2)k_n+r_n+1}^{(2m-1)k_n}\sigma_{\frac{m-1}{M_n}}\myp{W^n_{i+k_n}   - W^n_{i}} +\myp{U^n_{i+k_n}  -U^n_{i} }}\\
	&=: n^{1/4}\myp{\sigma_{\frac{m-1}{M_n}} \overline{W}^{n}_{r_n,m} + \overline{U}^{n}_{r_n,m} },
\end{align*}
where $\overline{\beta}^{n}_{m-1,r_n}+\overline{\beta}^{n}_{r_n,m}=\beta^n_m $.
The proof consists of three steps:
\begin{enumerate}
\item We start by showing that
\begin{align}\label{eq:small_BLk_n=o_p(n^-1/4)}
\conexp{\myp{\beta^n_m}^2}{\infor{\frac{m-1}{M_n}}}-\conexp{\myp{\overline{\beta}^{n}_{r_n,m}}^2}{\infor{\frac{m-1}{M_n}}} = o_p(n^{-1/4}).
\end{align}
To prove~\eqref{eq:small_BLk_n=o_p(n^-1/4)}, we first prove that
\begin{equation}
\label{eq:CLT_small_blk2_op(n^-1/4)}
\conexp{\myp{\overline{\beta}^{n}_{m-1,r_n}}^2}{\infor{\frac{m-1}{M_n}}}=o_p(n^{-1/4}).
\end{equation}
%, which can be proved in the same way as we prove Proposition 4.4.
For this purpose, we show the following for any $k\leq i< j$:
\begin{equation}\label{eq:conditional_cov_decay_exp}
\expect{\abs{\conexp{U^n_{i}  U^n_{j}  }{\infor{\frac{k}{n}}}}}\leq C\myp{j-i}^{-v/2}.
\end{equation}
To see this, we apply JS-Lemma
to obtain that
\begin{equation*}
c_{ij}:=\expect{\myp{\conexp{U^n_j}{\infor{\frac{i}{n}}}}^2}\leq C\myp{j-i}^{-v}.
\end{equation*}
Then,
\begin{align*}
					\expect{\abs{\conexp{U^n_{i}U^n_{j}}{\infor{\frac{k}{n}}}}}& \leq \sqrt{C\myp{j-i}^{-v}} \expect{\abs{\conexp{U^n_i\frac{\conexp{U^n_j}{\infor{\frac{i}{n}}}}{\sqrt{c_{ij}}}  }{\infor{\frac{k}{n}}}}}.
\end{align*}
Now applying the Cauchy-Schwarz inequality
and using the fact that the variance of noise is bounded,
we obtain~\eqref{eq:conditional_cov_decay_exp}.
					%Since the unconditional covariance also decays exponentially, we see that
From~\eqref{eq:conditional_cov_decay_exp} and some simple algebra we find that
\begin{equation*}
\conexp{\myp{\sum_{i=(2m-2)k_n}^{s^n_m}   \sigma_{\frac{m-1}{M_n}}  \myp{W^n_{i+k_n} - W^n_{i}  } }^2}{\infor{\frac{m-1}{M_n}}}
\end{equation*}
is asymptotically much smaller than
\begin{align}
\conexp{\myp{\sum_{i=(2m-2)k_n}^{s^n_m}    \myp{U^n_{i+k_n} - U^n_{i}  } }^2}{\infor{\frac{m-1}{M_n}}} =O_p(r_n)=o_p(n^{1/4}),
\label{eq:CLT_small_blk2_noise_op(n^-1/4)}
\end{align}
%following the algebra in the proof of Proposition~\ref{prop:asym_var_preW_preU}. Now
whence~\eqref{eq:CLT_small_blk2_op(n^-1/4)} holds.
					
Next, we prove that
\begin{equation}\label{eq:CLT_cross_product_big&small_blk}
		\conexp{\myp{\overline{\beta}^{n}_{r_n,m}}\myp{\overline{\beta}^{n}_{m-1,r_n}}}{\infor{\frac{m-1}{M_n}}}=o_p(n^{-1/4}).
\end{equation}
				%	Due to the independence assumptions, we only need to evaluate
				%	\begin{enumerate}
				%	\item
(Note that the left-hand side of \eqref{eq:small_BLk_n=o_p(n^-1/4)}
is equal to the left-hand side of \eqref{eq:CLT_small_blk2_op(n^-1/4)}
plus twice the left-hand side of \eqref{eq:CLT_cross_product_big&small_blk}).
To show that
\begin{align*}
%\label{eq:CLTCross_small_big_blk_noise_op(n^-1/4)}
		\frac{n^{1/2}}{(k_n+1)^2}
		\conexp{
		\myp{\sum_{i=(2m-2)k_n}^{s^n_m}U^n_{i+k_n}  -U^n_{i} }
		\myp{\sum^{(2m-1)k_n}_{i=s^n_m+1}U^n_{i+k_n}  -U^n_{i}}}{\infor{\frac{m-1}{M_n}}}=o_p(n^{-1/4}),
\end{align*}
we first evaluate
\begin{align*}
	&	\frac{n^{1/2}}{(k_n+1)^2}\abs{\conexp{\myp{\sum_{i=(2m-2)k_n}^{s^n_m}U^n_{i+k_n}  }\myp{\sum^{(2m-1)k_n}_{j=s^n_m+1}U^n_{j+k_n} }   }{\infor{\frac{m-1}{M_n}}}}\\			 \leq&\frac{n^{1/2}}{(k_n+1)^2}\sum_{i=(2m-2)k_n}^{s^n_m}\sum^{(2m-1)k_n}_{j=s^n_m+1}\abs{\conexp{U^n_{i+k_n}U^n_{j+k_n}}{\infor{\frac{m-1}{M_n}}}}.
			         %\sum_{i=(2m-2)k_n}^{s^n_m}\sum^{(2m-1)k_n}_{j=s^n_m+1}c_1\delta^{i+j+2k_n-4(m-1)k_n}
					%\leq \frac{c_1\delta^{2k_n+r_n+1}}{(1-\delta)^2}.
\end{align*} %The second inequality follows from the proof of Proposition 4.5.
Now apply~\eqref{eq:conditional_cov_decay_exp} and by the fact that $v>4$, we have
\begin{align*}
\sum_{i=(2m-2)k_n}^{s^n_m}\sum^{(2m-1)k_n}_{j=s^n_m+1}\expect{\abs{\conexp{U^n_{i+k_n}U^n_{j+k_n}}{\infor{\frac{m-1}{M_n}}}}}&\overset{\eqref{eq:conditional_cov_decay_exp}}{\leq } \sum_{i=(2m-2)k_n}^{s^n_m}\sum^{(2m-1)k_n}_{j=s^n_m+1}C (j-i)^{-v/2}\\
&\leq C\sum_{\ell = 1}^{r_n}\ell^{1-\frac{v}{2}}\leq C	.
 %= O_p(1).
\end{align*}
					%the above terms are $o_p(n^{-1/4})$.
Similarly, we can prove that the other three cross products have the same order.
It is also easy to verify that
$$\frac{\sqrt{n}}{(k_n+1)^2}\expect{\sum_{i=(2m-2)k_n}^{s^n_m}(W^n_{i+k_n}-W^n_{i}) \sum^{(2m-1)k_n}_{j=s^n_m+1}(W^n_{j+k_n}-W^n_{j})} = O(r_n/\sqrt{n}).$$
Now~\eqref{eq:CLT_cross_product_big&small_blk} is proved
and consequently~\eqref{eq:small_BLk_n=o_p(n^-1/4)} follows
from~\eqref{eq:CLT_small_blk2_op(n^-1/4)} and~\eqref{eq:CLT_cross_product_big&small_blk}.
\item Next, we prove that
\begin{align}\label{eq:Diff_cond_uncond_o_p(n^-1/4)}
\conexp{\myp{\overline{\beta}^n_{r_n,m}}^2 }{\infor{\frac{m-1}{M_n}}} - \conexp{\myp{\overline{\beta}^n_{r_n,m}}^2}{\sigma_{\frac{m-1}{M_n}}} = o_p(n^{-1/4}).
\end{align}
For this purpose, we note that
\begin{align*}
&\frac{(k_n+1)^2}{\sqrt{n}} \abs{\conexp{\myp{\overline{\beta}^n_{r_n,m}}^2 }{\infor{\frac{m-1}{M_n}}} - \conexp{\myp{\overline{\beta}^n_{r_n,m}}^2}{\sigma_{\frac{m-1}{M_n}}}} \\
=&\abs{\myp{\conexp{\myp{\sum_{i=s^n_m+1}^{(2m-1)k_n} \myp{U^n_{i+k_n}  -U^n_{i} }}^2}{\infor{\frac{m-1}{M_n}}}-\expect{\myp{\sum_{i=s^n_m+1}^{(2m-1)k_n} \myp{U^n_{i+k_n}  -U^n_{i} }}^2}}}.
\end{align*}
Applying again the JS-Lemma, we find that
\begin{equation*}
\conexp{\myp{\overline{\beta}^n_{r_n,m}}^2 }{\infor{\frac{m-1}{M_n}}} - \conexp{\myp{\overline{\beta}^n_{r_n,m}}^2}{\sigma_{\frac{m-1}{M_n}}} =O_p(r_n^{-v}),
\end{equation*}
whence~\eqref{eq:Diff_cond_uncond_o_p(n^-1/4)} follows from~\eqref{eq:r_n_Asy_cond}.%exponentially by some $c\delta^{r_n}$.
					
\item Finally, we show that
\begin{align}
\label{eq:Diff_uncond_o_p(n^-1/4)}
\conexp{\myp{\overline{\beta}_{r_n,m}}^2}{\sigma_{\frac{m-1}{M_n}}} = \myp{\frac{2c}{3}\sigma^2_{\frac{m-1}{M_n}}+\frac{2}{c}\sigma^2_U} + o_p(n^{-1/4}).
\end{align}
This follows from the following equalities, which are straightforward:
\begin{align*}
\conexp{\myp{\frac{n^{1/4}}{k_n+1}\myp{\sum_{i=s^n_m+1}^{(2m-1)k_n}\sigma_{\frac{m-1}{M_n}}\myp{W^n_{i+k_n}   - W^n_{i}}}}^2}{\sigma_{\frac{m-1}{M_n}}}= \frac{2c}{3}\sigma^2_{\frac{m-1}{M_n}}+o_p(n^{-1/4}),
\end{align*}
\begin{align*}
\conexp{\myp{\frac{n^{1/4}}{k_n+1}\myp{\sum_{i=s^n_m+1}^{(2m-1)k_n}\myp{U^n_{i+k_n}   - U^n_{i}}}}^2}{\sigma_{\frac{m-1}{M_n}}}= \frac{2\sigma
^2_U}{c}+o_p(n^{-1/4}).
\end{align*}%can be obtained in a similar way.% and the unconditional version of~\eqref{eq:CLT_small_blk2_noise_op(n^-1/4)} and~\eqref{eq:CLTCross_small_big_blk_noise_op(n^-1/4)}.
\end{enumerate}
Now~\eqref{eq:CLT1stapproxi} follows from~\eqref{eq:small_BLk_n=o_p(n^-1/4)},~\eqref{eq:Diff_cond_uncond_o_p(n^-1/4)} and~\eqref{eq:Diff_uncond_o_p(n^-1/4)}, and the proof is complete.
\end{proof}			
			
\begin{lemma}
Let
\begin{align*}
L_n & :=n^{-1/4} \sum_{m=1}^{M_n}\myp{\myp{\beta^n_m}^2 -\conexp{\myp{\beta^n_m}^2}{\infor{\frac{m-1}{M_n}}}}.
\end{align*}
Then, we have the following stable convergence in law:
\begin{align}
\label{eq:CLT_stable_convergence}
L_n\LsConverge \sqrt{\frac{1}{c}}\int_0^1 \myp{\frac{2c}{3}\sigma_s^2+\frac{2\sigma^2_U}{c}}\diff W_s',
\end{align}
where $W'$ is a standard Wiener process independent of $\mathcal{F}$.	
\end{lemma}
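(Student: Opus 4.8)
The strategy is to recognize $L_n$ as a triangular array of martingale differences with respect to the filtration $(\mathcal{F}_{m/M_n})_{m\geq 0}$ and to apply a standard stable central limit theorem for such arrays (for instance Theorem IX.7.28 in Jacod--Shiryaev, or the version in Section 2.2 of \cite{jacod2011discretization}). Writing $\chi^n_m := n^{-1/4}\bigl((\beta^n_m)^2 - \conexp{(\beta^n_m)^2}{\infor{\frac{m-1}{M_n}}}\bigr)$, one has $\conexp{\chi^n_m}{\infor{\frac{m-1}{M_n}}} = 0$ by construction, so $L_n = \sum_{m=1}^{M_n}\chi^n_m$ is a sum of $\mathcal{F}$-martingale differences. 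The stable CLT then requires verifying: (i) the predictable quadratic variation $\sum_{m=1}^{M_n}\conexp{(\chi^n_m)^2}{\infor{\frac{m-1}{M_n}}}$ converges in probability to $\int_0^1 \bigl(\frac{2c}{3}\sigma^2_s + \frac{2\sigma^2_U}{c}\bigr)^2 \frac{1}{c}\,\diff s$; (ii) a conditional Lindeberg / Lyapunov condition, most conveniently $\sum_m \conexp{(\chi^n_m)^4}{\infor{\frac{m-1}{M_n}}} \Pconverge 0$; and (iii) the asymptotic $\mathcal{F}$-conditional orthogonality conditions, namely $\sum_m \conexp{\chi^n_m \, \Delta^n_m W}{\infor{\frac{m-1}{M_n}}} \Pconverge 0$ and $\sum_m \conexp{\chi^n_m \, \Delta^n_m N}{\infor{\frac{m-1}{M_n}}} \Pconverge 0$ for any bounded martingale $N$ orthogonal to $W$. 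Conditions (i)--(iii) together yield stable convergence to a process of the stated form, with $W'$ an independent Wiener process; identifying the limiting variance density as $\frac{1}{c}\bigl(\frac{2c}{3}\sigma^2_s + \frac{2\sigma^2_U}{c}\bigr)^2$ gives \eqref{eq:CLT_stable_convergence} after a standard representation argument.

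The order of work would be as follows. First I would record the decomposition $\beta^n_m = n^{1/4}(\sigma_{\frac{m-1}{M_n}}\preavg{W} + \preavg{U})$ from \eqref{eq:beta^n_m} and, using Lemma~\ref{lemma:1st_approxi}, replace $\conexp{(\beta^n_m)^2}{\infor{\frac{m-1}{M_n}}}$ by its deterministic-in-$\sigma$ proxy $\frac{2c}{3}\sigma^2_{\frac{m-1}{M_n}} + \frac{2}{c}\sigma^2_U$ up to $o_p(n^{-1/4})$, which makes the $\chi^n_m$ genuinely centered with a clean leading term. Then for (i) I would expand $(\chi^n_m)^2 = n^{-1/2}\bigl((\beta^n_m)^2 - \conexp{(\beta^n_m)^2}{\cdot}\bigr)^2$, take the conditional expectation, and show it equals $n^{-1/2}\bigl(\operatorname{Var}(\beta^n_m \mid \mathcal{F}_{\frac{m-1}{M_n}})\bigr)$-type quantities converging, after summing $M_n \asymp \sqrt{n}/(2c)$ terms and invoking a Riemann-sum argument as in Proposition 2.2.8 of \cite{jacod2011discretization}, to the claimed integral. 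The key inputs are the asymptotic (conditional) fourth-moment structure of the pre-averaged Gaussian-plus-noise variable $\beta^n_m$, obtained from Proposition~\ref{prop:pre-averaged noise}, the independence of $X$ and $U$, and the JS-Lemma (Lemma VIII 3.102 of \cite{jacod1987limit}) to control the cross-block dependence in the noise. Condition (ii) follows from the uniform moment bounds in Lemma~\ref{lemma:stochastic_Order_X_Y}: $\expect{|\beta^n_m|^q} \leq C_q$ for $q < 2r+\varepsilon$, so $\conexp{(\chi^n_m)^4}{\cdot} = O_p(n^{-1})$ uniformly in $m$, and summing $M_n = O(\sqrt{n})$ terms gives $O_p(n^{-1/2}) \to 0$.

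For the orthogonality conditions in (iii), the $W$-part is where the pre-averaging structure matters: $\chi^n_m$ is (to leading order) a quadratic functional of the Brownian increments over the block $[(2m-2)k_n, 2m k_n]$ and of the noise, so $\conexp{\chi^n_m \Delta^n_m W}{\cdot}$ involves odd moments of centered Gaussians — these vanish exactly in the proxy and are $o_p$ after summation once the approximation errors from Lemma~\ref{lemma:1st_approxi} are accounted for; the symmetry of the noise distribution (Assumption~\ref{assumption:dependent_noise}(1)) kills the remaining mixed terms. The orthogonality against martingales $N \perp W$ is handled by the independence of $U$ from $\mathcal{F}$ together with the fact that $\beta^n_m$ depends on $\mathcal{F}$ only through $W$ and $\sigma$; any such $N$ is orthogonal to the $\sigma$-measurable coefficients by the adaptedness and continuity assumptions, a now-standard argument (cf.\ \cite{podolskij2009pre-averaging-1}, \cite{jacod2009pre-averaging-2}). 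The main obstacle I anticipate is (i): establishing the precise constant in the limiting variance requires carefully computing the conditional variance of $(\beta^n_m)^2$, which mixes the $O(k_n)$ overlapping Brownian increments (contributing the $\frac{2c}{3}\sigma^2$ term with its characteristic $\frac{1}{3}$ factor from the triangular overlap kernel) with the serially dependent noise sum (contributing $\frac{2\sigma^2_U}{c}$ via the long-run variance), and keeping track of the normalization $M_n \cdot n^{-1/2} \to \frac{1}{2c}$ together with the extra factor from the square; getting every constant right, rather than the structural argument, is the delicate part. Once (i)--(iii) are verified, \eqref{eq:CLT_stable_convergence} is immediate from the cited stable CLT.
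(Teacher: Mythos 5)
Your proposal is correct and follows essentially the same route as the paper: you treat $L_n$ as a martingale-difference array and invoke Theorem IX 7.28 of Jacod--Shiryaev, verifying the same three ingredients the paper checks --- convergence of the predictable quadratic variation to $\frac{1}{c}\int_0^1\bigl(\frac{2c}{3}\sigma^2_s+\frac{2\sigma^2_U}{c}\bigr)^2\diff s$ (via the proxy from Lemma~\ref{lemma:1st_approxi} and a Riemann sum), a Lindeberg/Lyapunov condition from the uniform moment bounds, and asymptotic orthogonality to $W$ (by the symmetry of $(U,W)$) and to bounded martingales orthogonal to $W$. The only point to tighten is your condition (iii): $U$ is not independent of $\mathcal{F}$ but is part of the extended filtration, so the paper handles general $N$ by splitting into the classes $\mathcal{N}^0$ (orthogonal to $W$, treated via the martingale representation theorem conditional on $\sigma$) and $\mathcal{N}^1$ (bounded functionals of finitely many observed prices, treated by noting only finitely many blocks are affected), rather than by an independence argument.
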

					
\begin{proof}
Let $\theta^n_m  := n^{-1/4}\myp{\myp{{\beta^n_m}}^2 -\myp{\frac{2c}{3}\sigma^2_{\frac{m-1}{M_n}}+\frac{2}{c}\sigma^2_U} }.$
Then,
\begin{align*}
L_n = \sum_{m=1}^{M_n}\theta_m + o_p(1),
\end{align*}
by Lemma~\ref{lemma:1st_approxi}.
We also have
\begin{align}\label{eq:CLT_sum=P=>0}
	\sum_{m=1}^{M_n}\conexp{\theta^n_m}{\infor{\frac{m-1}{M_n}}}\Pconverge 0,
\end{align}
again by Lemma~\ref{lemma:1st_approxi} and
\begin{align*}
\sum_{m=1}^{M_n}\conexp{\myp{\theta^n_m}^2}{\infor{\frac{m-1}{M_n}}} =& \frac{1}{2cM_n}\sum_{m=1}^{M_n}    \conexp{\myp{{\beta^n_m}}^4}{\infor{\frac{m-1}{M_n}}}+\frac{1}{2cM_n}\sum_{m=1}^{M_n} \myp{\frac{2c}{3}\sigma^2_{\frac{m-1}{M_n}} +\frac{2\sigma^2_U}{c}}^2  \\
			& - \frac{1}{cM_n}\sum_{m=1}^{M_n}\conexp{\myp{{\beta^n_m}}^2}{\infor{\frac{m-1}{M_n}}}\myp{\frac{2c}{3}\sigma^2_{\frac{m-1}{M_n}} +\frac{2\sigma^2_U}{c}}.
\end{align*}
Now it follows from~\eqref{eq:conditional_PBV_op(1)} and a Riemann approximation that
\begin{align}\label{eq:CLT_sum=P=>Ct}
			\sum_{m=1}^{M_n}\conexp{\myp{\theta^n_m}^2}{\infor{\frac{m-1}{M_n}}}\Pconverge \frac{1}{c}\int_{0}^{1}\myp{\frac{2c}{3}\sigma^2_u+\frac{2\sigma^2_U}{c}}^2\diff u.
\end{align}

Next, denote $\widebar{\Delta^n_m V} =  V^n_{(2m-1)k_n} - V^n_{{2(m-1)k_n}}$, for any process $V$.			
We will show that
\begin{equation}
	\sum_{m=1}^{M_n}\conexp{\theta^n_m\widebar{\Delta^n_m N}}{\mathcal{F}^n_{2(m-1)k_n} }\Pconverge 0,
\label{eq:CLT_MGL_ortho}
\end{equation}
for any bounded martingale $N$ defined on the same probability space, where $\mathcal{F}^n_i = \mathcal{F}_{i/n}$ whence $\mathcal{F}^n_{2(m-1)k_n} = \infor{\frac{m-1}{M_n}}$.
To complete the proof, it is convenient to specify the respective probability spaces as follows.
(We can always extend the probability space --- whether the noise process and the efficient price process are defined on the same probability space or not --- see e.g., the detailed arguments in~\cite{jacod2013StatisticalPropertyMMN}.)
The efficient price process lives on $(\Omega', \mathcal{F}', (\mathcal{F}'_t)_{t\in\reals},\mathbb{P}')$.
The noise process $(U_i)_{i\in\mathbb{N}}$ is defined on $(\Omega^{''}, \mathcal{F}^{''}, (\mathcal{F}^{''}_i)_{i\in\mathbb{N}},\mathbb{P}^{''})$,
where the filtration is defined by $\mathcal{F}^{''}_i = \sigma\myp{U_j,j\leq i, j\in\mathbb{N}}$ and $\mathcal{F}^{''}=\bigvee_{i\in\mathbb{N}}\mathcal{F}^{''}_i$.
Let
\begin{equation}
\Omega=\Omega'\times\Omega^{''},\quad  \mathcal{F} = \mathcal{F}'\otimes\mathcal{F}^{''},\quad \mathbb{P}\myp{\diff \omega',\diff\omega^{''}} = \mathbb{P}'\myp{\diff \omega'}\mathbb{P}^{''}\myp{\diff \omega^{''}}.
\end{equation}
For a realization of observation times $(t^n_i)_{0\leq i\leq n}$, we introduce $\mathcal{F}^n_i = \mathcal{F}'_{t^n_i}\otimes\mathcal{F}^{''}_i$.

%With the above notations, we can go through the proofs on p.44 and p.45. In particular, $\widetilde{\mathcal{F}'_t} = \cap_{s\geq t}\mathcal{F}'_s\otimes\mathcal{F}^{''}$ still makes sense (as a \emph{continuous} filtration). $\mathcal{F}_{\frac{m-1}{M_n}}$ should be replaced by $\mathcal{F}^n_{2(m-1)k_n}$, and it is true $\mathcal{F}_{{2(m-1)k_n}/{n}}\subset \widetilde{\mathcal{F}'}_{{2(m-1)k_n}/{n}}$.

%Let $(\Omega',\mathcal{F}',(\mathcal{F}'_t), \mathbb{P}')$ and $(\Omega'',\mathcal{F''},(\mathcal{F}''_t)_t, \mathbb{P}'')$ be the stochastic basis on which the efficient price and the noise processes are defined.
%We can further restrict $\mathcal{F}'_t = \sigma\{U_s:0\leq s\leq t \}$ for $t\in[0,1]$ and set $\mathcal{F}'' = \mathcal{F}''_1.$
%An extension of the probability spaces can be obtained so that the two processes are defined in the same probability space as we specified earlier.
%\begin{align*}
%			\Omega = \Omega'\times\Omega'',\quad \mathcal{F} = \mathcal{F'}\otimes \mathcal{F''},\quad \prob{\diff \omega',\diff \omega''} = \mathbb{P}'(\diff \omega')\mathbb{P''}(\diff \omega'');\quad \mathcal{F}_t = \bigcap_{s>t}\mathcal{F}'_s\otimes\mathcal{F}''_s.
%\end{align*}
			
According to~\cite{jacod2009pre-averaging-2} and the proof of Theorem IX 7.28 of~\cite{jacod1987limit}
it suffices to consider martingales in $\mathcal{N}^0$ or $\mathcal{N}^1$, where $\mathcal{N}^0$ is the set of all bounded martingales on $(\Omega',\mathcal{F}',\mathbb{P}')$, orthogonal to $W$, and $\mathcal{N}^1$ is the set of all martingales
having a limit $N_\infty = f(Y_{t_1},\ldots,Y_{t_q})$,
where $f$ is any bounded Borel function on $\reals^q$, $t_1<\ldots<t_q$ and $q\geq 1$.
			
First, let $N\in\mathcal{N}^0$ and let $\mathcal{\widetilde{F}}'_t=\bigcap_{s>t}\mathcal{F}'_s\otimes\mathcal{F}''$.
Then, for any $t>\frac{m-1}{M_n}$, $\overline{\theta}^n_m(t):=\conexp{\theta_m^n}{\mathcal{\widetilde{F}'}_t}$,
conditional on $\sigma_{\frac{m-1}{M_n}}$,
is a martingale with respect to the filtration generated by $\{W_t-W_{\frac{m-1}{M_n}}|t>\frac{m-1}{M_n}\}$.
By the martingale representation theorem, we have $\overline{\theta}^n_m(t)=\overline{\theta}^n_m(\frac{m-1}{M_n}) + \int_{\frac{m-1}{M_n}}^{t}\gamma_u\diff W_u$ for some predictable process $\gamma$.
Now it follows from the orthogonality of $W,N$ and the martingale property of $N$ that
\begin{align*}
			\conexp{\theta^n_m\widebar{\Delta^n_mN}}{\widetilde{\mathcal{F}}'_{\frac{m-1}{M_n}}}	= \conexp{\myp{\theta^n_m - \overline{\theta}^n_m\myp{\frac{m-1}{M_n}}}\widebar{\Delta^n_mN} + \overline{\theta}^n_m\myp{\frac{m-1}{M_n}}\widebar{\Delta^n_mN}}{\widetilde{\mathcal{F}}'_{\frac{m-1}{M_n}}}=0,
			%+\conexp{}{\widetilde{\mathcal{F}}'_{\frac{m-1}{M_n}}},
\end{align*}
which leads to
\begin{align}
			\conexp{\theta^n_m\widebar{\Delta^n_mN}}{ {\mathcal{F}}^n_{2(m-1)k_n}}= 0,
\end{align}
since $\mathcal{F}_t\subset\widetilde{\mathcal{F}}'_t$.
			
Next, assume that $N\in\mathcal{N}^1$.
It can be shown (see~\cite{jacod2009pre-averaging-2}) that there exists some $\hat{f}_t$
such that $t\in[t_l,t_{l+1})$, $N_t = \hat{f}_t(Y_{t_0}, Y_{t_1},\ldots,Y_{t_{l}})$ with $t_0=0,t_{q+1}=\infty$, and such that it is measurable in $(Y_{t_1},\ldots,Y_{t_l})$.
Hence, $\widebar{\Delta^n_m N}=0$ if %the difference $\widebar{\Delta^n_m N}$
it does not cover any of the points $t_1,\ldots,t_{q+1}$.
But such intervals (to compute %the difference
$\widebar{\Delta^n_m N}$) that contain any of $t_1,\ldots,t_{q+1}$ are at most finite in number.
Furthermore, by the boundedness of $N$ and the conditional Cauchy-Schwarz inequality,
we have the following:
\begin{align*}
\conexp{\abs{\theta^n_m\widebar{\Delta^n_m N}}}{\mathcal{F}^n_{2(m-1)k_n}}\leq \sqrt{\conexp{\myp{\theta^n_m}^2}{\mathcal{F}^n_{2(m-1)k_n}}}\sqrt{\conexp{\myp{\widebar{\Delta^n_m N}}^2}{\mathcal{F}^n_{2(m-1)k_n}}}=O_p(n^{-1/4}).
\end{align*}
Now~\eqref{eq:CLT_MGL_ortho} follows since there are at most finitely many such intervals.

The following is also trivial:
\begin{align}
\conexp{\theta^n_m\widebar{\Delta^n_m W}}{\mathcal{F}^n_{2(m-1)k_n}} = 0,
\label{eq:CLT_W_ortho}
\end{align}
since $\theta^n_m$ is an even functional of $U$ and $W$ and $(U,W)$ are distributed symmetrically.

From~\eqref{eq:key_condi_approxi}, we know that $(\theta^n_m)^2\idfun{\abs{\theta^n_m}>\varepsilon}=o_p(n^{-1/2})$ for any $\varepsilon>0$.
We then have
\begin{align}
\label{eq:CLT_tight}
\sum_{m=1}^{M_n}\conexp{(\theta^n_m)^2\idfun{\abs{\theta^n_m}>\varepsilon}}{ \mathcal{F}^n_{2(m-1)k_n} }\Pconverge0.
\end{align}
Now the proof is complete in view of~\eqref{eq:CLT_sum=P=>0},~\eqref{eq:CLT_sum=P=>Ct},~\eqref{eq:CLT_MGL_ortho},~\eqref{eq:CLT_W_ortho} and~\eqref{eq:CLT_tight}, and Theorem IX 7.28 of~\cite{jacod1987limit}.
\end{proof}
\begin{lemma}
We have that
\begin{equation}
			\label{eq:third_approxi}
			\sum_{m=1}^{M_n}\myp{\preavg{Y}}^2- \frac{1}{\sqrt{n}}\sum_{m=1}^{M_n}\myp{\beta^n_m}^2 = o_p(n^{-1/4}).
\end{equation}
\end{lemma}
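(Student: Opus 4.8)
The plan is to establish \eqref{eq:third_approxi} by expanding the difference
$\sum_{m=1}^{M_n}\myp{\preavg{Y}}^2-\frac{1}{\sqrt{n}}\sum_{m=1}^{M_n}\myp{\beta^n_m}^2$
using the decomposition $n^{1/4}\preavg{Y}=\beta^n_m+\xi^n_m$ from \eqref{eq:beta^n_m}--\eqref{eq:xi^n_m}. Since $M_n\asymp\sqrt{n}$, we have $\frac{1}{\sqrt n}\sum_m(\beta^n_m)^2 = n^{-1/2}\sum_m n^{-1/2}(n^{1/4}\preavg{Y})^2$-type terms, so after writing $(n^{1/4}\preavg{Y})^2=(\beta^n_m)^2+2\beta^n_m\xi^n_m+(\xi^n_m)^2$ and dividing by $\sqrt n$, the left-hand side of \eqref{eq:third_approxi} becomes
$\frac{1}{\sqrt n}\sum_{m=1}^{M_n}\myp{2\beta^n_m\xi^n_m+(\xi^n_m)^2}$.
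So the whole task reduces to showing that this quantity is $o_p(n^{-1/4})$, i.e.\ that $n^{-1/4}\sum_{m=1}^{M_n}\myp{2\beta^n_m\xi^n_m+(\xi^n_m)^2}\Pconverge 0$.

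First I would handle the easier cross-term-free piece: $n^{-1/4}\sum_{m=1}^{M_n}(\xi^n_m)^2$. Taking expectations, $\expect{n^{-1/4}\sum_m(\xi^n_m)^2}= n^{-1/4}M_n\cdot\frac{1}{M_n}\sum_m\expect{(\xi^n_m)^2}$, and since $M_n\asymp\sqrt n$ this is $\asymp n^{1/4}\cdot\frac{1}{M_n}\sum_m\expect{(\xi^n_m)^2}$. By \eqref{eq:approxi_err_small} we know $\frac{1}{M_n}\sum_m\expect{(\xi^n_m)^2}\to 0$, but that alone only gives $o(n^{1/4})$, which is not enough — I need the \emph{rate} of decay. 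So the real work is to revisit the proof of \eqref{eq:approxi_err_small} in \cite{podolskij2009pre-averaging-1} and extract that $\frac{1}{M_n}\sum_m\expect{(\xi^n_m)^2}=O(n^{-\delta})$ for some $\delta>1/4$; this follows from the c\`adl\`ag property of $\sigma$ giving a quantitative modulus-of-continuity estimate for $\int_{t^n_i}^{t^n_{i+k_n}}(\sigma_s-\sigma_{(m-1)/M_n})\diff W_s$. Granting such a rate, Markov's inequality gives $n^{-1/4}\sum_m(\xi^n_m)^2\Pconverge 0$.

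For the cross term $n^{-1/4}\sum_{m=1}^{M_n}\beta^n_m\xi^n_m$, I would use Cauchy--Schwarz across the sum: $\abs{\sum_m\beta^n_m\xi^n_m}\le\sqrt{\sum_m(\beta^n_m)^2}\sqrt{\sum_m(\xi^n_m)^2}$. By Lemma~\ref{lemma:stochastic_Order_X_Y} and $M_n\asymp\sqrt n$, $\sum_m(\beta^n_m)^2=O_p(\sqrt n)$, so $n^{-1/4}\sqrt{\sum_m(\beta^n_m)^2}=O_p(n^{-1/4}\cdot n^{1/4})=O_p(1)$; hence it suffices that $\sqrt{\sum_m(\xi^n_m)^2}\Pconverge 0$, equivalently $\sum_m(\xi^n_m)^2\Pconverge 0$, which again follows from \eqref{eq:approxi_err_small} (here the bare statement, without rate, is enough since $M_n\expect{(\xi^n_m)^2}$-averaged $\to0$ times $n^0$). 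Alternatively, and perhaps more cleanly, I would treat $\beta^n_m\xi^n_m$ as a sum of martingale differences plus a drift, conditioning on $\mathcal{F}_{(m-1)/M_n}$: the drift part $n^{-1/4}\sum_m\conexp{\beta^n_m\xi^n_m}{\mathcal{F}_{(m-1)/M_n}}$ is controlled by conditional Cauchy--Schwarz and the rate on $\expect{(\xi^n_m)^2}$, while the martingale part has conditional variance $n^{-1/2}\sum_m\conexp{(\beta^n_m\xi^n_m)^2}{\mathcal{F}_{(m-1)/M_n}}\le n^{-1/2}\sum_m\sqrt{\conexp{(\beta^n_m)^4}{\cdot}}\sqrt{\conexp{(\xi^n_m)^4}{\cdot}}$, which is $o_p(1)$ by the boundedness of moments of $\beta^n_m$ from Lemma~\ref{lemma:stochastic_Order_X_Y} together with a fourth-moment analog of the $\xi^n_m$ rate estimate.

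The main obstacle is the first approach's dependence on an explicit convergence \emph{rate} in \eqref{eq:approxi_err_small}: the statement as quoted only asserts $\frac{1}{M_n}\sum_m\expect{(\xi^n_m)^2}\to0$, and to kill the $\xi^2$ term after multiplying by $n^{1/4}$ one genuinely needs $o(n^{-1/4})$ there. Fortunately the Cauchy--Schwarz route for the cross term avoids needing any rate, and the $\xi^2$ term can be dispatched by the same Cauchy--Schwarz trick in disguise: $n^{-1/4}\sum_m(\xi^n_m)^2 = n^{-1/4}M_n\cdot\bigl(\frac{1}{M_n}\sum_m(\xi^n_m)^2\bigr)$, and while $n^{-1/4}M_n\asymp n^{1/4}\to\infty$, one instead writes $n^{-1/4}\sum_m(\xi^n_m)^2\le \bigl(n^{-1/4}\sum_m(\xi^n_m)^2\bigr)$ and bounds it by splitting the $k_n$-block sum finely — i.e.\ one does need to go back into \cite{podolskij2009pre-averaging-1} and confirm the block-wise estimate is $O(k_n^{-1}) = O(n^{-1/2})$ per block, summed over $M_n$ blocks gives $O(M_n n^{-1/2})=O(1)$, still not enough, so the genuinely needed input is the sharper $O(n^{-1})$-per-block modulus estimate from c\`adl\`ag continuity, delivering $\frac{1}{M_n}\sum_m\expect{(\xi^n_m)^2}=O(n^{-1/2})$ and hence $n^{1/4}\cdot O(n^{-1/2})=O(n^{-1/4})\to0$. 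Pinning down exactly this rate, by combining the localization bound \eqref{eq:Classic_Est_X_p} with the c\`adl\`ag modulus of $\sigma$ and the choice $k_n=c\sqrt n$, is where the real care is required; everything else is routine manipulation using Lemma~\ref{lemma:stochastic_Order_X_Y}.
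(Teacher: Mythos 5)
Your opening reduction is correct and is essentially the paper's: writing $\widetilde{Y}^n_m:=n^{-1/4}\beta^n_m$, the left-hand side of \eqref{eq:third_approxi} equals $\sum_{m}\myp{\preavg{Y}-\widetilde{Y}^n_m}\myp{\preavg{Y}+\widetilde{Y}^n_m}=n^{-1/2}\sum_m\myp{2\beta^n_m\xi^n_m+(\xi^n_m)^2}$. The paper then applies blockwise Cauchy--Schwarz in $L^1$, uses $\sqrt{\expect{\myp{\preavg{Y}+\widetilde{Y}^n_m}^2}}=O(n^{-1/4})$ uniformly in $m$, and reduces everything to $\sum_m\sqrt{\expect{\myp{\preavg{Y}-\widetilde{Y}^n_m}^2}}\rightarrow 0$, which it imports from Lemma 7.8 of \cite{barndorff2006central} rather than re-deriving. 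Your proposal tries to re-derive the estimate, and the cross term is where it breaks. Your global Cauchy--Schwarz route needs $\sum_m(\xi^n_m)^2\Pconverge 0$, and this does \emph{not} follow from \eqref{eq:approxi_err_small}: that display controls only the average over $M_n\asymp\sqrt{n}$ blocks, so $\expect{\sum_m(\xi^n_m)^2}=M_n\cdot o(1)$ need not vanish. Generically it does not: with a bounded nonzero drift the drift component of $\xi^n_m$ is of size $n^{1/4}k_n/n\asymp n^{-1/4}$, so $\expect{(\xi^n_m)^2}\geq c' n^{-1/2}$ and $\sum_m(\xi^n_m)^2$ stays bounded away from zero in probability; your bound $n^{-1/4}\sqrt{\sum_m(\beta^n_m)^2}\sqrt{\sum_m(\xi^n_m)^2}$ is then only $O_p(1)$, and the claim that ``the bare statement, without rate, is enough'' for this term is exactly backwards. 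The fallback martingale decomposition has the same defect at its compensator: bounding $n^{-1/4}\sum_m\abs{\conexp{\beta^n_m\xi^n_m}{\mathcal{F}_{\frac{m-1}{M_n}}}}$ by conditional Cauchy--Schwarz plus $\expect{(\xi^n_m)^2}=O(n^{-1/2})$ gives $n^{-1/4}\cdot M_n\cdot O(n^{-1/4})=O(1)$, again borderline. To get below the $n^{-1/4}$ boundary you must exploit that $\beta^n_m$ is (essentially) conditionally centered and only weakly correlated with $\xi^n_m$ --- e.g.\ compute $\conexp{\beta^n_m\xi^n_m}{\mathcal{F}_{\frac{m-1}{M_n}}}$ directly via It\^o isometry for the Brownian parts and mixing bounds for the noise part --- or invoke the sharpened blockwise estimate that the paper obtains from the cited external lemma; absolute-value Cauchy--Schwarz alone cannot deliver \eqref{eq:third_approxi}.

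A secondary but genuine error is your source for the rate on $\expect{(\xi^n_m)^2}$: a c\`adl\`ag path has no modulus of continuity, quantitative or otherwise, so the ``c\`adl\`ag property of $\sigma$'' cannot yield $O(n^{-1/2})$ per block (and indeed no rate is available in the consistency setting of Theorem~\ref{thm:consistency}). The rate is available here only because \eqref{eq:third_approxi} is used in the proof of Theorem~\ref{thm:CLT}, where $\sigma$ is assumed to be a continuous It\^o semimartingale; after localization this gives $\expect{(\sigma_t-\sigma_s)^2}\leq C\abs{t-s}$, which together with \eqref{eq:Classic_Est_X_p} and $k_n\asymp c\sqrt{n}$ yields $\expect{(\xi^n_m)^2}=O(n^{-1/2})$ uniformly in $m$. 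But note this is exactly $O(n^{-1/2})$ and no better (the drift alone saturates it), which is why the rate disposes of the $(\xi^n_m)^2$ term, via $n^{-1/4}M_n\,O(n^{-1/2})=O(n^{-1/4})\rightarrow 0$, yet cannot rescue your Cauchy--Schwarz bounds for the cross term.
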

\begin{proof}
Denote
\begin{equation}
			\widetilde{Y}^{n}_m = \sigma_{\frac{m-1}{M_n}}\preavg{W}+\preavg{U}.
\end{equation}
Then,
\begin{align*}
			\expect{\abs{\sum_{m=1}^{M_n}\myp{\preavg{Y}}^2- \frac{1}{\sqrt{n}}\sum_{m=1}^{M_n}\myp{\beta^n_m}^2}}\leq \sum_{m=1}^{M_n}\sqrt{\expect{\myp{\preavg{Y}-\widetilde{Y}^n_m}^2}}\sqrt{\expect{\myp{\preavg{Y}+\widetilde{Y}^n_m}^2}}.
\end{align*}
Since $\sqrt{\expect{\myp{\preavg{Y}+\widetilde{Y}^n_m}^2}}=O(n^{-1/4})$,
the result follows if
\begin{align}
	\sum_{m=1}^{M_n}\sqrt{\expect{\myp{\preavg{Y}-\widetilde{Y}^n_m}^2}}\rightarrow 0.
\end{align}
But this follows directly from Lemma 7.8 in~\cite{barndorff2006central}.%; see also Lemma 4.6 of~\cite{Vetter2008Thesis}.
\end{proof}
		
\begin{proof}
[Proof of Theorem~\ref{thm:CLT}]
Now the proof of Theorem~\ref{thm:CLT} is complete in view of~\eqref{eq:CLT_stable_convergence} and~\eqref{eq:third_approxi},
and our consistency result in~\eqref{eq:LLN}.%, and Remark~\ref{rmk:converge_rate_HAC_estimator}.
\end{proof}

\setcounter{equation}{0}

\section{Simulation Study under Stochastic Volatility}\label{sec:SVsimu}
In this section, we provide additional simulation results
in the presence of stochastic volatility.
%We choose the parameters of interest to mimic the estimates obtained in our empirical studies.
%Furthermore,
We simulate the microstructure noise process employing various combinations of
dependence structure and sampling frequency.
%that are consistent with our empirical findings.
% We also simulate noise processes with different dependence structures

We assume that the efficient log-price is generated by the following dynamics:
\begin{equation*}
\diff X_t = -\delta (X_t-\mu_1)\diff t + \sigma_t\diff W_t,
\qquad \diff \sigma^2_t = \kappa\myp{ \mu_2- \sigma^2_t}\diff t + \gamma \sigma_{t}\diff B_t,
%\qquad {\rm \mathbf{Cor}}\myp{W,B}=\varrho.
%\label{eq:Eff_Pirce}
\end{equation*}
where $B$ is a standard Brownian motion
and its quadratic covariation with the standard Brownian motion $W$ is $\varrho t$.
We set the parameters as follows:
$\delta = 0.5$, $\mu_1 = 1.6,\kappa =5/252,\mu_ 2 = 0.04/252 ,\gamma =0.05/252$, and $\varrho = -0.5$.
We employ the same noise process as in~\eqref{eq:ARnoise}.
We set $\expect{V^2} = 1.9\times 10^{-7}$, and $\expect{\epsilon^2} = 1.3\times 10^{-7}$.
Note that these parameters are slightly different from those in Section \ref{sec:simulation},
which were based on~\cite{Ait-Sahalia2011DependentNoise}.
They are chosen to mimic the results of our empirical studies.  %We set $\rho =0.7,0,-0.7$ with different choices of $\Delta_n$ to mimic the dependence structure of microstructure noise revealed in our empirical studies.

Figure~\ref{fig:SVsimu} presents the estimates of the second moments of noise.
Clearly, the bias correction can be important,
potentially yielding significantly improved results.
Turning to the estimation of the integrated volatility
using $\widehat{\rm IV}_{\rm step1}$, $\widehat{\rm IV}_{n}$, $\widehat{\rm IV}_{\rm step2}$ and $\widehat{\rm IV}_{\rm step3}$,
we observe from Table~\ref{tab:SVsimu} similar results under stochastic volatility
as in our previous simulation studies that assumed deterministic volatility:
the two-step estimators of the integrated volatility have much smaller bias
and only slightly larger standard deviations when noise is dependent.
One more iteration of bias corrections further improves the performance
when noise is %tends to be
serially correlated.
They also deliver reliable estimates when noise turns out to be independent.
\begin{sidewaysfigure}[p]
\centering
\input{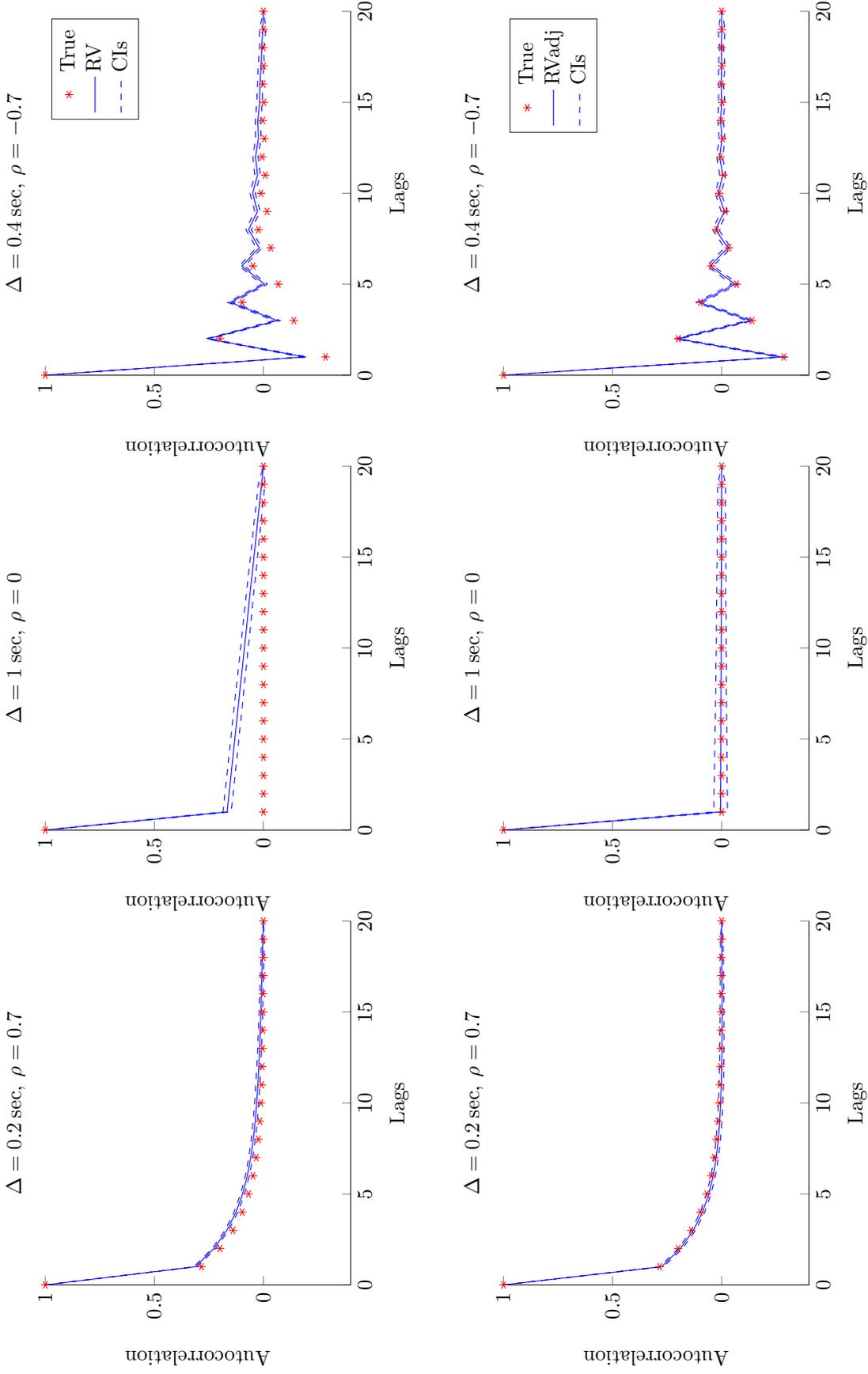}
\caption{Realized volatility (RV) and bias corrected realized volatility (RVadj) estimators
of the autocorrelations of noise in the presence of stochastic volatility
against the number of lags $j$,
averaged over $1\mathord{,}000$ simulated samples.
Top panel: RV estimators of the autocorrelations of noise (solid).
Bottom panel: RV estimators of the autocorrelations of noise with finite sample bias correction (solid).
The true autocorrelations are displayed in stars and
the 95\% simulated confidence intervals are dashed.
From the left to the right,
the three combinations of $\rho,\Delta_n$ mimic transaction time sampling, regular time sampling (at 1 sec scale), and tick time sampling.
The tuning parameters are set as follows: $j_n=20$, $i_n=10$ and $c=0.2$. }
\label{fig:SVsimu}
\end{sidewaysfigure}

\begin{table}[h]
	\centering
	\begin{tabular}{l|c|c|c}\hline\hline
		$\rho,\Delta_n$                             &	$\rho=0.7$, $\Delta_n=0.2$ sec             &	$\rho=0$, $\Delta_n=1$ sec                      & 	 $\rho=-0.7$, $\Delta_n=0.4$ sec      \\\hline
		$\widehat{\rm IV}_{\rm step1}-\int_{0}^{1}\sigma^2_t\diff t$     &5.02e-5 (1.10e-5)  &4.33e-7 (1.32e-5)   & -1.50e-5 (9.97e-6)     \\
		$\widehat{\rm IV}_{n}-\int_{0}^{1}\sigma^2_t\diff t$             &-1.64e-5 (1.09e-5)  &-7.82e-5 (1.18e-5)  &-3.17e-5 (9.77e-6)     \\
		$\widehat{\rm IV}_{\rm step2}-\int_{0}^{1}\sigma^2_t\diff t$     &4.32e-6 (1.20e-5)  &9.94e-7 (1.79e-5)  &-3.15e-6 (1.17e-5)  \\
		$\widehat{\rm IV}_{\rm step3}-\int_{0}^{1}\sigma^2_t\diff t$     &-2.32e-7 (1.21e-5)  &1.27e-6 (2.06e-5)  &-8.05e-7 (1.21e-5)  \\\hline\hline	
	\end{tabular}
\caption{Estimation of the integrated volatility in the presence of stochastic volatility
and under various combinations of noise dependence structure and sampling frequency.
We report the means of the bias of the four integrated volatility estimators:
$\widehat{\rm IV}_{\rm step1}-\int_{0}^{1}\sigma^2_t\diff t, \widehat{\rm IV}_{n}-\int_{0}^{1}\sigma^2_t\diff t$, $\widehat{\rm IV}_{\rm step2}-\int_{0}^{1}\sigma^2_t\diff t$
and $\widehat{\rm IV}_{\rm step3}-\int_{0}^{1}\sigma^2_t\diff t$,
based on $1\mathord{,}000$ simulations with standard deviations between parentheses.
From the left to the right, the three combinations of $\rho,\Delta_n$ mimic
transaction time sampling, regular time sampling (at 1 sec scale), and tick time sampling.
%The tuning parameter of the RV estimator is $j_n=20$ and $j_{n}^{*}=10$.
The tuning parameters are set as follows: $j_n=20$, $i_n=10$ and $c=0.2$.}
\label{tab:SVsimu}
\end{table}

\setcounter{equation}{0}
\setcounter{figure}{0}

\section{Empirical Study of Transaction Data for General Electric}
\label{sec:GE_Empirical}
We collect $2\mathord{,}721\mathord{,}475$ transaction prices of General Electric (GE)
over the month January 2011.
On average there are 5.8 observations per second.
In contrast to the analysis of Citigroup transaction prices
in Sections~\ref{subsec:estimate_2nd_moments_noise_Citi} and~\ref{subsec:IV_Citi},
bias correction plays a very pronounced role here.
Despite the high data frequency,
the finite sample bias can be very significant
if the underlying noise-to-signal ratio is small
(recall Remark~\ref{rmk:why_correct_bias}).
This is indeed the case as Figure~\ref{fig:GE_C_obv_n2s} reveals:
compared with Citigroup, the data frequency of the General Electric sample is typically lower
but the noise-to-signal ratio is also (much) smaller.
While the data frequency is immediately available, the noise-to-signal ratio is latent.
Therefore, one should always be wary to rely solely on asymptotic theory in practice.

The top panel of Figure~\ref{fig:GE_autocorr_IVs} shows that both the realized volatility (RV) and local averaging (LA) estimators
indicate that the noise is strongly autocorrelated,
while the bias corrected realized volatility (BCRV) estimator reveals that the noise is only weakly dependent.
Such a pattern also appears in our simulation study,
where we have seen that it is the finite sample bias that induces this discrepancy.
The bottom panel of Figure~\ref{fig:GE_autocorr_IVs} plots two estimators of the integrated volatility,
$\widehat{\rm IV}_{n}$ and $\widehat{\rm IV}_{\rm step2}$,
to illustrate that the finite sample bias correction is particularly essential.
%Again, we obtain similar findings as in our simulation study:
%ignoring the finite sample bias yields lower estimates (see $\widehat{\rm IV}_{n}$),
%while ignoring the dependence in noise yields higher estimates (see $\widehat{\rm IV}_{\rm step1}$),
%and our two-step estimator $\widehat{\rm IV}_{\rm step2}$ lies in between.
If one would solely rely on asymptotic theory,
then one would end up with much lower estimates and narrow confidence intervals that may well exclude the true values!

\begin{figure}[h]
\centering
% This file was created by matlab2tikz.
% Minimal pgfplots version: 1.3
%
%The latest updates can be retrieved from
%  http://www.mathworks.com/matlabcentral/fileexchange/22022-matlab2tikz
%where you can also make suggestions and rate matlab2tikz.
%
\begin{tikzpicture}

\begin{axis}[%
width=5in,
height=2in,
at={(0.758333in,0.48125in)},
scale only axis,
every outer x axis line/.append style={black},
every x tick label/.append style={font=\color{black}},
xmin=1,
xmax=20,
xlabel={Trading days},
every outer y axis line/.append style={black},
every y tick label/.append style={font=\color{black}},
ymin=0,
ymax=0.006,
ylabel={Noise-to-Signal Ratio},
axis x line*=bottom,
axis y line*=left,
legend style={at={(0.0,1.0)},legend cell align=left,align=left,anchor=north west,draw=white!15!black}
]
\addplot [color=blue,dashed,mark=o,mark options={solid}]
  table[row sep=crcr]{%
1	0.00248908950516677\\
2	0.0032773856242139\\
3	0.00418468575260701\\
4	0.00314866911451027\\
5	0.00262798526812869\\
6	0.00359756164280749\\
7	0.0050483204015567\\
8	0.00399327400141307\\
9	0.00343237530516356\\
10	0.00215536686392205\\
11	0.00202783589314785\\
12	0.00345773761538953\\
13	0.00290747316022016\\
14	0.00338402035716524\\
15	0.00537975308640994\\
16	0.00319439722858327\\
17	0.00330171297154659\\
18	0.00585847293709716\\
19	0.00315724274331206\\
20	0.00537786102601347\\
};
\addlegendentry{C};

\addplot [color=red,dashed,mark=o,mark options={solid}]
  table[row sep=crcr]{%
1	0.00139822600872148\\
2	0.000901099570723452\\
3	0.000790332423834797\\
4	0.00097052797397897\\
5	0.000281494787176808\\
6	0.000285376781942547\\
7	0.00062387911889265\\
8	0.000898038832699581\\
9	0.000165151202901355\\
10	0.000425541426877564\\
11	0.000926113906330832\\
12	0.000465284248332994\\
13	0.000263988947673748\\
14	0.000282755205588858\\
15	0.000613705686409644\\
16	0.000332221225044094\\
17	0.000802058406232685\\
18	0.000399774355084484\\
19	0.000115431626977943\\
20	0.000689684924578138\\
};
\addlegendentry{GE};

\addplot [color=blue,solid,mark=o,mark options={solid},forget plot]
  table[row sep=crcr]{%
1	0.00248908950516677\\
2	0.0032773856242139\\
3	0.00418468575260701\\
4	0.00314866911451027\\
5	0.00262798526812869\\
6	0.00359756164280749\\
7	0.0050483204015567\\
8	0.00399327400141307\\
9	0.00343237530516356\\
10	0.00215536686392205\\
11	0.00202783589314785\\
12	0.00345773761538953\\
13	0.00290747316022016\\
14	0.00338402035716524\\
15	0.00537975308640994\\
16	0.00319439722858327\\
17	0.00330171297154659\\
18	0.00585847293709716\\
19	0.00315724274331206\\
20	0.00537786102601347\\
};
\addplot [color=red,solid,mark=o,mark options={solid},forget plot]
  table[row sep=crcr]{%
1	0.00139822600872148\\
2	0.000901099570723452\\
3	0.000790332423834797\\
4	0.00097052797397897\\
5	0.000281494787176808\\
6	0.000285376781942547\\
7	0.00062387911889265\\
8	0.000898038832699581\\
9	0.000165151202901355\\
10	0.000425541426877564\\
11	0.000926113906330832\\
12	0.000465284248332994\\
13	0.000263988947673748\\
14	0.000282755205588858\\
15	0.000613705686409644\\
16	0.000332221225044094\\
17	0.000802058406232685\\
18	0.000399774355084484\\
19	0.000115431626977943\\
20	0.000689684924578138\\
};
\addplot [color=blue,solid,mark=o,mark options={solid},forget plot]
  table[row sep=crcr]{%
1	0.00248908950516677\\
2	0.0032773856242139\\
3	0.00418468575260701\\
4	0.00314866911451027\\
5	0.00262798526812869\\
6	0.00359756164280749\\
7	0.0050483204015567\\
8	0.00399327400141307\\
9	0.00343237530516356\\
10	0.00215536686392205\\
11	0.00202783589314785\\
12	0.00345773761538953\\
13	0.00290747316022016\\
14	0.00338402035716524\\
15	0.00537975308640994\\
16	0.00319439722858327\\
17	0.00330171297154659\\
18	0.00585847293709716\\
19	0.00315724274331206\\
20	0.00537786102601347\\
};
\addplot [color=red,solid,mark=o,mark options={solid},forget plot]
  table[row sep=crcr]{%
1	0.00139822600872148\\
2	0.000901099570723452\\
3	0.000790332423834797\\
4	0.00097052797397897\\
5	0.000281494787176808\\
6	0.000285376781942547\\
7	0.00062387911889265\\
8	0.000898038832699581\\
9	0.000165151202901355\\
10	0.000425541426877564\\
11	0.000926113906330832\\
12	0.000465284248332994\\
13	0.000263988947673748\\
14	0.000282755205588858\\
15	0.000613705686409644\\
16	0.000332221225044094\\
17	0.000802058406232685\\
18	0.000399774355084484\\
19	0.000115431626977943\\
20	0.000689684924578138\\
};
\end{axis}

\begin{axis}[%
width=5in,
height=2in,
at={(0.758333in,3.5in)},
scale only axis,
every outer x axis line/.append style={black},
every x tick label/.append style={font=\color{black}},
xmin=1,
xmax=20,
xlabel={Trading days},
every outer y axis line/.append style={black},
every y tick label/.append style={font=\color{black}},
ymin=50000,
ymax=550000,
ylabel={Number of Observations},
axis x line*=bottom,
axis y line*=left,
legend style={at={(0.0,1.0)},legend cell align=left,align=left,anchor=north west,draw=white!15!black}
]
\addplot [color=blue,dashed,mark=asterisk,mark options={solid}]
  table[row sep=crcr]{%
1	273557\\
2	245194\\
3	244846\\
4	266477\\
5	279287\\
6	176689\\
7	138993\\
8	257731\\
9	228877\\
10	308362\\
11	533860\\
12	342645\\
13	274582\\
14	250505\\
15	207478\\
16	180283\\
17	186416\\
18	149881\\
19	244727\\
20	142669\\
};
\addlegendentry{C};

\addplot [color=red,dashed,mark=asterisk,mark options={solid}]
  table[row sep=crcr]{%
1	96369\\
2	127409\\
3	109119\\
4	90890\\
5	100177\\
6	122286\\
7	90394\\
8	96403\\
9	81959\\
10	89181\\
11	116668\\
12	154569\\
13	135325\\
14	342971\\
15	185937\\
16	192489\\
17	133464\\
18	129608\\
19	221653\\
20	104604\\
};
\addlegendentry{GE};

\addplot [color=blue,dashed,mark=asterisk,mark options={solid},forget plot]
  table[row sep=crcr]{%
1	273557\\
2	245194\\
3	244846\\
4	266477\\
5	279287\\
6	176689\\
7	138993\\
8	257731\\
9	228877\\
10	308362\\
11	533860\\
12	342645\\
13	274582\\
14	250505\\
15	207478\\
16	180283\\
17	186416\\
18	149881\\
19	244727\\
20	142669\\
};
\addplot [color=red,dashed,mark=asterisk,mark options={solid},forget plot]
  table[row sep=crcr]{%
1	96369\\
2	127409\\
3	109119\\
4	90890\\
5	100177\\
6	122286\\
7	90394\\
8	96403\\
9	81959\\
10	89181\\
11	116668\\
12	154569\\
13	135325\\
14	342971\\
15	185937\\
16	192489\\
17	133464\\
18	129608\\
19	221653\\
20	104604\\
};
\addplot [color=blue,dashed,mark=asterisk,mark options={solid},forget plot]
  table[row sep=crcr]{%
1	273557\\
2	245194\\
3	244846\\
4	266477\\
5	279287\\
6	176689\\
7	138993\\
8	257731\\
9	228877\\
10	308362\\
11	533860\\
12	342645\\
13	274582\\
14	250505\\
15	207478\\
16	180283\\
17	186416\\
18	149881\\
19	244727\\
20	142669\\
};
\addplot [color=red,dashed,mark=asterisk,mark options={solid},forget plot]
  table[row sep=crcr]{%
1	96369\\
2	127409\\
3	109119\\
4	90890\\
5	100177\\
6	122286\\
7	90394\\
8	96403\\
9	81959\\
10	89181\\
11	116668\\
12	154569\\
13	135325\\
14	342971\\
15	185937\\
16	192489\\
17	133464\\
18	129608\\
19	221653\\
20	104604\\
};
\end{axis}
\end{tikzpicture}%
\caption{Number of daily observations of transaction prices (top panel)
and noise-to-signal ratio (bottom panel) for Citigroup (C) and General Electric (GE).
Sample period: January, 2011, consisting of 20 trading days.
In the bottom panel, the noise-to-signal ratio, $\frac{\sigma^2_U}{\int_{0}^{1}\sigma_s^2\diff s}$,
is estimated by $\frac{\hat{\sigma}^2_{U,{\rm step2}}}{\widehat{{\rm IV}}_{\rm step2}}$,
where $\hat{\sigma}^2_{U,{\rm step2}}$ and $\widehat{{\rm IV}}_{\rm step2}$ are defined in \eqref{eq:2ndStepSigma2U} and~\eqref{eq:2ndStepIV},
respectively.
We set $j_n=30$, $i_n = 15$ and $c=0.2$.%The tuning parameter of the RV estimator is $j_n=30$ and $j_{n}^{*}=15$.
}
\label{fig:GE_C_obv_n2s}
\end{figure}
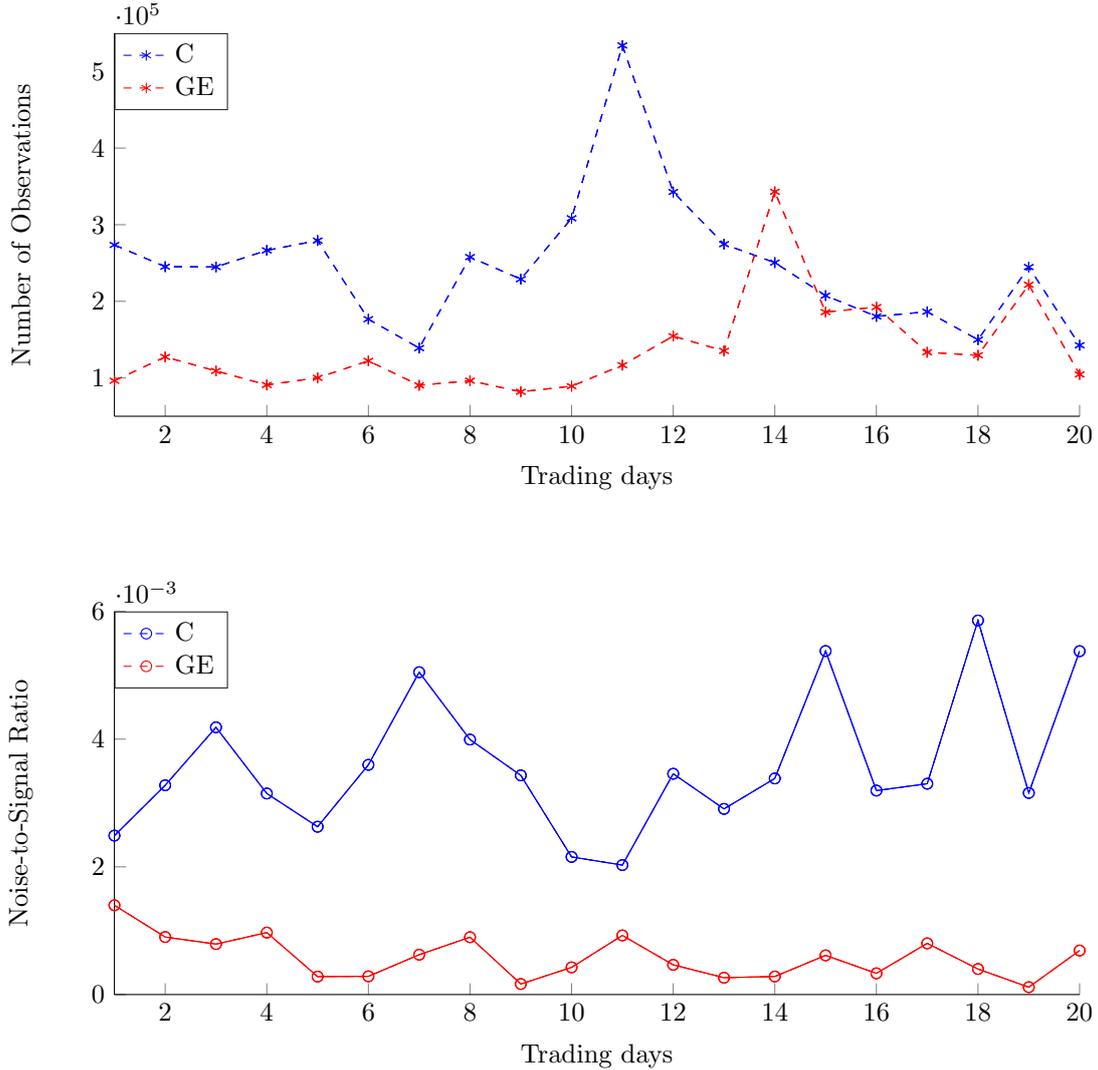

\begin{sidewaysfigure}[h]
\centering
\input{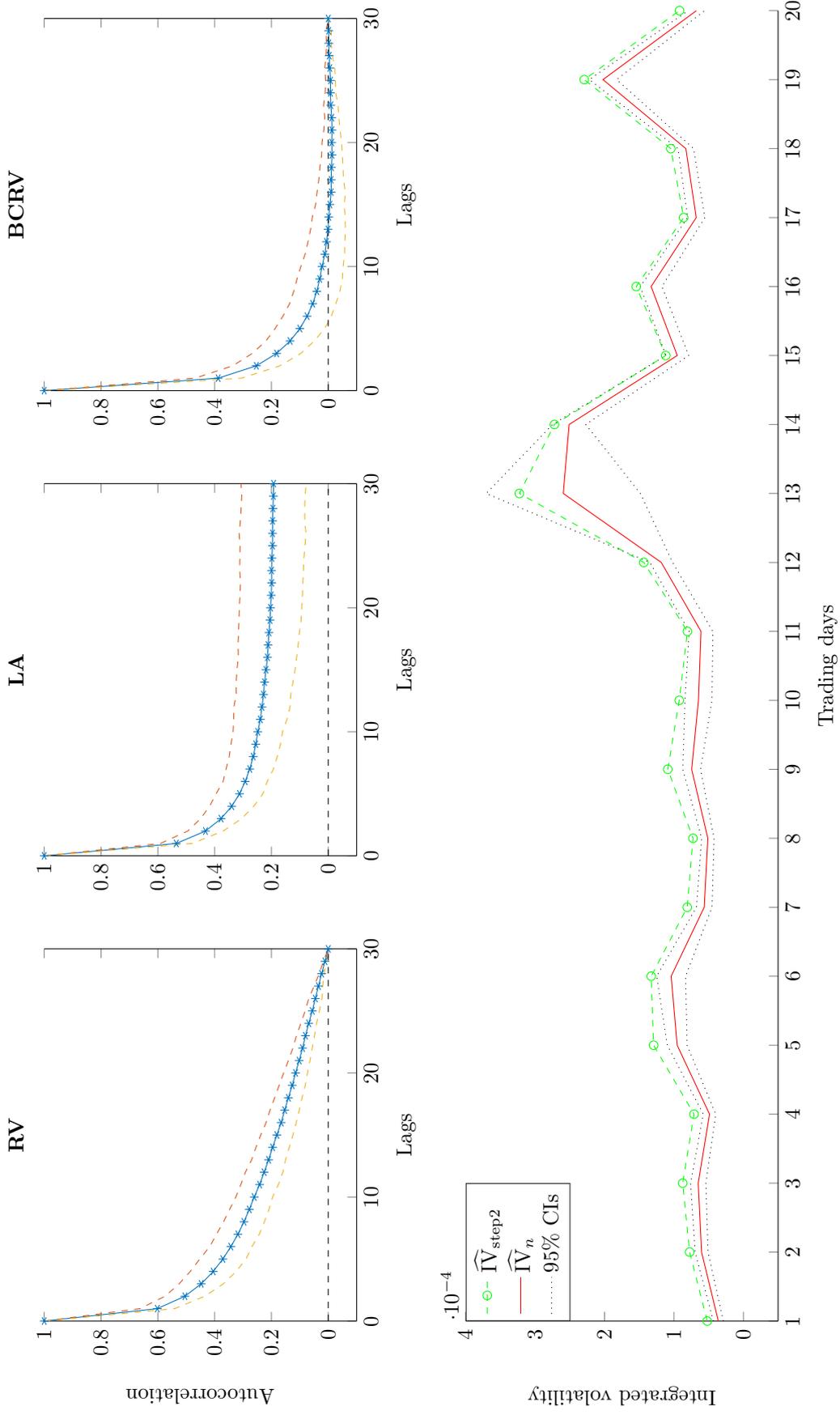}
\caption{Autocorrelations of noise and integrated volatility based on transaction data for General Electric (GE).
Sample period: January, 2011, consisting of 20 trading days.
On average there are 5.8 observations per second in the sample.
Top panel: From the left to the right, we display the realized volatility (RV), local averaging (LA),
and the bias corrected realized volatility (BCRV) estimators of the autocorrelations of noise against the number of lags $j$.
The three estimators are applied to and next averaged over each of the 20 trading days.
The stars indicate the means of the 20 estimates.
The dashed lines are 2 standard deviations away from the mean.
Bottom panel: Estimation of the integrated volatility.
The estimators %$\widehat{\rm IV}_{\rm step1}$,
$\widehat{\rm IV}_{\rm step2}$ and $\widehat{\rm IV}_{n}$
are given by%~\eqref{eq:1stStepIV},
~\eqref{eq:2ndStepIV} and~\eqref{eq:consistency_SV_nonpar}.
The asymptotic confidence intervals (CIs) are based on the limit distribution in Theorem~\ref{thm:CLT}.
%The tuning parameter of the RV estimator is $j_{n}=30$ in both panels and $j_{n}^{*}=15$ in the bottom panel.
We set $j_n=30$, $i_n = 15$ and $c=0.2$.}
\label{fig:GE_autocorr_IVs}
\end{sidewaysfigure}
\end{appendices}

\end{document}